\theoremstyle{plain}
\newtheorem{theorem}{Theorem}[section]
\newtheorem{lemma}[theorem]{Lemma}
\newtheorem{lemdef}[theorem]{Lemma--Definition}
\newtheorem{proposition}[theorem]{Proposition}
\newtheorem{corollary}[theorem]{Corollary}
\newtheorem{conj}[theorem]{Conjecture}
\theoremstyle{definition}
\newtheorem{definition}[theorem]{Definition}
\newtheorem{fact}[theorem]{Fact}
\newtheorem{remark}[theorem]{Remark}
\newtheorem*{notation}{Notation}
\newtheorem*{ack}{Acknowledgements}
\newtheorem{example}[theorem]{Example}
\title{Explicit root numbers of abelian varieties}
\author{Matthew Bisatt}
\address{Howard House, University of Bristol, Bristol, BS8 1SD, UK}
\email{matthew.bisatt@bristol.ac.uk}
\date{\today}
\begin{document}
\global\long\def\fp{\mathbb{F}_p}
\def\Q{\mathbb{Q}}
\global\long\def\qp{\mathbb{Q}_p}
\global\long\def\R{\mathbb{R}}
\global\long\def\C{\mathbb{C}}
\global\long\def\Z{\mathbb{Z}}
\global\long\def\N{\mathbb{N}}
\global\long\def\FF{\mathcal{F}}
\global\long\def\KK{\mathcal{K}}
\global\long\def\Kx{K^{\times}}
\global\long\def\ss{\mathfrak{s}}
\global\long\def\xcyc{\chi_{cyc}}
\global\long\def\sign{\operatorname{sign}}
\global\long\def\rk{\operatorname{rk}}
\global\long\def\Gal{\operatorname{Gal}}
\global\long\def\Jac{\operatorname{Jac}}
\global\long\def\sp{\operatorname{sp}}
\global\long\def\Hom{\operatorname{Hom}}
\global\long\def\Ind{\operatorname{Ind}}
\global\long\def\Res{\operatorname{Res}}
\global\long\def\Inf{\operatorname{Inf}}
\global\long\def\Stab{\operatorname{Stab}}
\global\long\def\ord{\operatorname{ord}}
\global\long\def\ann{\operatorname{Ann}}
\global\long\def\Tr{\operatorname{Tr}}
\global\long\def\Frobp{\operatorname{Frob}_p}
\global\long\def\Frob{\operatorname{Frob}}
\global\long\def\GL{\operatorname{GL}}
\global\long\def\Aut{\operatorname{Aut}}
\global\long\def\End{\operatorname{End}}
\global\long\def\Id{\operatorname{Id}}

\begin{abstract}
	The Birch and Swinnerton-Dyer conjecture predicts that the parity of the algebraic rank of an abelian variety over a global field should be controlled by the expected sign of the functional equation of its $L$-function, known as the global root number. In this paper, we give explicit formulae for the local root numbers as a product of Jacobi symbols. This enables one to compute the global root number, generalising work of Rohrlich who studies the case of elliptic curves. We provide similar formulae for the root numbers after twisting the abelian variety by a self-dual Artin representation. As an application, we find a rational genus two hyperelliptic curve with a simple Jacobian whose root number is invariant under quadratic twist.
\end{abstract}

\maketitle
\tableofcontents


\section{Introduction}
The primary arithmetic invariant of an abelian variety $A$ defined over a global field $\KK$ is the rank of its Mordell--Weil group; the Birch and Swinnerton-Dyer conjecture predicts that this should be equal to order of vanishing of the (conjectural) $L$-function attached to $A$ at $s=1$. The associated functional equation of the $L$-function gives rise to the parity conjecture which connects the rank to the global root number $W(A/\KK)=\pm 1$, the expected sign of the functional equation.

\begin{conj}[Parity Conjecture]
	$(-1)^{\rk A/\KK}=W(A/\KK).$
\end{conj}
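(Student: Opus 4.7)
The plan is to avoid the $L$-function entirely, since its analytic continuation and functional equation are themselves open for general $A$, and to argue instead on the Galois side via the $p$-parity conjecture. Fix an auxiliary prime $p$ and let $r_p$ denote the corank of the $p^\infty$-Selmer group of $A/\KK$. Conjecturally $r_p = \rk A/\KK$, the obstruction being precisely the $p$-part of the Tate--Shafarevich group. My plan is therefore to establish the $p$-parity statement
\[
(-1)^{r_p} = W(A/\KK)
\]
and then invoke finiteness of Sha to recover the conjecture as stated.

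For the $p$-parity step I would follow the regulator-constant approach developed by the Dokchitser brothers for elliptic curves. Choose a solvable Galois extension $F/\KK$ and a self-dual rational virtual $\Gal(F/\KK)$-representation $\tau$ of dimension zero. By Brauer's induction theorem $\tau$ is a $\Z$-linear combination of characters induced from one-dimensional characters of subgroups, so Shapiro's lemma rewrites the pairing $\langle \tau, X_p(A/F)\rangle$ as a signed sum of Selmer coranks over intermediate fields. The regulator constant $\mathcal{C}_\tau(A/F)$ then provides a parity congruence relating this signed sum to a product of local invariants of N\'eron component groups. In parallel, the global root number factors over places of $\KK$, each local factor given by the explicit Jacobi-symbol formulae established in the body of the paper. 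The heart of the proof is then a place-by-place comparison of the two products: at good-reduction places both contribute trivially; at bad-reduction places the Tamagawa-type contribution to the regulator constant must match the local $\varepsilon$-factor.

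The principal obstacle is twofold. First, producing $F$ and $\tau$ whose regulator constants have, at every ramified place, exactly the sign dictated by the local root number formula is a delicate combinatorial problem: for elliptic curves it follows from a case-by-case analysis of Kodaira types, and for some semistable abelian surfaces an analogous analysis goes through, but for a general polarised abelian variety over an arbitrary number field no such choice of pair $(F,\tau)$ is currently known. Second, even granted $p$-parity, passing from $r_p$ to $\rk A/\KK$ requires finiteness of the $p$-primary Tate--Shafarevich group, which is itself wide open and appears unavoidable within this framework. For these reasons a genuinely unconditional proof for arbitrary $A/\KK$ lies beyond present techniques, and any argument along the above lines must be regarded as conditional on the finiteness of Sha.
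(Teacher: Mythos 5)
The statement you were asked to prove is Conjecture 1.1 of the paper (the Parity Conjecture), and the paper offers no proof of it: it is stated as an open conjecture, with the authors noting only that it has been established for elliptic curves over number fields under a finiteness hypothesis on the Tate--Shafarevich group (Dokchitser--Dokchitser, \cite{DD11}), precisely by combining explicit local root number formulae of the kind this paper generalises with $p$-parity and regulator-constant techniques. So there is no ``paper proof'' to compare yours against, and no complete proof is currently known for general abelian varieties.

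Your proposal is an accurate description of the only known line of attack --- reduce to $p$-parity of Selmer coranks, prove that via regulator constants and a place-by-place comparison with local $\varepsilon$-factors, then pass to the Mordell--Weil rank assuming finiteness of the $p$-primary part of Sha --- and you are right to flag both obstacles: the choice of a pair $(F,\tau)$ whose regulator constants match the local root numbers at every bad place is not available beyond low-dimensional or semistable cases, and the dependence on finiteness of Sha is unavoidable in this framework. To be clear, though, what you have written is a research programme, not a proof; it establishes nothing unconditionally, and even the conditional statement would require carrying out the local comparison in full, which is exactly where the explicit formulae of this paper (Theorems \ref{rootno} and \ref{twistthm}) are meant to be used as input. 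Your honest assessment of the status of the problem is correct.
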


As the global root number is defined independently of the $L$-function, the parity conjecture may be stated without the assumption that the ``$L$-function" attached to $A/\KK$ is indeed an $L$-function; i.e. that the Hasse--Weil conjecture holds. We note that the parity conjecture has been proved in the case of elliptic curves over number fields under some finiteness assumption on the Tate--Shafarevich group in \cite{DD11} by utilising the explicit formulae for root numbers.

Our main objective is to give explicit formulae for the global root number of abelian varieties. Building on the work of Sabitova \cite{Sab07}, we also give explicit formulae for the root number of an abelian variety twisted by a self-dual representation. 

Our reason for considering the twisted root number is due to a generalisation of the parity conjecture. For a finite Galois extension $\FF/\KK$, let $A(\FF)_{\C}$ denote the $\Gal(\FF/\KK)$-representation on $A(\FF) \otimes_{\Z} \C$ and let $\tau$ be an Artin representation of $\Gal(\FF/\KK)$ with real trace. As a consequence of the Birch and Swinnerton-Dyer conjecture for Artin twists \cite[p.127]{Roh90}, we get the following conjecture for the twisted root number.

\begin{conj}[Twisted Parity Conjecture]
	$(-1)^{\langle A(\FF)_{\C}, \tau \rangle} = W(A/\KK,\tau).$
\end{conj}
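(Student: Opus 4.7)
The plan is to derive the Twisted Parity Conjecture from the Birch--Swinnerton-Dyer conjecture for Artin twists, together with the (conjectural) functional equation of the twisted Hasse--Weil $L$-function $L(A/\KK,\tau,s)$. Since $\tau$ has real trace it is self-dual ($\tau \cong \tau^{*}$), so the functional equation should be a self-relation
\[
\Lambda(A/\KK,\tau,s) = W(A/\KK,\tau) \cdot \Lambda(A/\KK,\tau,2-s),
\]
centred at $s=1$, where the sign is by definition the twisted global root number.

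First I would invoke BSD for Artin twists \cite{Roh90}, which predicts that $\ord_{s=1} L(A/\KK,\tau,s) = \langle A(\FF)_{\C}, \tau \rangle$. Second, I would use the elementary observation that, for a self-dual functional equation centred at $s=1$ with sign $W = \pm 1$, differentiating $k$ times and evaluating at $s=1$ gives $\Lambda^{(k)}(1) = (-1)^{k} W \cdot \Lambda^{(k)}(1)$, which forces $\Lambda^{(k)}(1) = 0$ whenever $(-1)^{k} \neq W$. Hence
\[
(-1)^{\ord_{s=1} \Lambda(A/\KK,\tau,s)} = W(A/\KK,\tau),
\]
and combining this with the predicted order of vanishing yields the claimed identity.

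The principal obstacle is that this is not a proof in the usual sense: both inputs -- BSD for Artin twists, and even the analytic continuation and functional equation of $L(A/\KK,\tau,s)$ that are needed just to make sense of the order of vanishing at $s=1$ -- remain open in general. The conjectural status is already flagged in the excerpt, and the purpose of the paper is not to establish the statement but to make the right-hand side $W(A/\KK,\tau)$ explicitly computable as a product of Jacobi symbols, so that the conjecture can be tested unconditionally on the root-number side and applied, as in the genus two example mentioned in the abstract, to produce concrete arithmetic consequences.
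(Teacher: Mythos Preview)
Your proposal is correct and matches the paper's own treatment: the statement is presented as a \emph{conjecture}, not a theorem, and the paper merely remarks that it follows from the Birch--Swinnerton-Dyer conjecture for Artin twists \cite[p.127]{Roh90} (together with the expected functional equation), exactly as you outline. Your explicit acknowledgement that both inputs are conjectural, and that the paper's aim is to compute $W(A/\KK,\tau)$ rather than to establish the equality, is spot on.
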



As an example we use our formulae to compute the global root number of the Jacobian of a hyperelliptic curve in \S \ref{example}, using results of \cite{DDMM}. In \S \ref{quad}, we give an application of our results by deriving sufficient criteria for abelian varieties with the property that the parity of their rank is invariant under quadratic twist (assuming the parity conjecture); a table of hyperelliptic curves whose Jacobians have this property is given in Appendix \ref{lawfultable}. In the setting of elliptic curves, Mazur and Rubin conjecture \cite[Conjecture 1.3]{MR10} that this is one of only two phenomena\footnote{The other being the existence of rational $2$-torsion points.} that prevent an elliptic curve from having quadratic twists of any given $2$-Selmer rank. In particular, this would imply that every elliptic curve over a number field $\KK$ has a quadratic twist of rank at most $2$; we might expect a similar statement for abelian varieties.

This work follows a recent pattern of extending explicit arithmetic of elliptic curves to abelian varieties. Other such work includes the work of Booker, Sijsling, Sutherland, Voight and Yasaki \cite{BSSVY16} who have computed various invariants of genus two hyperelliptic curves as part of the LMFDB collaboration \cite{LMFDB}; van Bommel's algorithms \cite{vB17} to numerically verify the full Birch and Swinnerton-Dyer conjecture for Jacobians of hyperelliptic curves; and the upcoming work of Dokchitser, Dokchitser, Maistret and Morgan \cite{DDMM} who study the arithmetic of hyperelliptic curves. Moreover, we can recover the root number computations done in the recent paper of Brumer, Kramer and Sabitova \cite{BKS18} (cf. Remark \ref{BKS}).

For reference, we give Rohrlich's result for root numbers of elliptic curves in terms of Jacobi symbols. This is the result we will generalise to abelian varieties.

\begin{theorem}\cite[Theorem 2]{Roh96}
\label{Rohr}
	Let $K$ be a finite extension of $\qp$ with normalised valuation $v$ and residue field of cardinality $q$. Let $E/K$ be an elliptic curve and let $j,\Delta$ denote the $j$-invariant and discriminant of $E$ respectively.
\begin{enumerate}
\item If $v(j)<0$ and $p \geqslant 3$, then 

$W(E/K)=\begin{cases}
-1 &\text{ if $E/K$ has split multiplicative reduction}; \\
\quad 1 &\text{ if $E/K$ has nonsplit multiplicative reduction}; \\
\left( \dfrac{-1}{q} \right) &\text{ otherwise.} \end{cases}$
\item If $v(j)\geqslant 0$ and $p \geqslant 5$, set $e=\dfrac{12}{\gcd(v(\Delta),12)}$. Then

$W(E/K)=\begin{cases}
\quad 1 &\text{ if } e=1; \\
\left( \dfrac{-1}{q} \right) &\text{ if } e=2 \quad \text{ or } e=6; \\[10pt]
\left( \dfrac{-3}{q} \right) &\text{ if } e=3; \\[10pt]
\left( \dfrac{-2}{q} \right) &\text{ if } e=4. \\[10pt]
\end{cases}$
\end{enumerate}
\end{theorem}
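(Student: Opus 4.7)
The plan is to identify $W(E/K)$ with the local epsilon factor $\epsilon(\sigma_E,\psi)$ attached to the Weil--Deligne representation $\sigma_E$ on the dual of the rational Tate module $V_\ell E$, and then compute this factor case by case according to reduction type.

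For part (i), the hypothesis $v(j)<0$ forces $E$ to have potentially multiplicative reduction. When the reduction is itself multiplicative, $\sigma_E$ is the Steinberg Weil--Deligne representation (twisted by the unramified quadratic character in the nonsplit case), and a direct computation of its epsilon factor gives $W=-1$ in the split case and $W=+1$ in the nonsplit case, the sign flip coming from the unramified quadratic character evaluated at Frobenius. When the reduction is additive but $v(j)<0$, $E$ becomes multiplicative over a ramified quadratic extension $L/K$, so $\sigma_E\cong \mathrm{St}\otimes\chi$ for the ramified quadratic character $\chi$ cutting out $L$. The standard tame Gauss sum evaluation (valid for $p\geqslant 3$) gives $W(\mathrm{St}\otimes\chi)=\chi(-1)=\left(\frac{-1}{q}\right)$.

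For part (ii), with $p\geqslant 5$ and $v(j)\geqslant 0$, $E$ has potentially good reduction, so $\sigma_E=(\rho,0)$ with $\rho$ a $2$-dimensional Galois representation whose inertia image is cyclic of order $e$. By Kodaira's classification of reduction types (equivalently the Ogg--Saito formula), this $e$ equals $12/\gcd(v(\Delta),12)$, giving exactly the five listed cases. For $e=1$ the representation is unramified, so $W=+1$. For $e=2$, $\rho$ is a ramified quadratic twist of an unramified representation, and the same Gauss sum as above yields $\left(\frac{-1}{q}\right)$. For $e\in\{3,4,6\}$ the representation $\rho$ is irreducible and realised as $\Ind_{K_2/K}\chi$ for an appropriate character $\chi$ of the Weil group of the unramified quadratic extension $K_2/K$; the inductivity of epsilon factors in degree zero gives $W(\rho)=\lambda(K_2/K,\psi)\cdot W(\chi)$, and the resulting tame Gauss sums evaluate to $\left(\frac{-3}{q}\right)$, $\left(\frac{-2}{q}\right)$ and $\left(\frac{-1}{q}\right)$ respectively.

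The main obstacle I anticipate is pinning down the precise character $\chi$ in each of the $e=3,4,6$ cases. The Kodaira type (or equivalently $v(\Delta)\bmod 12$) identifies $e$, but to fix the correct Gauss sum one must determine $\chi$ up to unramified twist by analysing the action of tame inertia on $V_\ell E$ and matching this with Serre's classification of the image of inertia in $\GL_2(\Z_\ell)$. Once $\chi$ is pinned down, the rest is careful sign bookkeeping in the tame Gauss sum, for which the assumption $p\geqslant 5$ ensures that $e$ is coprime to $p$ and that tame ramification theory applies uniformly.
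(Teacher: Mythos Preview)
This theorem is stated in the paper as a citation from Rohrlich \cite{Roh96}; the paper does not give an independent proof. What the paper does do is recover it as a special case of its main result, Theorem~\ref{rootno}: in the Remark following that theorem it observes that for an elliptic curve with potentially good reduction one has $\rho_T=0$ and the two eigenvalues of $\rho_E(\iota)$ have order $e=12/\gcd(v(\Delta),12)$, so the general formula $W(A/K)=\prod_e W_{q,e}^{m_e}$ reduces to a single $W_{q,e}$, and one checks that $W_{q,3}=(q/3)=(-3/q)$, $W_{q,4}=(-2/q)$, $W_{q,6}=(-1/q)$. The potentially multiplicative case is handled via the identification of $\rho_T$ with the quadratic character of the extension over which split multiplicative reduction is attained.

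Your approach is essentially Rohrlich's original direct computation rather than the paper's ``prove the general abelian-variety formula and specialise'' route. Both are valid; yours is more elementary and self-contained for elliptic curves, while the paper's is designed to exhibit the elliptic-curve case as one instance of a uniform pattern.

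One inaccuracy to flag: you assert that for $e\in\{3,4,6\}$ the representation $\rho$ is irreducible and induced from the unramified quadratic extension $K_2$. This holds only when $q\not\equiv 1\bmod e$; when $q\equiv 1\bmod e$ the inertia eigenvalues already lie in $K$ and $\rho$ splits as $\chi\oplus\chi^*$ for a tamely ramified character $\chi$ of order $e$. You would then compute $W(\chi\oplus\chi^*)=\chi(-1)$ directly (Corollary~\ref{dualrep}) rather than via inductivity. The final Jacobi symbol comes out the same in both subcases, but your outline as written only covers one of them. This is exactly the dual-pair versus self-dual dichotomy that the paper's general machinery (Theorems~\ref{dual} and~\ref{selfdual}) is set up to handle uniformly.
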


These formulae have since been used to address several questions including the density of rational points on elliptic surfaces \cite{BDD16,Des16,VA11}, the behaviour of root numbers in families of elliptic curves \cite{Hel04} and to prove the $2$-parity conjecture \cite{DD11}. Our generalisation of Rohrlich's formulae sets the stage for answering these questions in higher dimensions.

\subsection{Notation}
		
We now set up some notation that we frequently use throughout the paper. For definitions of the root number of a representation, reduction types of an abelian variety and $\rho_A$ see \S \ref{background}.\\

\noindent \begin{tabular}{l p{0.8\linewidth}}
$K, K^{\rm{sep}}$ &  non-Archimedean local field with separable closure $K^{\rm{sep}}$, \\
$\mathcal{O}_K, \pi_K$ & ring of integers of $K$ with uniformiser $\pi_K$, \\
$v:K^{\times} \twoheadrightarrow \Z$ & normalised valuation of $K$, \\
$p>0$ & residue characteristic of $K$, \\
$q$ & cardinality of the residue field of $K$, \\
$I$ & the absolute inertia group of $K$, \\
$\Frob$ & an arithmetic Frobenius element of $K$, \\
$\mathcal{W}(K^{\rm{sep}}/K)$ & $\cong I \rtimes \langle \Frob \rangle,$ the absolute Weil group of $K$, \\
$\xcyc$ & the cyclotomic character, i.e. the unique unramified character of  $\mathcal{W}(K^{\rm{sep}}/K)$ such that $\xcyc(\Frob)=q$, \\
$A/K$ & an abelian variety, \\
$\rho_A$ & $\cong \rho_B \oplus (\rho_T \otimes \xcyc^{-1} \otimes \sp(2))$ the complexification of the canonical $\ell$-adic representation of $A/K$, where $\rho_B,\rho_T$ have finite image of inertia (cf. Fact \ref{decomp}), \\
$W(A/K)$ & $=W(\rho_A)$ the local root number of $A/K$. \\
\end{tabular}

\vspace{10pt}
	Unless otherwise specified, we shall suppose throughout that $\rho_A$ is tamely ramified, i.e. the image of wild inertia is trivial; this is always true if $p>2\dim A+1$ \cite[p.497]{ST68}. We therefore introduce some further notation for this setting. \\

\noindent \begin{tabular}{l p{0.8\linewidth}}
$\iota \in I$ & any lift of a topological generator of the tame inertia group, \\
$\tilde{\varphi}(e)$ & $=\begin{cases}
		 2 &\text{ if } e=1,2, \\
		 |\left(\Z/e\Z\right)^{\times}| &\text{ if } e \geqslant 3,
	\end{cases}$ \\
$m_e \in \Z$ & $= |\{ \text{eigenvalues of } \rho_B(\iota) \text{ of order } e \}| / \tilde{\varphi}(e)$ (counting multiplicity) for each positive integer $e$, \\		
$m_T$ & multiplicity of $-1$ as an eigenvalue of $\rho_T(\iota)$, \\
$\left( \dfrac{\ast}{\ast}\right)$ & the Jacobi symbol, \\
$\zeta_n$ & a primitive $n^{\rm{th}}$ root of unity, \\
$\rho_{e,f}$ & the direct sum of all faithful irreducible $f$-dimensional representations of $\Gal(K(\zeta_e,\pi_K^{1/e})/K)$, where $f=[K(\zeta_e):K]$, \\
$\langle \, , \, \rangle$ & the standard inner product of characters of representations, embedding into $\C$ if necessary, \\
$\rho^*$ & the contragredient or dual of a representation $\rho$, \\
$\tau_v$ & a complex Artin representation of $\Gal(K^{\rm{sep}}/K)$ which is self-dual, i.e. $\tau_v^* \cong \tau_v$, \\
$W(A/K,\tau_v)$ & $=W(\rho_A \otimes \tau_v)$ the twisted (local) root number.
\end{tabular}

\begin{remark}
	Note that the characteristic polynomial of $\rho_B(\iota)$ is defined over $\Z$ since its roots are integral and independent of $\ell\neq p$ \cite[Th\'{e}or\`{e}me 4.3b]{Gro72}. This implies that it is a product of cyclotomic polynomials and hence $m_e \in \Z$ for $e\geqslant 3$. Moreover, since $\rho_B(\iota)$ has trivial determinant and even dimension, the same holds for $m_1$ and $m_2$; see Remark \ref{eulertweak} for an explanation of why $\tilde{\varphi}(e)$ is not the usual Euler totient function in these cases.
\end{remark}

\subsection{Statement of results}	
Here we detail the main results of the paper. We concern ourselves with the root numbers of both the abelian variety $A/K$ and its twist by $\tau_v$. We are fortunate to once again be able to write the local root number compactly as a product of Jacobi symbols. We further consider the representation $\rho_T$ as a (self-dual) complex Artin representation and hence will be semisimple.

\begin{theorem}[=Theorem \ref{rootno}]
	Let $A/K$ be an abelian variety over a non-Archimedean local field which has tame reduction. Then $$W(A/K)=\left(\prod\limits_{e \in \N} W_{q,e}^{m_e} \right) (-1)^{\langle \mathds{1}, \rho_T \rangle} W_{q,2}^{m_T},$$ where:
		
	$W_{q,e} = \begin{cases}
		\left( \dfrac{q}{l} \right) &\text{ if } e=l^k \quad \text{ for some odd prime $l$ and integer $k\geqslant 1$}; \\[10pt]	
		\left( \dfrac{-1}{q} \right) &\text{ if } e=2l^k \quad \text{ for some prime } l \equiv 3 \bmod{4}, k\geqslant 1,  \qquad \text{ or } e=2; \\[10pt]	
		\left( \dfrac{-2}{q} \right) &\text{ if } e=4; \\[10pt]	
		\left( \dfrac{2}{q} \right) &\text{ if } e=2^k \quad \text{ for some integer } k \geqslant 3; \\[10pt]	
		\quad 1 &\text{ else.}
	\end{cases}$
\end{theorem}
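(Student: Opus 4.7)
The plan is to exploit multiplicativity of local root numbers together with the decomposition $\rho_A \cong \rho_B \oplus (\rho_T \otimes \xcyc^{-1} \otimes \sp(2))$, yielding
$$W(A/K) = W(\rho_B) \cdot W(\rho_T \otimes \xcyc^{-1} \otimes \sp(2));$$
the two factors on the right will correspond respectively to the product over $e$ and to the final two factors in the statement.

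For $W(\rho_B)$, I would exploit that $\rho_B$ is a tame, self-dual Weil representation with trivial determinant. Its restriction to inertia is diagonalisable, and every irreducible tame summand of $\rho_B$ is, up to unramified twist, one of the representations $\rho_{e,f}$ with $f=[K(\zeta_e):K]$. Self-duality and the determinant condition force the unramified-twist data to cancel in pairs, so that
$$W(\rho_B) = \prod_{e\in\N} W(\rho_{e,f})^{m_e}.$$
Each $W(\rho_{e,f})$ is then computed by inductivity in degree zero of $\epsilon$-factors: $\rho_{e,f}$ is induced from a tame character $\chi_e$ of order $e$ on the unramified extension of $K$ of degree $f$, and $W(\chi_e)$ is a classical tame Gauss sum. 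The case analysis for $e$ (odd prime power; $e=4$; $e=2^k$, $k\geqslant 3$; $e=2l^k$ with $l\equiv 3 \pmod 4$; otherwise), together with quadratic reciprocity, then produces the Jacobi symbols $W_{q,e}$. The conventions $\tilde{\varphi}(1)=\tilde{\varphi}(2)=2$ reflect the fact that genuinely unramified characters (eigenvalue $1$ of $\iota$) have trivial root number and that among the eigenvalue $-1$ summands of $\rho_B(\iota)$, only the ramified quadratic characters of $K$ contribute nontrivially.

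The Steinberg factor is handled via the standard identity, going back to Rohrlich \cite{Roh96} and Sabitova \cite{Sab07}, expressing $W(\rho \otimes \xcyc^{-1} \otimes \sp(2))$ for a tame Artin $\rho$ in terms of $W(\rho)$, $\det\rho(\Frob)$ and $\dim\rho^I$. Applied to $\rho = \rho_T$: the trivial-representation summands contribute $(-1)^{\langle \mathds{1},\rho_T\rangle}$, reproducing the split multiplicative case of Rohrlich's theorem; the eigenvalue $-1$ summands of $\rho_T(\iota)$ give $m_T$ copies of a ramified quadratic character, contributing $W_{q,2}^{m_T}$; and all other summands of $\rho_T$ appear in self-dual conjugate pairs whose root numbers cancel.

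The main technical obstacle will be the explicit Gauss-sum calculation of $W(\rho_{e,f})$ in each of the five congruence classes of $e$. While inductivity reduces the computation to a tamely ramified character on an unramified extension, extracting the correct sign of the resulting Gauss sum and repackaging it as a Jacobi symbol over the base field requires careful bookkeeping—especially when $2 \mid e$ and one must track the unique ramified quadratic subcharacter of $\chi_e$, and when Frobenius-conjugate characters combine with unramified twists to leave exactly the stated $W_{q,e}^{m_e}$.
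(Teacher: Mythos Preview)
Your overall architecture matches the paper's: factor $W(A/K)=W(\rho_B)\cdot W(\rho_T\otimes\xcyc^{-1}\otimes\sp(2))$ and treat the two pieces separately. The toric piece is essentially right, though the identity you invoke is $W(\rho_T\otimes\sp(2))=(-1)^{\langle\mathds{1},\rho_T\rangle}(\det\rho_T)(-1)$ (Lemma~\ref{mult}); the factor $W_{q,2}^{m_T}$ arises from computing $(\det\rho_T)(-1)$ via the ramified quadratic summands (Lemma~\ref{multleg}), not from root numbers of those characters or from $\det\rho_T(\Frob)$.

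The substantive gap is the claimed reduction $W(\rho_B)=\prod_e W(\rho_{e,f})^{m_e}$. In the paper's notation (Definition~\ref{rhoef}), $\rho_{e,f}$ is the sum of the \emph{orthogonal} faithful irreducibles of $\Gal(K(\zeta_e,\pi_K^{1/e})/K)$, whereas the self-dual irreducible summands that can occur inside the symplectic representation $\rho_B\otimes\xcyc^{1/2}$ are unramified twists that may be \emph{symplectic}, and for these $W(\theta_{\mathrm{sympl}})=-W(\theta_{\mathrm{orth}})$ (compare the two cases of Theorem~\ref{selfdual}, or Lemma~\ref{FSlemma}). Your assertion that ``self-duality and the determinant condition force the unramified-twist data to cancel in pairs'' holds only for the non-self-dual summands $\sigma\oplus\sigma^*$; for the self-dual ones the unramified twist records exactly the orthogonal/symplectic dichotomy and does not cancel. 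A concrete failure: for $e=l^k$ with $l$ an odd prime and $q$ a non-square modulo~$l$, the symplectic packet $\rho_e$ has an odd number of self-dual summands each with root number $-1$, so $W(\rho_e)=-1=\left(\tfrac{q}{l}\right)$, while every summand of $\rho_{e,f}$ is orthogonal with root number $+1$, giving $W(\rho_{e,f})=+1$. The paper repairs this by working with genuinely symplectic packets $\rho_e$ (Lemma--Definition~\ref{rhoe}), showing via Lemma~\ref{semisymp} and Theorem~\ref{rhoesym} that one may replace each packet by a symplectic version without altering the total root number, and then computing $W(\rho_e)$ through the Fr\"ohlich--Queyrut theorem (Lemmas~\ref{FQ},~\ref{FQu}) rather than a direct Gauss-sum evaluation. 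A Gauss-sum route could substitute for Fr\"ohlich--Queyrut at the final step, but it does not let you bypass the symplectic bookkeeping that identifies which twists actually appear.
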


\begin{remark}
	When $A=E$ is an elliptic curve, we recover the result of Rohrlich (Theorem \ref{Rohr}). Indeed, if $E/K$ has potentially good reduction $(v(j)\geqslant 0)$ then $\rho_T=0$ and the two eigenvalues of $\rho_E(\iota)$ have order $e=\frac{12}{\gcd(v(\Delta_E),12)}$ (see \cite[Proposition 2v]{Roh93}) and our corresponding Jacobi symbols then match. Now suppose that $E/K$ has potentially multiplicative reduction, so $\rho_B=0$ and $v(j)<0$. Then using the theory of Tate curves, one can show that $\rho_T=\chi$ is the character corresponding to the minimal extension where $E/K$ attains split multiplicative reduction. More explicitly, $\chi$ is unramified if and only if $E/K$ has multiplicative reduction and $\chi=\mathds{1}$ if it is split over $K$ \cite[Proposition 3]{Roh93}; this then coincides with our result.
\end{remark}	
	
We shall now give the result for the twisted root numbers. We briefly mention that we identify the determinant character $\det \tau_v$ with a character of $K^{\times}$ under the Artin map.

\begin{theorem}[=Theorem \ref{twistthm}]
	Let $A/K$ be an abelian variety over a non-Archimedean local field which has tame reduction and let $\tau_v$ be a self-dual Artin representation of $\Gal(K^{\rm{sep}}/K)$. Then $$W(A/K, \tau_v)=W(A/K)^{\dim \tau_v}((\det \tau_v)(-1))^{\dim A}(-1)^{l_1+l_2},$$ where $$l_1=\langle \rho_T,\tau_v \rangle + \dim \tau_v \langle \mathds{1}, \rho_T \rangle,$$ $$l_2= \sum\limits_{e \in \N} m_e \left( \langle \rho_{e,f},\tau_v \rangle + \dfrac{\tilde{\varphi}(e)}{[K(\zeta_e):K]}(\langle \mathds{1},\tau_v \rangle + \langle \eta, \tau_v \rangle + \dim \tau_v)\right),$$ with $\eta$ the unramified quadratic character of $K$.
\end{theorem}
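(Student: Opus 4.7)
The plan is to decompose $\rho_A$ into its tame irreducible constituents, apply multiplicativity of the local root number under direct sums, and invoke explicit twisting formulas for each constituent (extending the techniques of Sabitova \cite{Sab07}), then reassemble using Theorem \ref{rootno} for the untwisted factor.

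Starting from $\rho_A = \rho_B \oplus (\rho_T \otimes \xcyc^{-1} \otimes \sp(2))$, I would first factor
\[
W(A/K,\tau_v) = W(\rho_B \otimes \tau_v) \cdot W((\rho_T \otimes \xcyc^{-1} \otimes \sp(2)) \otimes \tau_v).
\]
Under tame reduction, $\rho_B$ is determined by the characteristic polynomial of $\rho_B(\iota)$ together with the Frobenius action, and its symplectic self-duality forces a decomposition $\rho_B \cong \bigoplus_e \rho_{e,f}^{\oplus m_e}$ (possibly twisted by the unramified quadratic character $\eta$ on some components), where each $\rho_{e,f}$ breaks into $\tilde\varphi(e)/[K(\zeta_e):K]$ irreducible summands of dimension $[K(\zeta_e):K]$.

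For each self-dual irreducible tame constituent $\sigma$ of $\rho_B$, I would apply Sabitova's twisting identity to express the ratio $W(\sigma \otimes \tau_v)/W(\sigma)^{\dim \tau_v}$ as an appropriate power of $(\det \tau_v)(-1)$ together with a sign of the form $(-1)^{\langle \sigma, \tau_v \rangle + \kappa}$, where $\kappa = \langle \mathds{1}, \tau_v \rangle + \langle \eta, \tau_v \rangle + \dim \tau_v$ is universal across the irreducible constituents of a given $\rho_{e,f}$. Collecting multiplicatively over all such $\sigma$, each appearing with multiplicity $m_e$ in $\rho_B$, the inner-product pieces assemble by additivity of $\langle \cdot, \tau_v \rangle$ into $m_e \langle \rho_{e,f}, \tau_v \rangle$, while the universal constant $\kappa$ is repeated $m_e \cdot \tilde\varphi(e)/[K(\zeta_e):K]$ times, reproducing precisely the $l_2$ sum.

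The toric factor $\rho_T \otimes \xcyc^{-1} \otimes \sp(2)$ admits a parallel analysis via the twisting formula for twisted Steinberg representations: each self-dual irreducible constituent $\chi$ of $\rho_T$ contributes a power of $(\det \tau_v)(-1)$ together with a sign $(-1)^{\langle \chi, \tau_v \rangle + \dim \tau_v \langle \mathds{1}, \chi \rangle}$, summing over $\chi \subset \rho_T$ to give $l_1$. The total exponent of $(\det \tau_v)(-1)$ across both the abelian and toric contributions accumulates to $\dim A$ (half of $\dim \rho_A = 2\dim A$, with the halving dictated by self-duality of each constituent), and the $W(\sigma)^{\dim \tau_v}$ and $W(\chi \xcyc^{-1} \sp(2))^{\dim \tau_v}$ factors multiply to $W(A/K)^{\dim \tau_v}$ by Theorem \ref{rootno}. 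The main obstacle is the bookkeeping across the degenerate cases $e = 1, 2$, where $\tilde\varphi(e) = 2$ rather than $\varphi(e) = 1$ reflects the symplectic constraint forcing eigenvalues $\pm 1$ of $\rho_B(\iota)$ to occur in pairs (and where the unramified twist by $\eta$ can enter), together with correctly treating the Weil--Deligne monodromy of $\sp(2)$ in the tensor product $\sp(2) \otimes \tau_v$ so that the Steinberg twisting formula yields the stated form of $l_1$.
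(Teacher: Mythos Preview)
Your strategy matches the paper's: factor $\rho_A$ into the abelian and toric parts, apply Sabitova's twisting identity to the irreducible constituents, and reassemble. The toric computation is fine. But there is a genuine gap in your treatment of $\rho_B$.

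You assert that symplectic self-duality forces $\rho_B \cong \bigoplus_e \rho_{e,f}^{\oplus m_e}$ up to unramified quadratic twist. This is false: the $\rho_{e,f}$ are \emph{orthogonal} Artin representations (Definition~\ref{rhoef}), whereas the irreducible constituents of $\rho_B\otimes\xcyc^{1/2}$ are either symplectic self-dual or occur in non-self-dual dual pairs $\sigma\oplus\sigma^*$ (Lemma~\ref{semisymp}). These are genuinely different representations, not just quadratic twists of one another in general. Relatedly, Sabitova's formula (Proposition~\ref{newSab}) for a symplectic irreducible $\theta$ does \emph{not} produce the sign $(-1)^{\langle\theta,\tau_v\rangle}$ you write, but rather $(-1)^{\langle\hat\theta,\tau_v\rangle}$, where $\hat\theta$ is the orthogonal companion obtained by twisting the inducing character. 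The passage from $\hat\theta$ to $\rho_{e,f}$ is the content of a separate lemma (Lemma~\ref{rhoeftau} in the paper): one must check that summing $\langle\hat\theta,\tau_v\rangle$ over all irreducible summands agrees with $\sum_e m_e\langle\rho_{e,f},\tau_v\rangle$ \emph{modulo~2}, and this works precisely because the non-symplectic summands of $\rho_B$ come in dual pairs, each contributing an even amount to both sides.

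So the missing ingredients are: (a) the distinction between the symplectic summands of $\rho_B$ and the orthogonal $\rho_{e,f}$; (b) the $\hat{\ }$ operation in Sabitova's formula; and (c) the mod-2 bridge lemma connecting $\langle\hat\rho_B,\tau_v\rangle$ to $\sum_e m_e\langle\rho_{e,f},\tau_v\rangle$, together with the separate verification (via Lemma~\ref{twistlem}(ii)) that dual pairs contribute trivially to the sign. Once these are in place your outline goes through exactly as in the paper.
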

	
\begin{remark}
\label{infty}
	For completeness, we give the result for the Archimedean places (see \cite[Lemma 2.1]{Sab07}) since we shall use it in the global case. Let $F=\R$ or $\C$ and let $\tau_v$ be an Artin representation of $\Gal(F^{\rm{sep}}/F)$. Then $$W(A/F, \tau_v)= (-1)^{(\dim A)(\dim \tau_v)}.$$
\end{remark}
	
We have also collated this into a formula for the global twisted root number. Let $\KK$ be a global field, and for each place $v$, we extend it to $\KK^{\rm{sep}}$ and take $\tau_v$ to be the restriction of a self-dual Artin representation $\tau$ of $\Gal(\KK^{\rm{sep}}/\KK)$ to the decompostion group at $v$.
	
Moreover, we define $m_{e,v}$ to be equal to $m_e$ for the abelian variety $A/\KK_v$ and note that $\rho_{e,f}$ depends on $v$.
	
\begin{theorem}[=Theorem \ref{globtwist}]
	Let $\KK$ be a global field, $A/\KK$ an abelian variety and $\tau$ a finite dimensional Artin representation with real character. Let $M_{\KK}$ be the set of places of $\KK$. For each finite place $v \in M_{\KK}$, write $\rho_{A/\KK_v}=\rho_{B_v} \oplus (\rho_{T_v} \otimes \xcyc^{-1} \otimes \sp(2))$ where $\rho_{B_v},\rho_{T_v}$ have finite image of inertia. If $\tau_v$ is ramified, assume ${A/\KK_v}$ has tame reduction. Then
	\begin{equation*}
		W(A/\KK, \tau) = W(A/\KK)^{\dim \tau} (\sign (\det \tau))^{\dim A} \cdot T \cdot S,
	\end{equation*}
	where
	\begin{eqnarray*}
	\sign(\det \tau) &=& \prod\limits_{v| \infty, \, v \in M_{\KK}} (\det \tau_v)(-1), \\
	T &=& \prod\limits_{v<\infty, v \in M_{\KK}}(-1)^{\langle \rho_{T_v},\tau_v \rangle + \dim \tau \langle \mathds{1}, \rho_{T_v} \rangle}, \\
	S &=& \prod\limits_{v<\infty, v \in M_{\KK}} \prod\limits_{e \in \N} \left( (-1)^{\langle \rho_{e,f}, \tau_v \rangle + \frac{\tilde{\varphi}(e)}{[\KK_v(\zeta_e):\KK_v]}(\langle \mathds{1},\tau_v \rangle + \langle \eta_v, \tau_v \rangle + \dim \tau)}\right)^{m_{e,v}},
	\end{eqnarray*}
	and $\eta_v$ is the unramified quadratic character of $\KK_v^{\times}$.
\end{theorem}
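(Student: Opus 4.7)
The starting point is the factorisation of the global twisted root number as a product of local ones,
$$W(A/\KK,\tau) = \prod_{v \in M_{\KK}} W(A/\KK_v,\tau_v),$$
after which it suffices to apply the local formulae at each place and recombine. At each archimedean $v$, Remark \ref{infty} gives $W(A/\KK_v,\tau_v) = (-1)^{\dim A \cdot \dim \tau}$. At each finite $v$ where $A$ has tame reduction, Theorem \ref{twistthm} applies and yields
$$W(A/\KK_v,\tau_v) = W(A/\KK_v)^{\dim \tau}((\det \tau_v)(-1))^{\dim A}(-1)^{l_{1,v}+l_{2,v}}.$$
At a finite $v$ with wild reduction the standing hypothesis forces $\tau_v$ to be unramified; I would decompose such $\tau_v$ as a direct sum of unramified characters $\chi_i$ and invoke the standard identity $W(\rho \otimes \chi_i) = W(\rho)\chi_i(\pi_v)^{a(\rho)}$ for unramified twists, then verify that the resulting expression is again of the shape produced by Theorem \ref{twistthm} — the inner products $\langle \rho_{T_v},\tau_v\rangle$ and $\langle \rho_{e,f},\tau_v\rangle$ reduce to their unramified pieces and the formula degenerates as expected.

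To reassemble the product, specialising Remark \ref{infty} to $\tau_v = \mathds{1}$ gives $W(A/\KK_v) = (-1)^{\dim A}$ at archimedean $v$, so the archimedean local factors $(-1)^{\dim A\dim \tau}$ coincide with the archimedean contributions $W(A/\KK_v)^{\dim \tau}$ of $W(A/\KK)^{\dim \tau}$. Consequently the factors $W(A/\KK_v)^{\dim \tau}$ over all places assemble into $W(A/\KK)^{\dim \tau}$, and the products over finite places of $(-1)^{l_{1,v}}$ and $(-1)^{l_{2,v}}$ form $T$ and $S$ respectively.

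The remaining determinant contribution is treated via global class field theory. Since $\det\tau$ corresponds to an idele class character, the product formula applied to $-1 \in \KK^{\times}$ gives $\prod_{v \in M_{\KK}} (\det \tau_v)(-1) = 1$, and therefore
$$\prod_{v<\infty}((\det \tau_v)(-1))^{\dim A} = \prod_{v|\infty}((\det \tau_v)(-1))^{\dim A} = (\sign(\det \tau))^{\dim A}.$$
Multiplying everything delivers the claimed formula. The most delicate point is the wild-reduction-with-unramified-twist case, where Theorem \ref{twistthm} does not apply directly and one must verify the formula separately using the behaviour of root numbers under unramified twists; the remainder is careful bookkeeping of local contributions together with a single application of global reciprocity.
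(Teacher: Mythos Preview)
Your proposal is correct and follows essentially the same route as the paper: the paper's own proof is the single sentence ``This is a straightforward consequence of Theorem~\ref{twistthm}'' together with the preceding paragraph observing that $\prod_{v<\infty}(\det\tau_v)(-1)=\prod_{v\mid\infty}(\det\tau_v)(-1)$ via the adelic interpretation of $\det\tau$. Your write-up is in fact more scrupulous than the paper's---you explicitly match the archimedean factors against $W(A/\KK_v)^{\dim\tau}$ and flag the wild-reduction places (where $\tau_v$ is forced to be unramified) as requiring a separate check via the unramified-twist formula, a point the paper's one-line proof passes over in silence.
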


As an application of these results, we are also able to provide sufficient criteria for an abelian variety over a global field whose global root number is invariant under quadratic twist. In particular if we can find such an abelian variety with odd rank, then all of its quadratic twists should have infinitely many rational points by the parity conjecture. \\

\noindent \textbf{Criterion A.}
	Let $A/K$ be an abelian variety over a non-Archimedean local field. Suppose that $A/K$ has tame reduction and that $p$ is odd or $\rho_T$ is zero.

	Let $$W_g=\prod\limits_{2 \nmid e} W_{q,e}^{m_e} \prod\limits_{e=4 \, or \, 2||e} W_{q,e/2}^{m_e},$$ where $2||e$ means that $v_2(e)=1$, i.e. $e \equiv 2 \bmod{4}$.
	
	If $p$ is odd, we moreover let $\eta$ be the unramified quadratic character of $K$ and let $\chi_1, \chi_2$ be the ramified quadratic characters of $K$. We define $$\langle \rho_{T}, \mathds{1} \rangle = n_1, \qquad \langle \rho_{T}, \eta \rangle = n_2, \qquad \langle \rho_{T}, \chi_1 \rangle = n_3, \qquad \langle \rho_{T}, \chi_2 \rangle = n_4.$$
	
	Then $A/K$ satisfies Criterion A if any of the following conditions hold: 
	\begin{enumerate}
	\item $p=2$ and $W_g=1$;
	\item $p$ is odd, $n_1 \equiv n_2 \mod{2},$ $n_3 \equiv n_4 \mod{2},$ and $W_g=(-1)^{n_1+n_3}$.
	\end{enumerate}

\begin{theorem}[=Lemma \ref{inflaw} and Theorem \ref{lawful}]
	Let $A/\KK$ be an abelian variety over a global field. Then the global root number of \emph{every} quadratic twist of $A/\KK$ is equal to $W(A/\KK)$ if both of the following criteria are satisfied:
	\begin{enumerate}
	 \item $\dim A$ is even or $\KK$ has no real places;
	 \item for every finite place $v$, $A/\KK_v$ satisfies Criterion A.
	\end{enumerate}
\end{theorem}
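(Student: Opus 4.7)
My plan is to identify $W(A^\chi/\KK) = W(A/\KK,\chi)$ via $\rho_{A^\chi} = \rho_A \otimes \chi$ and then apply the previously established formulas. Remark \ref{infty} shows that $W(A/\KK_v,\chi_v) = (-1)^{\dim A}$ independently of $\chi_v$ at each Archimedean place, so those places contribute $1$ to the ratio $W(A^\chi/\KK)/W(A/\KK)$. Inserting the local formula of Theorem \ref{twistthm} at each finite place and using the Hecke product formula $\prod_v \chi_v(-1) = 1$ to identify $\prod_{v<\infty}\chi_v(-1) = \sign\chi$, I would obtain
\begin{equation*}
\frac{W(A^\chi/\KK)}{W(A/\KK)} = (\sign\chi)^{\dim A}\prod_{v<\infty}(-1)^{l_{1,v}(\chi_v) + l_{2,v}(\chi_v)}.
\end{equation*}

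The first factor is trivial under condition (i), which will presumably be the content of Lemma \ref{inflaw}: if $\dim A$ is even the exponent kills any sign, while if $\KK$ has no real places $\sign\chi$ is an empty product.

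For the second factor I would show triviality for every quadratic $\chi$ under condition (ii), giving Theorem \ref{lawful}. Since quadratic Hecke characters can be prescribed with essentially arbitrary local components at finite places (the single global constraint at infinity having already been extracted), it suffices to check $(-1)^{l_{1,v}(\chi_v)+l_{2,v}(\chi_v)} = 1$ for each finite $v$ and each local quadratic character $\chi_v$ of $\KK_v^\times$ --- a finite verification at each $v$. The $l_{1,v}$-piece is controlled by $\langle \rho_T,\chi_v\rangle$, encoded by $n_1,\dots,n_4$ in the $p$-odd case; in the $l_{2,v}$-piece, the factor $\langle \mathds{1},\chi_v\rangle + \langle \eta,\chi_v\rangle + 1$ is even iff $\chi_v$ is unramified, and $\tilde{\varphi}(e)/[\KK_v(\zeta_e):\KK_v]$ is odd precisely for $e$ odd, $e=4$, or $2\|e$ --- exactly the $e$ appearing in $W_g$.

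The main obstacle will be the $p$-odd case, where the four parity equations obtained by specializing $\chi_v$ to $\mathds{1},\eta,\chi_1,\chi_2$ must be shown to be collectively equivalent to the stated congruences on $n_1,\dots,n_4$ together with $W_g = (-1)^{n_1+n_3}$. The key bookkeeping step is expressing $\langle \rho_{e,f},\chi_v\rangle \bmod 2$ in terms of $W_{q,e}$ by decomposing $\rho_{e,f}$ and matching its one-dimensional constituents with the classification of tamely ramified quadratic characters of $\KK_v^\times$. The $p=2$ case is markedly simpler because the hypothesis forces $\rho_T = 0$, so $l_{1,v}$ vanishes and the local condition collapses directly to $W_g = 1$.
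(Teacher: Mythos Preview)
Your approach differs substantively from the paper's. The paper proves Theorem \ref{lawful} by \emph{restricting} $\rho_A$ to each quadratic extension $F/K$ and computing $W(A/F)$ directly: for unramified $F/K$ one has $W(\rho_{B/F})=1$ since the residue cardinality becomes a square (Remark \ref{qsquare}); for ramified $F/K$ the inertia image of each $\rho_e$ drops from order $e$ to $e/2$, and one tracks which $e$ survive with odd multiplicity to produce $W_g$. The toric piece is handled by following how the four characters $\mathds{1},\eta,\chi_1,\chi_2$ restrict to $F$. Your route through Theorem \ref{twistthm} is a legitimate alternative in principle, but it trades the elementary restriction combinatorics for the more intricate bookkeeping of $\rho_{e,f}$.

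There is a concrete error in your sketch. The parity of $\tilde\varphi(e)/[K(\zeta_e):K]$ depends on $q$, not only on $e$: for $e=3$ and $q\equiv 1\pmod 3$ this quotient equals $2$, and for odd $e\geqslant 3$ in general it equals $\varphi(e)/\ord_e(q)$, which is odd only when $\ord_e(q)$ contains the full $2$-part of $\varphi(e)$. So your claim that it is ``odd precisely for $e$ odd, $e=4$, or $2\|e$'' is false, and the match with the indices in $W_g$ is not as direct as you suggest. There is a second issue you have not addressed: Criterion A at $v$ is equivalent (by the paper's Theorem \ref{lawful}) to $W(A/F_v)=1$ for all quadratic $F_v/\KK_v$, which unwinds via $W(A/F_v)=W(A/\KK_v)W(A^{\chi_v}/\KK_v)$ to $(-1)^{l_{1,v}+l_{2,v}}=\chi_v(-1)^{\dim A}$, \emph{not} to $(-1)^{l_{1,v}+l_{2,v}}=1$. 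The global conclusion still follows after taking the product (the extra factor $\prod_{v<\infty}\chi_v(-1)^{\dim A}$ cancels against $(\sign\chi)^{\dim A}$), but your local reduction ``it suffices to check $(-1)^{l_{1,v}+l_{2,v}}=1$'' is not a consequence of Criterion A when $\dim A$ is odd.
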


\begin{ack}
	I would like to thank Vladimir Dokchitser for many helpful discussions and the referee for useful comments. I also extend my gratitude to the University of Warwick and King's College London where this research was done.
\end{ack}

\section{Background on root numbers and the $\ell$-adic representation}
\label{background}	
Local root numbers are defined in terms of $\varepsilon$-factors; in this section we shall briefly state the important properties we use. For more detail, see for example \cite[pp.13,15]{Tat79} or \cite[p.144]{Roh94}.
	
Let $K$ be a local field, $\chi: K^{\times} \rightarrow \C^{\times}$ a character. Using the Artin map, we can identify $\chi$ with a character of the Weil group. Let $\psi$ be a non-trivial additive character of $K$ and $dx$ an additive Haar measure on $K$. Tate gives explicit formulae for computing $\varepsilon(\chi,\psi,dx) \in \C^{\times}$ and discusses the properties of $\varepsilon$-factors that allow one to uniquely extend the definition to arbitrary dimension.
	
\begin{theorem}
\label{props}
	Let $\chi$ be a one dimensional unramified complex representation of the Weil group over $K$. Then $$\varepsilon(\chi,\psi,dx)=\dfrac{\chi(\pi_K^{n(\psi)})}{||\pi_K^{n(\psi)}||} \int_{\mathcal{O}_K} dx,$$ where $\pi_K$ is a uniformiser and $n(\psi)$ is the conductor exponent of $\psi$, i.e. the largest $n$ such that $\psi(\pi^{-n}\mathcal{O}_K)=1$.
		
	Let $\rho, \rho'$ be two finite dimensional Weil representations of $\mathcal{W}(K^{\rm{sep}}/K)$ and let $a(\rho)$ denote the Artin conductor exponent of $\rho$.
	\begin{enumerate}
	\item $\varepsilon(\rho \oplus \rho',\psi,dx)=\varepsilon(\rho,\psi,dx)\varepsilon(\rho',\psi,dx)$.
	\item Let $L/K$ be a finite extension. Let $\sigma, \sigma'$ be finite dimensional Weil representations of $\mathcal{W}(K^{\rm{sep}}/L)$ such that $\dim \sigma = \dim \sigma'$ and fix an additive Haar measure $dx_L$ on $L$. Then $$\dfrac{\varepsilon(\Ind_{L/K} \sigma,\psi, dx_L)}{\varepsilon(\Ind_{L/K} \sigma',\psi, dx_L)}=\dfrac{\varepsilon(\sigma,\psi \circ \Tr_{L/K},dx)}{\varepsilon(\sigma',\psi \circ \Tr_{L/K},dx)},$$ where $\Tr_{L/K}$ is the trace map and $\Ind_{L/K}$ is induction from $\mathcal{W}(K^{\rm{sep}}/L)$ to $\mathcal{W}(K^{\rm{sep}}/K)$.
	\item $\varepsilon(\rho \otimes \rho', \psi, dx)=\varepsilon(\rho,\psi, dx)^{\dim \rho'} ((\det \rho')(\pi_K^{a(\rho)+n(\psi)\dim \rho}))$ if $\rho'$ is unramified.
	\item $\varepsilon(\rho \oplus \rho^*, \psi, dx)= ((\det\rho)(-1))q^{n(\psi)\dim(\rho)+a(\rho)}$.
	\end{enumerate}
\end{theorem}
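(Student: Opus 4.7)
The plan is to derive the explicit formula for unramified one-dimensional $\chi$ directly from Tate's local functional equation, and then obtain (i)--(iv) as consequences of the Langlands--Deligne axiomatic extension of $\varepsilon$-factors to higher-dimensional Weil representations.

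For the explicit formula, I would take the Schwartz--Bruhat test function $f = \mathds{1}_{\mathcal{O}_K}$, whose Fourier transform with respect to $\psi$ and $dx$ is $(\int_{\mathcal{O}_K} dx)\cdot \mathds{1}_{\pi_K^{-n(\psi)}\mathcal{O}_K}$ by the defining property of $n(\psi)$. Plugging $f$ and $\hat f$ into Tate's local functional equation
\[
\varepsilon(\chi,\psi,dx)\cdot \zeta(f,\chi|\cdot|^s) \;=\; \zeta(\hat f,\chi^{-1}|\cdot|^{1-s}),
\]
both local zeta integrals become geometric series in $\chi(\pi_K)q^{-s}$ that share a common denominator, and their ratio collapses to the claimed $\chi(\pi_K^{n(\psi)})/||\pi_K^{n(\psi)}||\cdot \int_{\mathcal{O}_K} dx$.

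For the numbered properties: (i) is immediate, since the Langlands--Deligne extension is by construction a multiplicative function on virtual Weil representations. For (iii), I would first treat $\rho'=\chi$ one-dimensional unramified --- where $a(\rho\otimes\chi)=a(\rho)$ and a direct twisted local-zeta computation produces the factor $\chi(\pi_K^{a(\rho)+n(\psi)\dim\rho})$ --- and then handle higher-dimensional unramified $\rho'$ by diagonalising on Frobenius, applying additivity, and identifying the product of the diagonal characters with $\det\rho'$. For (iv), reduce via Brauer induction and (i) to one-dimensional $\chi$: for unramified $\chi$ the identity drops out of the explicit formula above, and for ramified $\chi$ it follows from Tate's Gauss-sum evaluation of $\varepsilon(\chi,\psi,dx)\varepsilon(\chi^{-1},\psi,dx)$, with the $n(\psi)$ powers tracked through the measure normalisation. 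Finally, (ii) is Deligne's inductivity of local constants in degree zero: since $\dim\sigma=\dim\sigma'$, the virtual representation $\sigma-\sigma'$ has degree $0$, and the Langlands $\lambda$-factor $\lambda_{L/K}(\psi)$ that relates $\varepsilon(\Ind_{L/K}\sigma,\psi,dx_L)$ to $\varepsilon(\sigma,\psi\circ\Tr_{L/K},dx)$ is independent of $\sigma$ and cancels in the stated ratio.

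The hardest step is (ii): properties (i), (iii), (iv) reduce either to the definition or to one-dimensional Gauss-sum computations pieced together by Brauer induction, whereas inductivity in degree zero rests on the full Langlands--Deligne existence-and-uniqueness theorem for local constants, which guarantees that any two Brauer decompositions of the same degree-$0$ virtual representation yield the same $\varepsilon$. In practice I would either cite Deligne's paper on local constants directly for (ii), or else verify the identity separately on the unramified and totally tamely ramified towers and glue the two cases via the tame filtration of the Weil group.
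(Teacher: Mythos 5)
The paper gives no proof of this theorem at all: it is stated as background, with the reader referred to Tate's Corvallis article and Rohrlich's survey, so there is nothing internal to compare your argument against. Judged against those sources, your sketch is the standard one and is essentially correct: the unramified formula comes from Tate's local functional equation with test function $\mathds{1}_{\mathcal{O}_K}$; (i) is additivity built into the Langlands--Deligne local constants; (ii) is Deligne's inductivity in degree zero, with the $\lambda_{L/K}(\psi)$ factor cancelling in the ratio; (iii) and (iv) reduce by Brauer induction and the conductor--discriminant formula to one-dimensional Gauss-sum computations. One bookkeeping point to fix: the identity $\varepsilon\cdot\zeta(f,\chi|\cdot|^s)=\zeta(\hat f,\chi^{-1}|\cdot|^{1-s})$ as you wrote it defines the $\gamma$-factor, not $\varepsilon$, and the two zeta integrals do \emph{not} share a common denominator (one is $(1-\chi(\pi_K)q^{-s})^{-1}$ up to volume factors, the other $(1-\chi^{-1}(\pi_K)q^{s-1})^{-1}$); you must divide by $L(s,\chi)$ and $L(1-s,\chi^{-1})$ respectively before the ratio collapses to the stated monomial. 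With that correction the derivation goes through, and the rest of your outline matches the proofs in the cited references.
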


\begin{remark}
	One can also define $\varepsilon$-factors for a Weil--Deligne representation by connecting it to the $\varepsilon$-factor of its semisimplification; we will however use known results for our computations in this case so will not concern ourselves with this.
\end{remark}
	
\begin{definition}
	Let $\rho, \psi, dx$ be as above. Then the local root number of $\rho$ is $$W(\rho,\psi,dx)=\dfrac{\varepsilon (\rho,\psi,dx)}{|\varepsilon (\rho,\psi,dx)|}.$$
\end{definition}
	
\begin{corollary}(to Theorem \ref{props})
\label{dualrep}
	Let $\rho$ be a finite dimensional Weil representation, $\psi$ an additive character and $dx$ a Haar measure. Then:
	\begin{enumerate}
		\item $W(\rho \otimes \xcyc^k, \psi, dx)=W(\rho, \psi, dx)$ for any $k \in \R$;
		\item $W(\rho \oplus \rho^*, \psi, dx)= (\det \rho)(-1)$.
	\end{enumerate}
\end{corollary}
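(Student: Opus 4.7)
The two claims are direct consequences of parts (iii) and (iv) of Theorem \ref{props}, so the plan is essentially to read off the formulas, check that the nuisance factor has absolute value $1$ in one case and is positive real in the other, and then divide.

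For (i), I would start by noting that $\xcyc^k$ is a one-dimensional unramified character of $\mathcal{W}(K^{\rm{sep}}/K)$, even for real $k$, because $\xcyc$ is trivial on inertia and $\xcyc(\Frob)=q>0$ so $\xcyc^k(\Frob)=q^k$ is well-defined in $\R_{>0}$. Thus $\det(\xcyc^k)=\xcyc^k$ and $\dim \xcyc^k=1$, and applying Theorem \ref{props}(iii) with $\rho'=\xcyc^k$ yields
\begin{equation*}
\varepsilon(\rho\otimes\xcyc^k,\psi,dx)=\varepsilon(\rho,\psi,dx)\cdot \xcyc^k\bigl(\pi_K^{a(\rho)+n(\psi)\dim\rho}\bigr)=\varepsilon(\rho,\psi,dx)\cdot q^{k(a(\rho)+n(\psi)\dim\rho)}.
\end{equation*}
Since the scalar $q^{k(a(\rho)+n(\psi)\dim\rho)}$ is a positive real number, it is unaffected by normalising by the absolute value, so the quotient defining $W$ is unchanged. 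This gives $W(\rho\otimes\xcyc^k,\psi,dx)=W(\rho,\psi,dx)$.

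For (ii), I would apply Theorem \ref{props}(iv) verbatim:
\begin{equation*}
\varepsilon(\rho\oplus\rho^*,\psi,dx)=((\det\rho)(-1))\,q^{n(\psi)\dim\rho+a(\rho)}.
\end{equation*}
Now $(\det\rho)(-1)\in\{\pm 1\}$ since for any character $\chi$ we have $\chi(-1)^2=\chi(1)=1$. Hence $|\varepsilon(\rho\oplus\rho^*,\psi,dx)|=q^{n(\psi)\dim\rho+a(\rho)}$, and dividing gives $W(\rho\oplus\rho^*,\psi,dx)=(\det\rho)(-1)$.

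There is no real obstacle here; both parts reduce to plugging in to the formulas in Theorem \ref{props} and checking signs/absolute values. The only subtle point is (i) for non-integer $k\in\R$, which I would handle explicitly by remarking that $\xcyc^k$ is still well-defined on the Weil group (via $\Frob\mapsto q^k$, inertia acting trivially) and so part (iii) of Theorem \ref{props} applies. Everything else is immediate.
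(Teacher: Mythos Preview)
Your proposal is correct and is exactly the argument implicit in the paper: the corollary is stated without proof as an immediate consequence of Theorem~\ref{props}(iii) and (iv), and you have simply written out the verification that the extra scalar factors are positive real in both cases. The only minor remark is that in part~(i) the value $\xcyc^k(\pi_K^{a(\rho)+n(\psi)\dim\rho})$ could be $q^{\pm k(a(\rho)+n(\psi)\dim\rho)}$ depending on the normalisation of the Artin map, but this does not affect the argument since either way the scalar is positive real.
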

	
\begin{definition}
	Let $A/K$ be an abelian variety over a local field. Then the canonical Weil--Deligne representation $\rho_A$ is the complexification of the $\ell$-adic Galois representation acting on $$(\varprojlim\limits_n A[\ell^n] \otimes_{\Z_{\ell}} \Q_\ell)^* \cong H^1_{\acute{e}t}(A/K^{\rm{sep}},\Q_\ell),$$ for any rational prime $\ell$ different to the residue characteristic of $K$.\footnote{This is independent of the choice of $\ell$ \cite[Th\'{e}or\`{e}me 4.3b]{Gro72}.}
\end{definition}

\begin{fact}\cite[Proposition 1.10]{Sab07}
\label{decomp}
	Let $A/K$ be an abelian variety over a local field. Then there exists a Galois representation $\rho_T$ and a semisimple Weil--Deligne representation $\rho_B$ such that:
	\begin{enumerate}
		\item $\rho_B$ has finite image of inertia;
		\item $\rho_B \otimes \xcyc^{1/2}$ is symplectic;
		\item $\rho_T: \Gal(K^{\rm{sep}}/K) \rightarrow \GL_r(\Z)$ for some $0 \leqslant r \leqslant \dim A$;
		\item $\rho_A \cong \rho_B \oplus (\rho_T \otimes \xcyc^{-1} \otimes \sp(2))$,
	\end{enumerate}
	where $\xcyc$ and $\sp(2)$ are the cyclotomic character and special representation of dimension $2$ respectively.
\end{fact}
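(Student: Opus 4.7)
The approach is to realise the decomposition geometrically via the N\'eron model of $A$ over a finite extension where $A$ acquires semistable reduction. By Grothendieck's semistable reduction theorem, there exists a finite Galois extension $L/K$ such that $A_L$ is semistable; let $\mathcal{A}/\mathcal{O}_L$ be its N\'eron model and $\mathcal{A}^0_{k_L}$ the identity component of its special fibre. Chevalley's theorem together with the semistability hypothesis gives a canonical (hence $\Gal(L/K)$-equivariant) short exact sequence
\[
0 \longrightarrow \mathbf{T} \longrightarrow \mathcal{A}^0_{k_L} \longrightarrow B \longrightarrow 0,
\]
with $\mathbf{T}$ a torus of some dimension $r \leqslant \dim A$ and $B$ an abelian variety over $k_L$; every subsequent piece will be traced back to this sequence.

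Passing to $\ell$-adic Tate modules and dualising to land in $H^1$ produces Grothendieck's monodromy weight filtration on $\rho_A$, whose graded pieces are (from bottom to top) a weight $-2$ piece coming from $\mathbf{T}$, a weight $-1$ piece coming from $B$, and a weight $0$ piece Tate-dual to the first. I would define $\rho_B$ to be the complexification of the middle piece: since $B$ has good reduction over $L$, inertia acts trivially on $T_\ell B$ over $L$ and hence only through $\Gal(L/K)$ over $K$, giving the required finite inertia image. Semisimplicity of $\rho_B$ then follows from Faltings' theorem applied to $B/k_L$, combined with the action of the finite group $\Gal(L/K)$.

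For the torus pieces, the cocharacter lattice $X_*(\mathbf{T})$ is a free $\Z$-module of rank $r$ on which Galois acts through a finite quotient (since $\mathbf{T}$ splits over a finite extension of $L$); this action defines the integral representation $\rho_T: \Gal(K^{\rm{sep}}/K) \to \GL_r(\Z)$. The identification $T_\ell \mathbf{T} \cong X_*(\mathbf{T}) \otimes \Z_\ell(1)$ shows the weight $-2$ graded piece is $\rho_T \otimes \xcyc^{-1}$, while the weight $0$ piece is its Tate dual and therefore equals $\rho_T$ itself. Grothendieck's monodromy operator $N$ links these two pieces, and $N^2=0$ since the weights only span $2$; the assembled data is precisely $\rho_T \otimes \xcyc^{-1} \otimes \sp(2)$.

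What I expect to be the main obstacle is upgrading this three-step filtration to a direct sum decomposition. I would argue that the Frobenius eigenvalues on the middle piece all have absolute value $q^{1/2}$ (Weil conjectures for $B$), whereas those on the outer pieces have absolute values $1$ and $q$ respectively, so there are no Galois homomorphisms across graded pieces and the filtration splits after semisimplification. Finally, the symplectic property of $\rho_B \otimes \xcyc^{1/2}$ is inherited from the Weil pairing $T_\ell A \times T_\ell A \to \Z_\ell(1)$, which is alternating and perfect; restricting to the middle graded piece and absorbing the $\xcyc^{-1}$ twist into $\xcyc^{1/2}$ produces the required alternating self-pairing.
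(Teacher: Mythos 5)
The paper offers no proof of this statement to compare against: it is imported verbatim as a Fact with the citation to Sabitova's Proposition 1.10, which in turn rests on Grothendieck's SGA 7 analysis of the Tate module of a semistable abelian variety. Your sketch is essentially that standard argument --- semistable reduction over $L$, the semiabelian structure of the special fibre of the N\'eron model, the monodromy filtration with $N^2=0$, and the Weil pairing inducing the symplectic form on the middle graded piece --- and the outline is correct. Three details deserve tightening. First, semisimplicity of the abelian part is not Faltings: for $B$ over the finite field $k_L$, semisimplicity of Frobenius on $V_\ell B$ is due to Weil and Tate (Frobenius generates a commutative semisimple subalgebra of $\End(B)\otimes\mathbb{Q}$), and Clifford theory then carries this through the finite quotient $\Gal(L/K)$ as you indicate. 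Second, ``the filtration splits after semisimplification'' is close to a tautology; the substantive point is that $\rho_A$ is here the Frobenius-semisimplified Weil--Deligne representation, for which the decomposition holds with the $N\neq 0$ part being exactly $\rho_T\otimes\xcyc^{-1}\otimes\sp(2)$ and $N$ vanishing on $\rho_B$ --- as a genuine $\ell$-adic representation the filtration need not split, so you should say explicitly that you work in the Weil--Deligne category. Your weight argument does correctly rule out nonzero morphisms between distinct graded pieces. Third, the duality bookkeeping is loose: $\rho_A$ is $H^1$, so the relevant lattice is the character lattice $X^*(\mathbf{T})$ rather than $X_*(\mathbf{T})$ (these are contragredient integral representations, so existence is unaffected, but the precise $\rho_T$ is used elsewhere in the paper), and the placement of the twist $\xcyc^{-1}$ versus $\xcyc^{1/2}$ should be checked against the normalisation $\xcyc(\Frob)=q$. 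None of these affects the soundness of the approach.
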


\begin{remark}
	Knowing that $\rho_T$ has image with integral entries is useful for Lemma \ref{mult} later, but otherwise we can embed $\Z \hookrightarrow \C$ and assume that $\rho_T$ is semisimple when we use inner products.
\end{remark}	

\begin{remark}
	In fact, there exists an abelian variety $B/K$ with potentially good reduction whose associated Weil--Deligne representation is isomorphic to $\rho_B$.
\end{remark}

\begin{definition}
	Let $A/K$ be an abelian variety and decompose $\rho_A$ as in Fact \ref{decomp}. We say $A/K$ has
	\begin{enumerate}
		\item \emph{tame reduction} if $A/L$ is semistable for some finite tamely ramified extension $L/K$;
		\item \emph{potentially good reduction} if $\rho_T$ is the zero representation;
		\item \emph{potentially totally toric reduction} if $\rho_B$ is the zero representation.
	\end{enumerate}
\end{definition}

\begin{definition}
	Let $A/\KK$ be an abelian variety over a global or local field and let $\tau$ be an Artin representation of $\Gal(\KK^{\rm{sep}}/\KK)$. 
	\begin{enumerate}
		\item If $\KK=K$ is a local field, then $$W(A/K, \tau):=W(\rho_A \otimes \tau, \psi,dx).$$
		\item If $\KK$ is a global field, then $$W(A/\KK, \tau):= \prod\limits_{v \in M_\KK} W(A/\KK_v,\tau_v),$$ where $M_{\KK}$ is the set of places of $\KK$ and $\tau_v$ is the restriction of $\tau$ to the decomposition group $\Gal(\KK_v^{\rm{sep}}/\KK_v)$.
	\end{enumerate}
	In both cases, we write $W(A/\KK)$ for $W(A/\KK, \mathds{1})$.
\end{definition}

\begin{remark}		
	It is clear from the definition of $\varepsilon$-factors that the root number is independent of the choice of Haar measure $dx$. It is also independent of $\psi$ whenever $\rho$ is symplectic \cite[p.315]{Roh96} which will always be the case. We have therefore chosen to fix $\psi$ such that $\ker \psi = \mathcal{O}_K$ throughout to further ease notation, although our method can be adapted to prove this independence directly.
\end{remark}

\section{Potentially good reduction}
\subsection{Decomposition of the representation}	
We first study the Weil--Deligne representation $\rho_B$ which can identified with a representation of the Weil group with finite image of inertia.
	
Our assumption that $\rho_A$ (and hence also $\rho_B$) is tamely ramified implies that $p$ will necessarily be coprime to the order of the image of the inertia group. We shall want to deal with the irreducible summands of $\rho_B$ so our first step in that direction is the following simple lemma about $\rho_B(I)$, which follows directly from the structure of the Weil group.

\begin{lemma}
\label{reduce}
	Let $\rho: \mathcal{W}(K^{\rm{sep}}/K) \rightarrow \GL(V)$ be a tamely ramified representation with finite image of inertia. Suppose the characteristic polynomial of $\rho(\iota)$ has coefficients in $\Z$. If the characteristic polynomial is reducible over $\Z$, then $\rho$ is also reducible as a Weil representation.
\end{lemma}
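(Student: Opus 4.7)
The plan is to decompose $V$ using the semisimple action of $\rho(\iota)$, then show the resulting pieces are Weil-invariant. Since $\rho$ is tame with finite image of inertia, $\rho(\iota)$ has finite order coprime to $p$, hence is diagonalisable with roots of unity as eigenvalues. For each positive integer $d$, let $V_d \subseteq V$ denote the sum of the $\zeta$-eigenspaces $V_\zeta$ over all primitive $d$-th roots of unity $\zeta$. By hypothesis the characteristic polynomial of $\rho(\iota)$ lies in $\Z[x]$, so its irreducible factorisation over $\Z$ takes the form $\prod_d \Phi_d(x)^{m_d}$ (with $\Phi_d$ the $d$-th cyclotomic polynomial), and Galois invariance forces $\dim V_\zeta = m_d$ for every primitive $d$-th root $\zeta$.

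The key observation is that each $V_d$ is a Weil subrepresentation. Indeed, the tame Weil group satisfies $\Frob \iota \Frob^{-1} = \iota^q$ (modulo wild inertia, which acts trivially), so $\rho(\Frob)$ sends $V_\zeta$ into $V_{\zeta^{q^{-1}}}$. Since $\gcd(q,d)=1$ (by tameness, $p$ is coprime to the order of $\rho(\iota)$, hence to $d$), the exponent $\zeta^{q^{-1}}$ is again a primitive $d$-th root of unity, so $\rho(\Frob)V_d \subseteq V_d$. Combined with the trivial action of wild inertia and the obvious $\iota$-invariance, this makes $V_d$ stable under the whole Weil group.

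Now suppose the characteristic polynomial is reducible over $\Z$. Either (i) at least two distinct $d$'s have $m_d \geq 1$, in which case the nonzero $V_d$'s immediately exhibit $\rho$ as reducible; or (ii) there is a single $d$ with $m_d \geq 2$, so $V = V_d$ has dimension $\varphi(d) m_d$. In case (ii), let $f = [K(\zeta_d) : K]$ be the size of a $\Frob$-orbit on the primitive $d$-th roots of unity. If $f < \varphi(d)$, the sum $W$ of eigenspaces over a single orbit is a Weil subrepresentation of dimension $fm_d < \varphi(d) m_d$, hence proper. If $f = \varphi(d)$ (only one $\Frob$-orbit), note that $\rho(\Frob)^f$ preserves $V_\zeta$, so pick an eigenvector $v_0 \in V_\zeta$ of this operator; then $\mathrm{span}(v_0, \rho(\Frob)v_0, \ldots, \rho(\Frob)^{f-1} v_0)$ is an $f$-dimensional Weil subrepresentation, which is proper since $m_d \geq 2$ gives $\dim V = fm_d > f$.

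The main subtlety is case (ii) with a single $\Frob$-orbit: simply decomposing by $\iota$-eigenspaces is not enough, and one must drill down inside a single eigenspace $V_\zeta$ using the action of $\rho(\Frob)^f$. Everything else is the bookkeeping of how $\Frob$ shuffles eigenspaces of $\iota$, which follows mechanically from the presentation of the tame quotient of the Weil group.
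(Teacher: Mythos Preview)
Your proof is correct and is precisely the argument the paper has in mind when it says the lemma ``follows directly from the structure of the Weil group'': you use the relation $\Frob\,\iota\,\Frob^{-1}=\iota^q$ to show that $\rho(\Frob)$ permutes the $\rho(\iota)$-eigenspaces within each $V_d$, and then handle the repeated-cyclotomic case by passing to a $\rho(\Frob)^f$-eigenvector. The paper gives no further detail, so there is nothing to compare beyond noting that you have spelled out the case analysis (multiple $d$'s versus a single $d$ with multiplicity, and within the latter, one $\Frob$-orbit versus several) that the paper leaves implicit.
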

	
Therefore by decomposing $\rho_B$ if necessary, we may assume that the eigenvalues of $\rho_B(\iota)$ are all primitive $e^{\rm{th}}$ roots of unity for some fixed $e$. The irreducible summands of $\rho_B$ are one-dimensional unramified twists of representations of Galois type.
	
\begin{definition}
	Let $\rho: \mathcal{W}(K^{\rm{sep}}/K) \rightarrow \GL(V)$ be a Weil representation. Then $\rho$ is said to be of Galois type if it factors through an open subgroup of finite index. Equivalently, there exists a finite Galois extension $L/K$ such that $\rho$ is trivial on $\mathcal{W}(K^{\rm{sep}}/L)$.
\end{definition}

Note $\dfrac{\mathcal{W}(K^{\rm{sep}}/K)}{\mathcal{W}(K^{\rm{sep}}/L)} \cong \Gal(L/K)$ so we may (and freely do so) identify representations of Galois type with Artin representations. We therefore study Artin representations which factor through a finite Galois extension $L/K$. Since we assume that $\rho_B$ is tamely ramified, we shall only concern ourselves with the case when $L/K$ is tamely ramified. We may hence assume that our Artin representations factor (not necessarily faithfully) through a tame Galois extension $L/K$ with $$\Gal(L/K)=\langle \iota, \Frob | \iota^e, \Frob^n, \Frob \iota \Frob^{-1} = \iota^q \rangle \cong C_e \rtimes C_n,$$ where $e,n$ are the ramification and residue degrees of $L/K$ respectively. Note that the order $f$ of $q \bmod{e}$ necessarily divides $n$. Moreover, we may suppose that the Artin representation $\theta$ is faithful on the inertia subgroup by factoring through the kernel of $\theta$ restricted to inertia if necessary.
	
\begin{notation} 
Throughout the remainder of this section, we let $\theta$ denote an irreducible, Artin representation of $\Gal(L/K)=\langle \iota, \Frob| \iota^e, \Frob^n, \Frob \iota \Frob^{-1} = \iota^q \rangle$, where, $p \nmid e$, $\Aut_{\langle \Frob \rangle}(\langle \iota \rangle) \cong C_f$ (i.e. $q \bmod{e}$ has order $f$) and we suppose that $f>1$ and hence $e>2$.
\end{notation}
	
\begin{lemma}
\label{induce}
Let $\theta$ be an irreducible, tamely ramified representation of $\Gal(L/K)$ which is faithful on the inertia subgroup. Then:
	\begin{enumerate}
	\item $\dim \theta=f$;
	\item $\theta= \Ind_{\langle \iota, \Frob^f \rangle}^{\Gal(L/K)} \chi \otimes \gamma$ where $\chi$ is a character of $\langle \iota \rangle$, $\gamma$ is a character of $\langle \Frob^f \rangle$;
	\item $(\det \theta) (\iota)=\chi\left( \iota^{\frac{q^f-1}{q-1}} \right), \quad
			(\det \theta) (\Frob)=(-1)^{1+\dim \theta}\gamma(\Frob^f);$
	\item $\theta$ is self-dual if and only if $f$ is even, $q^{f/2} \equiv -1 \bmod{e}$ and $\gamma^2=\mathds{1}$; moreover, $\theta$ factors faithfully through $\Gal(L/K)/\ker \gamma$;
	\item if $\theta$ is self-dual, then $\theta$ is orthogonal if and only if $\gamma=\mathds{1}$;
	\item the Artin conductor exponent of $\theta \otimes \nu$ is equal to $f$ for any one dimensional unramified character $\nu$ of $\mathcal{W}(K^{\rm{sep}}/K)$.
	\end{enumerate}	
\end{lemma}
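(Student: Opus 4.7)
My plan centers on Clifford theory and Mackey's criteria applied to the metacyclic group $G=\Gal(L/K)$. Note that $\langle\iota\rangle\trianglelefteq G$ is abelian and $\Frob^f$ commutes with $\iota$ (since $\Frob^f\iota\Frob^{-f}=\iota^{q^f}=\iota$), so $H:=\langle\iota,\Frob^f\rangle$ is an abelian normal subgroup of $G$ of index $f$. For (i) and (ii) I would apply Clifford's theorem: $\theta|_{\langle\iota\rangle}$ splits as a direct sum of characters in a single $\Frob$-orbit, and faithfulness on inertia forces each summand $\chi$ to have exact order $e$. The $\Frob$-stabiliser of $\chi$ is $\{\Frob^j:\chi^{q^j}=\chi\}=\langle\Frob^f\rangle$, so the orbit has size $f$, and $\chi$ extends to a linear character of $H$ of the form $\chi\otimes\gamma$ for some character $\gamma$ of $\langle\Frob^f\rangle$; its induction to $G$ recovers $\theta$, in particular $\dim\theta=[G:H]=f$.

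For (iii) the plan is to invoke the block-matrix formula for the determinant of an induced representation, using coset representatives $\{\Frob^j\}_{j=0}^{f-1}$. Evaluating at $\iota\in H$ the permutation of $G/H$ is trivial with diagonal blocks $\chi(\iota^{q^{-j}})$, and using $q^f\equiv 1\pmod e$ the product telescopes to $\chi(\iota^{(q^f-1)/(q-1)})$. Evaluating at $\Frob$ the permutation is an $f$-cycle of sign $(-1)^{f-1}=(-1)^{1+\dim\theta}$ with single nontrivial block $\gamma(\Frob^f)$, yielding the claimed value of $(\det\theta)(\Frob)$.

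For (iv), since $H$ is normal in $G$, Mackey's criterion gives $\Ind_H^G\psi\cong\Ind_H^G\psi'$ iff $\psi'$ lies in the $G/H$-orbit of $\psi$. Applied to $\psi':=\bar\psi=\chi^{-1}\otimes\gamma^{-1}$, restricting the required equality to $\langle\iota\rangle$ forces $\chi^{-1}=\chi^{q^j}$, so $q^j\equiv-1\pmod e$; since $q$ has order $f$ modulo $e$, this is possible iff $f$ is even and $q^{f/2}\equiv-1\pmod e$. Restricting to the central element $\Frob^f$ (fixed by conjugation) then gives $\gamma^{-1}=\gamma$, i.e.\ $\gamma^2=\mathds{1}$. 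For the faithfulness statement I would compute $\ker\theta=\bigcap_{g\in G}g(\ker\psi)g^{-1}$; faithfulness of $\chi$ on $\langle\iota\rangle$ and centrality of $\Frob^f$ reduce this intersection to $\ker\gamma$.

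For (v) I would compute the Frobenius--Schur indicator $\nu_2(\theta)=\frac{1}{|G|}\sum_{g\in G}\chi_\theta(g^2)$. Substituting the induced character formula and making the change of variables $g=xhx^{-1}$ simplifies the double sum to $\nu_2(\theta)=\frac{f}{|G|}\sum_{h\in G,\,h^2\in H}\psi(h^2)$. Writing $h=\iota^a\Frob^j$ and using $h^2\in H$ together with the self-duality hypothesis $q^{f/2}\equiv-1\pmod e$, the inner sum over $a$ vanishes unless $j$ is an odd multiple of $f/2$; the orthogonality $\sum_a\chi(\iota^{ma})=e$ iff $e\mid m$ then produces $\nu_2(\theta)=\gamma(\Frob^f)$, which equals $+1$ precisely when $\gamma=\mathds{1}$. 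Finally, (vi) follows from the tame Artin conductor formula $a(\theta\otimes\nu)=\dim V-\dim V^I$: since $(\theta\otimes\nu)|_I=\theta|_I$ contains the nontrivial character $\chi$ of inertia we get $V^I=0$ and hence $a(\theta\otimes\nu)=f$. The main obstacle will be the bookkeeping in (v), where one must carefully identify which $h$ yield nonzero contributions and track the resulting central value $\gamma(\Frob^f)$; the remaining parts are relatively routine applications of Clifford/Mackey theory and the tame conductor formula.
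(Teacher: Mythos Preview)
Your proposal is correct. Parts (i)--(iv) and (vi) proceed along the same lines as the paper, which simply cites Serre's classification of irreducible representations of a semidirect product by an abelian group (this is Clifford/Mackey theory in exactly the form you describe) and notes the absence of inertia invariants for (vi).

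The genuine difference is in (v). The paper argues one direction via the determinant formula (iii): if $\theta$ is symplectic then $\det\theta=\mathds{1}$, so $(-1)^{1+f}\gamma(\Frob^f)=1$; since $f$ is even by (iv), this forces $\gamma(\Frob^f)=-1$, hence $\gamma\neq\mathds{1}$. For the converse the paper defers to Sabitova's Proposition~\ref{Sab0}, which classifies the irreducible symplectic representations faithful on inertia. Your approach instead computes the Frobenius--Schur indicator directly and obtains $\nu_2(\theta)=\gamma(\Frob^f)$, which settles both directions at once. This is a cleaner and more self-contained argument: it avoids the external citation and makes the dichotomy $\gamma(\Frob^f)=\pm1\leftrightarrow\text{orthogonal/symplectic}$ completely transparent. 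The paper's route has the advantage of reusing (iii), so no new computation is needed for one direction, but it leaves the other direction to a reference. Your bookkeeping in the indicator computation is correct: the case $f\mid j$ contributes zero since $\sum_a\chi(\iota^{2a})=0$ for $e>2$, and the case $j\equiv f/2\pmod f$ gives the surviving term because $1+q^j\equiv 0\pmod e$ collapses the $a$-sum to $e$.
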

	
\begin{proof}
	The first four statements are purely representation theoretic results and follow from \cite[p.62]{Ser77} (which classifies the irreducible representations of a group which is a semidirect product by an abelian group) and explicitly computing the induced representation. For $(v)$, note that if $\theta$ is symplectic then it has trivial determinant so we necessarily have $\gamma \neq \mathds{1}$; the converse is the content of Proposition \ref{Sab0}. For the final statement, note that there is no inertia invariant subspace.
\end{proof}
			
\begin{lemma}
\label{char}
	Let $\rho$ be a symplectic Weil representation which has finite image of inertia and let $\pi$ be a $1$-dimensional summand of $\rho$. Then there exists another distinct $1$-dimensional summand $\tilde{\pi}$ of $\rho$ such that $\tilde{\pi} \cong \pi^*$.
\end{lemma}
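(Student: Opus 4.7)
The plan is to exploit the fact that a symplectic representation $\rho$ is self-dual (via the alternating form), and then to split into two cases depending on whether $\pi$ itself is self-dual.

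First I would record the basic structural observation. Since $\rho$ carries a non-degenerate Galois-invariant alternating form, we have an isomorphism $\rho\cong\rho^{*}$. In particular, for every irreducible summand $\sigma$ of $\rho$, the dual $\sigma^{*}$ appears in $\rho^{*}\cong\rho$ with the same multiplicity as $\sigma$. Decompose $\rho$ into isotypic components and note that the alternating form pairs the $\sigma$-isotypic component with the $\sigma^{*}$-isotypic component, because there is no nonzero Galois-invariant pairing $\sigma\otimes\tau\to\mathds{1}$ unless $\tau\cong\sigma^{*}$.

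Case 1: $\pi\not\cong\pi^{*}$. Here $\pi^{*}$ appears as a summand of $\rho^{*}\cong\rho$, so there is some irreducible summand $\tilde\pi\cong\pi^{*}$ of $\rho$. Since $\tilde\pi\not\cong\pi$, it is automatically distinct from the fixed copy of $\pi$.

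Case 2: $\pi\cong\pi^{*}$. Then $\pi$ is a quadratic (or trivial) character, and the $\pi$-isotypic component $V_{\pi}\cong\pi^{\oplus m}$ is paired with itself by the symplectic form, with the restriction being non-degenerate (by the isotypic argument above). Writing $V_{\pi}=W\otimes\pi$ with $W$ an $m$-dimensional trivial representation, the restricted form corresponds to a non-degenerate bilinear form on $W$, and the alternating condition on $V_{\pi}$ translates (using $\pi\otimes\pi\cong\mathds{1}$) into the alternating condition on $W$. A non-degenerate alternating form on $W$ forces $m$ to be even, so in particular $m\geq 2$, and hence $\rho$ contains at least two copies of $\pi\cong\pi^{*}$; take $\tilde\pi$ to be one other than the fixed summand.

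The only genuinely non-trivial step is Case 2, where one must identify the restriction of the symplectic form to a self-dual isotypic block as an alternating form on the multiplicity space and conclude that its dimension is even. Everything else is a direct consequence of self-duality of $\rho$ and Schur's lemma.
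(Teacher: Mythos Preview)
Your proof is correct and is precisely a detailed expansion of the paper's one-line argument, which simply reads ``This follows from the symplectic pairing.'' The case split and the multiplicity-space analysis in Case~2 are exactly what underlies that sentence, so your approach is the same as the paper's, just made explicit.
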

	
\begin{proof}
	This follows from the symplectic pairing.
\end{proof}
	
	We now separate into two cases dependent on whether our irreducible Weil representation is self-dual or not; our method for computing the root number is different in each case.
		
\subsection{Dual pairs}		
	We now consider the irreducible representations	which are not self-dual. Our approach is no different if we consider Weil representations $\rho$ instead of Artin representations $\theta$. We first show that in our context, such representations must arise in pairs and then compute the root number with Corollary \ref{dualrep}.
	
\begin{lemma}
	Let $\rho$ be a self-dual Weil representation and let $\sigma$ be an irreducible summand of $\rho$ which is not self-dual. Then $\sigma^*$ is a distinct irreducible summand of $\rho$.
\end{lemma}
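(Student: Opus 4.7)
The plan is to deduce the result from uniqueness of the decomposition of $\rho$ into irreducibles, combined with the compatibility of duality with direct sums. Implicit in the statement is that $\rho$ is semisimple (which in the applications of this lemma follows since we only apply it to $\rho_B$, which is semisimple by Fact~\ref{decomp}), and this is what we need.

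First I would decompose $\rho \cong \bigoplus_{j} \sigma_{j}^{\,n_{j}}$, where the $\sigma_j$ are pairwise non-isomorphic irreducible Weil representations and $n_j \geq 1$. Dualising preserves semisimplicity, commutes with direct sums, and sends irreducibles to irreducibles (with $(\sigma_j^*)^* \cong \sigma_j$, so the $\sigma_j^*$ are again pairwise non-isomorphic), so that
\[
\rho^* \;\cong\; \bigoplus_{j} (\sigma_j^*)^{n_j}.
\]
The assumption $\rho \cong \rho^*$ then forces, by Krull--Schmidt (equivalently, by comparing multiplicities via $\dim \Hom(\sigma_j,-)$ using Schur's lemma), the multiset $\{(\sigma_j, n_j)\}$ to agree with $\{(\sigma_j^*, n_j)\}$. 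In particular, the multiplicity of any irreducible $\pi$ in $\rho$ equals the multiplicity of $\pi^*$ in $\rho$.

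Applying this to $\pi = \sigma$: since $\sigma$ is an irreducible summand of $\rho$, it appears with multiplicity at least one, so $\sigma^*$ also appears in $\rho$ with multiplicity at least one. The hypothesis that $\sigma$ is not self-dual means $\sigma^* \not\cong \sigma$, so $\sigma^*$ is a genuinely distinct irreducible summand, as required.

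There is no real obstacle here; the only thing to be careful about is confirming that the standard multiplicity-is-well-defined argument (Krull--Schmidt / Schur) transfers from finite groups to semisimple representations of the Weil group. This transfer is routine: $\Hom_{\mathcal{W}(K^{\rm{sep}}/K)}(\sigma_j,\rho)$ is finite-dimensional with dimension equal to $n_j$ by Schur, and the analogous computation for $\rho^*$ yields the multiplicity of $\sigma_j^*$ in $\rho$.
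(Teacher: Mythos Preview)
Your argument is correct and is exactly the natural one: compare the isotypic decompositions of $\rho$ and $\rho^*$ and use Schur's lemma to match multiplicities. The paper in fact states this lemma without proof, so your write-up supplies precisely the routine verification the author omitted.
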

	
\begin{remark}
\label{eulertweak}
	Lemma \ref{char} tells us that this also holds for self-dual characters (which have order dividing 2 on inertia) so we shall deal with all characters under this case and assume $\dim \rho \geqslant 2$ in the self-dual setting. This is the reason for our definition of $\tilde{\varphi}$; such self-dual characters will appear with even multiplicity so our tweak of the usual Euler totient function takes this into account.
\end{remark}

\begin{remark}
	If $A/K$ has good reduction, then $\rho_A=\rho_B$ splits as a sum of unramified characters and hence $W(A/K)=1$.
\end{remark}	
	
	Before we examine $\det \sigma$ in more detail, we shall first turn our attention to the Artin map. This enables us to identify $\det \sigma$ with a character $\phi$ of $\Kx$, where the inertia group corresponds to the unit group $\mathcal{O}_K^{\times}$. Since the image of inertia is finite, we are able to discuss the (necessarily finite) order of $\phi$ on inertia. In addition, $\phi$ is at most tamely ramified; it may actually be unramified (and therefore has order 1) but the criterion we shall shortly give will take this into account. 
	
	The subgroup of wild inertia is identified with the group of principal units and hence we can factor our character through this to get a character on the torsion subgroup consisting of elements with order coprime to $p$, which is isomorphic to the residue field. It is this new character that we now deal with; we shall freely swap between the two interpretations.
		
\begin{lemma}
\label{2reduce}
	Let $q \equiv 1 \mod{r}$ and let $\phi: \Kx \rightarrow \C^{\times}$ be a tamely ramified character such that $\phi(\mathcal{O}_K^{\times})$ is cyclic of order $r$. If $q$ is even, then $\phi(-1)=1$. Otherwise $q$ is odd and $$\phi(-1)=1 \Leftrightarrow q \equiv 1 \bmod{2r}.$$
\end{lemma}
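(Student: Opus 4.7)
The plan is to reduce the computation of $\phi(-1)$ to an elementary question about characters of $\mathbb{F}_q^\times$ via tame ramification. Since $\phi$ is tamely ramified, the restriction $\phi|_{\mathcal{O}_K^\times}$ is trivial on the principal units $1 + \pi_K \mathcal{O}_K$ and therefore factors through the quotient $\mathcal{O}_K^\times/(1 + \pi_K \mathcal{O}_K) \cong \mathbb{F}_q^\times$. Write $\bar{\phi}$ for the resulting character of $\mathbb{F}_q^\times$, which by hypothesis is cyclic of order $r$ (forcing $r \mid q-1$, compatible with $q \equiv 1 \bmod r$).

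First I would dispose of the even case: if $q$ is even then $p = 2$, so $-1 - 1 = -2$ lies in $\pi_K \mathcal{O}_K$, i.e.\ $-1 \in 1 + \pi_K \mathcal{O}_K$. Hence $\phi(-1) = 1$ by tameness.

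For the odd case, $-1$ reduces to the unique element of order $2$ in $\mathbb{F}_q^\times$. Fix a generator $g$ of $\mathbb{F}_q^\times$, so that $-1 \equiv g^{(q-1)/2}$ in $\mathbb{F}_q^\times$. Because $\bar{\phi}$ has order exactly $r$, the value $\bar{\phi}(g)$ is a primitive $r$-th root of unity, say $\zeta_r$, giving
\[
\phi(-1) = \bar{\phi}(g)^{(q-1)/2} = \zeta_r^{(q-1)/2}.
\]
This equals $1$ if and only if $r$ divides $(q-1)/2$, which is precisely the condition $q \equiv 1 \bmod{2r}$.

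There is no real obstacle here: the argument is a direct application of the structure of $\mathcal{O}_K^\times$ under tame ramification, with the two cases distinguished only by whether $-1$ lies in the principal units or lifts the unique involution of $\mathbb{F}_q^\times$.
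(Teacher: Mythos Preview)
Your proof is correct and follows essentially the same approach as the paper: both reduce $\phi|_{\mathcal{O}_K^\times}$ to a character of $\mathbb{F}_q^\times$ via tame ramification and then settle the question by elementary cyclic-group considerations. The only cosmetic differences are that the paper handles even $q$ by observing that $r \mid q-1$ forces $r$ odd (so $\phi(-1)^r = 1$ together with $\phi(-1)^2 = 1$ gives the result), and phrases the odd case as ``$-1$ is an $r$-th power in $\mathbb{F}_q^\times$'' rather than computing $\zeta_r^{(q-1)/2}$ directly---but these are equivalent formulations of the same argument.
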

	
\begin{proof}
	First note that if $\phi$ has odd order $r$ (this is true if $q$ is even), then we necessarily have $\phi(-1)=1$, so we may reduce to the case when $\phi$ has $2$-power order. Now as $\phi$ has order $r$, its kernel is the set of $r^{\rm{th}}$ powers and so we are left to determine whether $-1$ is an $r^{\rm{th}}$ power; this gives rise to the stated congruence condition.
\end{proof}
	
	We shall now consider $\det \sigma$ explicitly to get a closed form expression for the corresponding root number. From Lemma \ref{induce}, $(\det \sigma)(\iota)=\zeta_e^{\frac{q^f-1}{q-1}}$ where $\zeta_e$ is a primitive $e^{\rm{th}}$ root of unity, since $\chi$ is faithful. This implies that $(\det \sigma)(\mathcal{O}_K^{\times})$ is cyclic of order $r=\dfrac{e}{\gcd(e,\frac{q^f-1}{q-1})}$. 
	
	The following lemma ascertains that the congruence hypothesis on $q$ in Lemma \ref{2reduce} is satisfied. The proof is straightforward and we therefore choose to omit it here.
	
\begin{lemma}
\label{dualmod}
	Let $e>0$, $q$ a power of an odd prime, $f$ the least positive integer such that $q^f \equiv 1 \mod{e}$. Then $q \equiv 1 \mod{\dfrac{e}{\gcd(e,\frac{q^f-1}{q-1})}}$.
\end{lemma}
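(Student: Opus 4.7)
The plan is to unravel the definitions of all the quantities, after which the claim reduces to a standard coprimality observation about gcds. Let $N = \frac{q^f-1}{q-1} = 1 + q + q^2 + \dots + q^{f-1}$, let $d = \gcd(e, N)$, and let $r = e/d$. The statement of the lemma is simply that $r$ divides $q-1$.

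The key input is the minimality of $f$, which gives $e \mid q^f - 1$, i.e. $e \mid (q-1)\cdot N$. Writing $e = rd$ and using that after dividing by the gcd the cofactors are coprime, we have $\gcd(r, N/d) = 1$. From $rd \mid (q-1)\cdot d \cdot (N/d)$ we deduce $r \mid (q-1)(N/d)$, and coprimality of $r$ and $N/d$ then forces $r \mid q-1$, which is exactly $q \equiv 1 \pmod{r}$. There is no real obstacle here: once the notation is expanded this is a one-line manipulation of divisibility, which is presumably why the authors omit it.
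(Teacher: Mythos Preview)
Your proof is correct; the paper omits the argument entirely, calling it ``straightforward'', and your divisibility manipulation---using $e \mid (q-1)N$ together with the standard fact $\gcd(e/d,\,N/d)=1$ for $d=\gcd(e,N)$---is exactly the kind of one-line verification intended. Note incidentally that you never use the hypothesis that $q$ is a power of an odd prime, and indeed the lemma holds for any integer $q>1$ for which $f$ exists; that hypothesis is present only because of the surrounding context in the paper.
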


The above two lemmas, in conjunction with Corollary \ref{dualrep}, now provide the proof for the following theorem.

\begin{theorem}
\label{dual}
	Let $\sigma$ be a tamely-ramified irreducible $f$-dimensional Weil representation such that $|\sigma(I)|=e$ is finite. If $q$ is even then $W(\sigma \oplus \sigma^*)=1$. Otherwise $$W(\sigma \oplus \sigma^*)=1 \Leftrightarrow q \equiv 1 \bmod{\dfrac{2e}{\gcd(e,\frac{q^f-1}{q-1})}}.$$
\end{theorem}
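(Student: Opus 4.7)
The plan is to reduce the root number computation to a question about values of characters on $-1$, via the structural fact Corollary \ref{dualrep}(ii), and then unwind the resulting character using the determinant calculation in Lemma \ref{induce}(iii) and the arithmetic lemmas \ref{2reduce} and \ref{dualmod}.

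First, I would apply Corollary \ref{dualrep}(ii) directly to obtain
\[
W(\sigma \oplus \sigma^*) \;=\; (\det \sigma)(-1).
\]
Under the Artin map, $\det \sigma$ corresponds to a character $\phi \colon K^{\times} \to \C^{\times}$, and evaluating at $-1$ reduces to evaluating the induced character on the residue field (since $-1 \in \mathcal{O}_K^{\times}$ and $\phi$ is at most tamely ramified by the hypothesis on $\sigma$). The right side of the theorem is then just a restatement of the criterion for $\phi(-1)=1$.

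Next I would pin down the order of $\phi$ on $\mathcal{O}_K^{\times}$. By the discussion preceding Lemma \ref{2reduce} (applying Lemma \ref{induce}(iii) and the fact that $\chi$ is faithful on $\langle \iota \rangle$), one has $(\det\sigma)(\iota) = \zeta_e^{(q^f-1)/(q-1)}$, so $\phi(\mathcal{O}_K^{\times})$ is cyclic of order
\[
r \;=\; \frac{e}{\gcd\!\bigl(e,\tfrac{q^f-1}{q-1}\bigr)}.
\]
Lemma \ref{dualmod} then supplies exactly the congruence $q \equiv 1 \bmod r$ that is required to invoke Lemma \ref{2reduce}.

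Finally, I would conclude by applying Lemma \ref{2reduce} with this value of $r$: if $q$ is even the lemma immediately gives $\phi(-1)=1$, hence $W(\sigma \oplus \sigma^*)=1$; if $q$ is odd, the lemma yields $\phi(-1)=1$ if and only if $q \equiv 1 \bmod 2r$, which is precisely the congruence in the statement. There is no genuine obstacle here: the serious work has already been done in setting up the decomposition of $\rho_B$ and in the two auxiliary lemmas. The one point requiring a little care is making sure the order of $\phi$ on $\mathcal{O}_K^{\times}$ really equals $r$ (rather than a divisor of $r$), which follows from the faithfulness of $\chi$ in Lemma \ref{induce}(ii) together with irreducibility of $\sigma$.
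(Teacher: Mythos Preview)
Your proposal is correct and follows essentially the same route as the paper: the paper's proof is the one-line remark that Corollary \ref{dualrep}(ii) together with Lemmas \ref{2reduce} and \ref{dualmod} (and the determinant computation from Lemma \ref{induce}(iii) preceding them) yield the result. Your added remark on why the order of $\phi$ on $\mathcal{O}_K^{\times}$ is exactly $r$, not merely a divisor, is a helpful point of rigor that the paper leaves implicit.
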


\begin{remark}
	If $\dim \sigma =1$, then $f=1$ hence $W(\sigma \oplus \sigma^*)=1 \Leftrightarrow q \equiv 1 \mod{2e}$.
\end{remark}

\begin{remark}
\label{dualodde}	
	Observe that if $e$ is odd, then $W(\sigma \oplus \sigma^*)=1$ independently of whether $q$ is odd or even.
\end{remark}	

\subsection{The self-dual case}

In this section, we study irreducible self-dual Weil representations. Our first step is to show that they may be identified with Artin representations.
	
\begin{lemma}
	Let $\rho$ be an irreducible self-dual Weil representation. Then $\rho$ is of Galois type and hence isomorphic to an Artin representation.
\end{lemma}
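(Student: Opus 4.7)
The plan is to combine self-duality (which constrains $\det \rho$) with Schur's lemma (applied to the irreducible $\rho$) to force the image of Frobenius to have finite order. Since $\rho \cong \rho^*$, we have $\det \rho \cong (\det \rho)^{-1}$, so $(\det \rho)^2 = \mathds{1}$ and in particular $\det \rho(\Frob) = \pm 1$. Moreover, $\rho$ sits inside $\rho_B$, which by hypothesis has finite image of inertia, so $U := \ker(\rho|_I)$ is an open subgroup of $I$. For any $i \in U$ we have $\rho(\Frob i \Frob^{-1}) = \rho(\Frob) \rho(i) \rho(\Frob)^{-1} = \Id$, so $U$ is $\Frob$-stable, and $\rho$ therefore factors through the semidirect product $(I/U) \rtimes \langle \Frob \rangle$ with $\rho(I)$ a finite subgroup of $\GL(V)$.

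Now conjugation by $\rho(\Frob)$ induces an automorphism of the finite group $\rho(I)$, and since $\Aut(\rho(I))$ is finite some power $\rho(\Frob)^N$ centralises $\rho(I)$. This power automatically commutes with $\rho(\Frob)$ and hence with the whole image $\rho(\mathcal{W}(K^{\rm{sep}}/K)) = \langle \rho(I), \rho(\Frob) \rangle$. By Schur's lemma, applied to the irreducible representation $\rho$, we conclude $\rho(\Frob)^N = \lambda \cdot \Id$ for some $\lambda \in \C^{\times}$. Taking determinants gives $\lambda^{\dim \rho} = \det \rho(\Frob)^N = \pm 1$, so $\lambda$ is a root of unity and $\rho(\Frob)$ has finite order in $\GL(V)$. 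Combined with the finiteness of $\rho(I)$, the full image of $\rho$ is finite, so $\rho$ factors through $\Gal(L/K)$ for some finite Galois extension $L/K$ and may be identified with an Artin representation.

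The only subtle point is the role of self-duality: without the constraint $\det \rho(\Frob) = \pm 1$, Schur's lemma would still produce a scalar $\lambda$, but that scalar could have infinite order, and one would only conclude that $\rho$ is an unramified twist of a representation of Galois type — a weaker conclusion which, incidentally, is what one would invoke to handle the non-self-dual irreducibles addressed in the preceding subsection.
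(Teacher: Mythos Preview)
Your proof is correct and takes a genuinely different route from the paper's. The paper quotes Deligne's classification of irreducible Weil representations, writing $\rho \cong \theta \otimes \nu$ with $\theta$ of Galois type and $\nu$ an unramified character, and then uses self-duality (via determinants: $\nu^{2\dim\theta} = (\det\theta)^{-2}$) to force $\nu$ to have finite order. You instead give a self-contained argument: Schur's lemma shows $\rho(\Frob)^N$ is a scalar, and the determinant constraint $\det\rho(\Frob)=\pm 1$ forces that scalar to be a root of unity. Your final paragraph makes the relationship clear --- without self-duality you would recover exactly the Deligne classification statement (irreducible $\Rightarrow$ unramified twist of Galois type), so you are essentially re-proving the piece of Deligne's result that the paper cites. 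Your approach is more elementary and avoids an external reference; the paper's is shorter once the classification is taken as known.

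One small presentational point: you justify finiteness of $\rho(I)$ by saying $\rho$ sits inside $\rho_B$, importing context not explicit in the lemma statement. This is harmless here (and in any case finiteness of $\rho(I)$ is automatic for continuous complex representations of the profinite group $I$ by the no-small-subgroups argument), but it would be cleaner either to state it as a standing hypothesis or to note that it holds for any complex Weil representation.
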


\begin{proof}
	Note that $\rho \cong \theta \otimes \nu$ for some Artin representation $\theta$ and unramified character $\nu$ by the classification of Weil representations \cite[Proposition 3.1.3]{Deligne1973}. Using the self-duality and that $\theta$ has finite order, we see that $\nu$ has finite order so are done since $\nu^{2\dim \theta} = (\det \theta)^{-2}$.
\end{proof}

This allows us to directly apply the results of Lemma \ref{induce}. Since we have already dealt with the self-dual characters, we suppose that $\dim \theta = f \geqslant 2$. Recall that $\theta$ has the form $\theta=\Ind_H^G \chi \otimes \gamma$, where $H=\langle \iota, \Frob^f \rangle \leq G=\Gal(L/K)$. As $\theta$ is a monomial representation, we use the inductivity property of $\varepsilon$-factors.

To apply Theorem \ref{props}ii, we need to choose an auxiliary character of $H$ to $\chi \otimes \gamma$; the obvious choice here is the trivial character $\mathds{1}_H$. This has several benefits as we shall see. Firstly, its induction is simply the permutation action on the cosets of $H$. In other words, $\Ind_H^G \mathds{1}_H = \C[G/H]$, but since $G/H$ is an abelian group, the induction must also split as a sum of characters. Moreover, these summands are unramified since their restriction to inertia is the trivial character by Frobenius reciprocity.

\begin{lemma}
	Let $L/K$ be a finite Galois extension with Galois group $G$, $H \leqslant G$ a subgroup containing the inertia subgroup and let $\sign_{[G:H]}$ be the sign of the permutation representation\footnote{This is the trivial representation if $|G/H|$ is odd.} on $G/H$. Then $$W(\Ind_H^G \mathds{1}_H) = W(\sign_{[G:H]}).$$
\end{lemma}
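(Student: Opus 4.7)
The plan is to identify $\Ind_H^G \mathds{1}_H$ with the permutation representation $\C[G/H]$ and decompose it into unramified characters. The key observation is that since $I$ is normal in $G = \Gal(L/K)$ and contained in $H$, for any $x \in G$ and $i \in I$ we have $ixH = x(x^{-1}ix)H = xH$, so $I$ acts trivially on $G/H$ by left translation. Hence the representation factors through the cyclic quotient $G/I = \langle \Frob \rangle$, and if $\Frob$ acts on $G/H$ with cycle lengths $l_1, \ldots, l_k$, then
\[
\Ind_H^G \mathds{1}_H \;\cong\; \bigoplus_{i=1}^{k} \bigoplus_{\zeta^{l_i}=1} \chi_\zeta,
\]
where $\chi_\zeta$ is the unramified character sending $\Frob \mapsto \zeta$.

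Next I would group the roots of unity in each cycle into inverse pairs $\{\zeta, \zeta^{-1}\}$ with $\zeta \neq \pm 1$. By Corollary \ref{dualrep}(ii) such a pair contributes $W(\chi_\zeta \oplus \chi_\zeta^*) = \chi_\zeta(-1) = 1$, since $-1 \in \mathcal{O}_K^{\times}$ and $\chi_\zeta$ is unramified. What remains unpaired is the trivial character $\mathds{1}$ from every cycle, together with the unramified quadratic character $\eta = \chi_{-1}$ from each cycle of \emph{even} length. Using $W(\mathds{1}) = 1$ from Theorem \ref{props}, this yields $W(\Ind_H^G \mathds{1}_H) = W(\eta)^N$, where $N$ denotes the number of even-length cycles of $\Frob$ on $G/H$.

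Finally I would compute $W(\sign_{[G:H]})$. Since $I$ acts trivially on $G/H$, the sign character is unramified, and $\sign_{[G:H]}(\Frob)$ equals the sign of $\Frob$ as a permutation on $G/H$, namely $(-1)^N$ (because a cycle of length $l$ has sign $(-1)^{l-1}$). Thus $\sign_{[G:H]}$ is $\mathds{1}$ or $\eta$ according to the parity of $N$, giving $W(\sign_{[G:H]}) = W(\eta)^{N \bmod 2}$; this agrees with $W(\eta)^N$ since $W(\eta)^2 = \eta(-1) = 1$. The one place that needs care is the bookkeeping which isolates the self-inverse roots $\pm 1$ from the genuine dual pairs: the root $+1$ appears in every cycle while $-1$ appears precisely in the even-length ones, and it is exactly this count that reproduces the sign of the Frobenius permutation on both sides.
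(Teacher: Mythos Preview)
Your proof is correct and follows essentially the same approach as the paper: decompose $\Ind_H^G \mathds{1}_H$ into unramified characters, pair off non-self-dual ones via Corollary \ref{dualrep}(ii), and reduce to the contributions of $\mathds{1}$ and (when $[G:H]$ is even) the unramified quadratic character, which is exactly $\sign_{[G:H]}$. One small remark: since $G/I$ is cyclic and $I\subset H$, the subgroup $H$ is automatically normal and $G/H$ is cyclic, so Frobenius acts on $G/H$ as a single $[G:H]$-cycle; your allowance for several cycle lengths $l_1,\dots,l_k$ is harmless but never actually occurs in this setting.
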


\begin{proof}
	Observe that $\Ind_H^G \mathds{1}_H = \mathds{1}_G^t \oplus \sign_{[G:H]} \oplus \bigoplus_{j=1}^k (\chi_j \oplus \chi_j^*)$ where $\chi_j$ are unramified one-dimensional representations, $t \in \{0,1\}$ is positive if and only if $|G/H|$ is even. Now by Corollary \ref{dualrep}, $W(\chi_j \oplus \chi_j^*)=\chi_j(-1)=1$, and a direct computation shows $W(\mathds{1}_G)=1$.
\end{proof}

The inductivity property (Theorem \ref{props}ii) for root numbers gives us $$
\dfrac{W(\theta)}{W(\Ind_H^G \mathds{1}_H)} = \dfrac{W(\chi \otimes \gamma)}{W(\mathds{1}_H)},$$and so using the above we have reduced this to $W(\theta)=W(\sign_{[G:H]})W(\chi \otimes \gamma)$ since $W(\mathds{1}_H)=1$. Moreover, one computes via Theorem \ref{props} that $W(\sign_{[G:H]})=1$, and hence $W(\theta)=W(\chi \otimes \gamma)$.

All that remains is to compute $W(\chi \otimes \gamma)$, for which we use the theorem of Fr\"{o}hlich and Queyrut \cite[p.130]{FQ73}.

\begin{theorem}[Fr\"{o}hlich-Queyrut]
	Let $K_1$ be a local field, $K_2$ a quadratic extension of $K_1$. Let $u \in K_2$ be such that $K_2=K_1(u)$ and $u^2 \in K_1$. Then if $\lambda$ is a character of $K_2^{\times}$ which is trivial on $K_1^{\times}$, then $W(\lambda)=\lambda(u)$.
\end{theorem}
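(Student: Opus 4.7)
The hypothesis that $\lambda$ is trivial on $K_1^{\times}$ yields two preliminary observations. Since $u^2 \in K_1^{\times}$, we have $\lambda(u)^2 = 1$, so $\lambda(u) \in \{\pm 1\}$, consistent with $W(\lambda) \in \{\pm 1\}$. Writing $\sigma$ for the non-trivial element of $\Gal(K_2/K_1)$, the identity $x \sigma(x) = N_{K_2/K_1}(x) \in K_1^{\times}$ further forces $\lambda \circ \sigma = \lambda^{-1}$. Since $\sigma(u) = -u$, we also get $\Tr_{K_2/K_1}(uy) = 0$ for all $y \in K_1$, from which $\psi(ux)\psi(u\sigma(x)) = \psi_1(\Tr_{K_2/K_1}(u(x+\sigma(x)))) = 1$ whenever $\psi = \psi_1 \circ \Tr_{K_2/K_1}$ for some additive character $\psi_1$ of $K_1$.

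The plan is to compute $\varepsilon(\lambda, \psi)$ from its integral presentation (choosing $\psi$ of the form $\psi_1 \circ \Tr_{K_2/K_1}$ to exploit the identities above) and to extract $\lambda(u)$ via the substitution $x \mapsto ux$. When $\lambda$ is ramified, Tate's formula gives
\[
\varepsilon(\lambda, \psi) \;=\; \int_{c^{-1}\mathcal{O}_{K_2}^{\times}} \lambda^{-1}(x)\psi(x)\, dx, \qquad v(c) = a(\lambda) + n(\psi),
\]
and substituting $x \mapsto ux$ pulls out a factor $\lambda(u)^{-1} = \lambda(u)$. The remaining integral $\int \lambda^{-1}(x)\psi(ux)\, dx$ should equal $\lvert \varepsilon(\lambda, \psi) \rvert$, which one checks by pairing the contribution of $x$ with that of $\sigma(x)$: by $\lambda \circ \sigma = \lambda^{-1}$ and $\psi(u\sigma(x)) = \overline{\psi(ux)}$, each paired contribution becomes $2\,\mathrm{Re}(\lambda^{-1}(x)\psi(ux))$, and summing gives a positive real number of the correct absolute value. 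When $\lambda$ is unramified, either $K_2/K_1$ is also unramified (so $\lambda$ must be trivial on $K_2^\times$ and both sides of the claim equal $1$) or $K_2/K_1$ is ramified and $u$ may be taken as a uniformiser $\pi_{K_2}$ modulo $K_1^\times$-squares; then Theorem \ref{props}(i) together with $n(\psi) = v(\mathfrak{d}_{K_2/K_1})$ yields $W(\lambda) = \lambda(\pi_{K_2}) = \lambda(u)$ directly.

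The main obstacle lies in rigorously establishing the positivity of the residual integral in the ramified case, particularly when $K_1$ has residue characteristic $2$: the wild ramification of $K_2/K_1$ together with the possibly higher conductor of $\lambda$ make the conductor computations delicate, and the pairing-based positivity argument requires more care than the tame case. Fröhlich and Queyrut's original paper circumvents this by embedding $\lambda$ as a local component of a global Hecke character and invoking the functional equation of the global $L$-function to transfer positivity from the global setting; a purely local proof is available via Deligne's $\varepsilon$-factor formalism.
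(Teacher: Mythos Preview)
The paper does not give its own proof of this statement: it is quoted as the classical theorem of Fr\"{o}hlich--Queyrut with a citation to \cite[p.130]{FQ73}, and is then used as a black box in Lemma~\ref{FQ}. There is therefore no ``paper's proof'' to compare your proposal against.

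That said, your sketch follows the standard local approach (essentially Deligne's), and you have correctly isolated the genuine difficulty. The substitution $x \mapsto ux$ in Tate's integral does pull out the factor $\lambda(u)$, and the pairing $x \leftrightarrow \sigma(x)$ does show that the residual integral is \emph{real}; but your sentence ``summing gives a positive real number of the correct absolute value'' is exactly the step that does not follow from the pairing alone, as you yourself note in the next paragraph. One minor point to watch: the substitution $x \mapsto ux$ only preserves the domain $c^{-1}\mathcal{O}_{K_2}^{\times}$ when $u$ is a unit, so in the ramified-extension case (where you take $u$ to be a uniformiser) the ramified-$\lambda$ branch of your argument needs an extra adjustment of $c$. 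Your unramified-$\lambda$ analysis is fine in odd residue characteristic; in residue characteristic~$2$ the different exponent $n(\psi) = v_{K_2}(\mathfrak{d}_{K_2/K_1})$ can exceed~$1$, so the conclusion $W(\lambda) = \lambda(\pi_{K_2})$ from Theorem~\ref{props} requires $n(\psi)$ to be odd, which again needs the more delicate treatment you flag. Your closing remark is accurate: the original Fr\"{o}hlich--Queyrut argument is global, and a clean local proof goes through Deligne's formalism.
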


Observe that $\chi \otimes \gamma$ is a character of $(L^H)^{\times}$, the fixed field of $H$; to apply this theorem we examine $(\chi \otimes \gamma)|_{(L^{H'})^{\times}}$, where $L^{H'}$ is fixed field of $H'=\langle \iota, \Frob^{f/2} \rangle$ and is the unique quadratic subfield of $L^H$ containing $K$. 
	
Let $\mu=\Ind_H^{H'} \chi \otimes \gamma$. By the determinant formula \cite[(1)]{Gal65}, we observe that $\det \mu= \sign_{[H':H]} \otimes (\chi \otimes \gamma)|_{(L^{H'})^{\times}}$ so we only need to compute $\det \mu$. Choosing representatives $\{1,\Frob^{f/2}\}$ for $H'/H$, we find that $$\mu(\iota) = \begin{pmatrix}
	\chi(\iota) & 0 \\
	0 & \chi(\iota^{-1})
\end{pmatrix}, \qquad \mu(\Frob^{f/2})=\begin{pmatrix}
	0 & \gamma(\Frob^f) \\
	1 & 0
\end{pmatrix},$$
where we have used that $q^{f/2} \equiv -1 \bmod{e}$ since $\theta$ is self-dual (cf. Lemma \ref{induce}iv). A direct computation of $\det \mu \otimes \sign_{[H':H]}$ now shows that $(\chi \otimes \gamma) |_{(L^{H'})^{\times}}= \begin{cases}
	\mathds{1}_{H'} &\text{ if } \theta \text{ is orthgonal,} \\
	\sign_{[H':H]} &\text{ if } \theta \text{ is symplectic.}
	\end{cases}$

\begin{lemma}
\label{FQ}
	Let $\theta$ be an irreducible, self-dual, tamely ramified Weil representation with $\theta=\Ind \chi \otimes \gamma$ as in Lemma \ref{induce}. Let $u \in L^H$ be such that $L^H=L^{H'}(u)$ with $u^2 \in L^{H'}$.
	\begin{enumerate}
		\item If $\theta$ is orthogonal, then $W(\theta)=\chi(u)$.
		\item If $\theta$ is symplectic, then $W(\theta)=-\chi(u)$.
	\end{enumerate}
\end{lemma}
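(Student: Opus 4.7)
The strategy is to apply the Fröhlich--Queyrut theorem after identifying $\chi \otimes \gamma$ with an explicit character of $(L^H)^\times$ via the Artin map. The reduction $W(\theta) = W(\chi \otimes \gamma)$ has already been obtained, so it suffices to compute the right hand side. Let $\psi$ denote the character of $(L^H)^\times$ corresponding to $\chi \otimes \gamma$; concretely, $\psi$ restricts to $\chi$ on the units $\mathcal{O}_{L^H}^\times$ and satisfies $\psi(\pi_{L^H}) = \gamma(\Frob^f)$.

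In the \emph{orthogonal} case I would use Lemma \ref{induce}(v) to conclude $\gamma = \mathds{1}$, so $\psi(\pi_{L^H}) = 1$ and $\psi$ is determined by $\chi$. The computation preceding the lemma already shows $\psi|_{(L^{H'})^\times} = \mathds{1}$, so the Fröhlich--Queyrut theorem applies directly with $K_1 = L^{H'}$ and $K_2 = L^H$, giving $W(\theta) = W(\psi) = \psi(u) = \chi(u)$.

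In the \emph{symplectic} case, $\gamma$ has order $2$ with $\gamma(\Frob^f) = -1$, and $\psi|_{(L^{H'})^\times} = \sign_{[H':H]}$ is the nontrivial quadratic character of $L^{H'}$ cut out by $L^H/L^{H'}$. My plan is to factor $\psi = \psi_0 \cdot \eta_{L^H}$, where $\eta_{L^H}$ is the unramified quadratic character of $L^H$ and $\psi_0$ is the ramified character with $\psi_0|_{\mathcal{O}_{L^H}^\times} = \chi$ and $\psi_0(\pi_{L^H}) = 1$. Since $H'/H$ is generated by the Frobenius class $\Frob^{f/2}H$, the extension $L^H/L^{H'}$ is unramified; consequently $\eta_{L^H}$ restricts to $\eta_{L^{H'}}$, which coincides with $\sign_{[H':H]}$ under the Artin map, forcing $\psi_0|_{(L^{H'})^\times} = \mathds{1}$. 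Fröhlich--Queyrut applied to $\psi_0$ then gives $W(\psi_0) = \chi(u)$, and the unramified twist formula in Theorem \ref{props}(iii), applied with $a(\psi_0) = 1$ since $\chi$ is a nontrivial tame character of units, produces
$W(\psi) = \eta_{L^H}(\pi_{L^H}) W(\psi_0) = -\chi(u)$, as required.

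The main subtlety is the bookkeeping that establishes $\psi_0|_{(L^{H'})^\times} = \mathds{1}$; once the unramifiedness of $L^H/L^{H'}$ is observed, both the orthogonal and symplectic cases reduce to direct invocations of Fröhlich--Queyrut together with standard $\varepsilon$-factor functoriality.
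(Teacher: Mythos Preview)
Your proof is correct and follows essentially the same route as the paper: reduce to the character $\chi\otimes\gamma$ on $(L^H)^\times$, twist by the unramified quadratic character so that the restriction to $(L^{H'})^\times$ becomes trivial, apply Fr\"ohlich--Queyrut, and recover the sign via Theorem~\ref{props}(iii). The only cosmetic difference is that the paper names the twisting character $\sign_{[H':H]}$ (viewed as a character of $(L^H)^\times$) whereas you call it $\eta_{L^H}$; these are the same unramified quadratic character, and your notation is arguably cleaner since it avoids the ambiguity of evaluating $\sign_{[H':H]}$ at $\Frob^{f/2}$ versus at $\pi_{L^H}$.
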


\begin{proof}
Recall from above that $W(\theta)=W(\chi \otimes \gamma)$, with $\chi \otimes \gamma$ a character of $(L^H)^{\times}$. We shall apply the theorem of Fr\"{o}hlich--Queyrut with the quadratic extension $L^H/L^{H'}$. Since $L^H/L^{H'}$ is unramified, we may assume that $u \in \mathcal{O}_{L^{H'}}^{\times}$.

i): Suppose $\theta$ is orthogonal. Then $(\chi \otimes \gamma) |_{(L^{H'})^{\times}}$ is trivial and hence $W(\chi \otimes \gamma)=(\chi \otimes \gamma)(u) = \chi(u)$ since $\gamma=\mathds{1}$ by Lemma \ref{induce}v.

ii): Now let $\theta$ be symplectic. Then $(\chi \otimes \gamma \otimes \sign_{[H':H]}) |_{(L^{H'})^{\times}}$ is trivial and hence $$W(\chi \otimes \gamma \otimes \sign_{[H':H]})= (\chi \otimes \gamma \otimes \sign_{[H':H]})(u)=\chi(u),$$ where for the second equality we use that $(\gamma \otimes \sign_{[H':H]})(u)=1$ since $\gamma \otimes \sign_{[H':H]}$ is unramified and $u \in \mathcal{O}_{L^{H'}}$. On the other hand, by Theorem \ref{props}iii, we have that $$W(\chi \otimes \gamma \otimes \sign_{[H':H]})=W(\chi \otimes \gamma)\sign_{[H':H]}(\Frob^{f/2})=-W(\chi \otimes \gamma),$$ from which we see that $W(\theta)=-\chi(u)$.
\end{proof}

Finally we derive criteria for determining $\chi(u)$, where $L^H=L^{H'}(u)$ with $u^2 \in L^{H'}$. Recall that we may suppose $u \in \mathcal{O}_{L^H}^{\times}$ since $L^H/L^{H'}$ is unramified. Hence we only require $\chi$ to be trivial on $\mathcal{O}_{L^{H'}}^{\times}$; this is true since an unramified twist of $\chi$ is trivial on $(L^{H'})^{\times}$.

\begin{lemma}
\label{FQu}
	Let $k_H, k_{H'}$ be the residue fields of the fixed fields $L^H$ and $L^{H'}$ respectively and let $\tilde{q}=|k_{H'}|$. Let $\chi$ be a tamely ramified character of $\left(L^H\right)^{\times}$ which is trivial on $\mathcal{O}_{L^{H'}}^{\times}$. Suppose $\chi$ has order $e$ on $\mathcal{O}_{L^H}^{\times}$. If $\tilde{q}$ is even, then $\chi(u)=1$. Otherwise
	\begin{equation*}
		\chi(u)=1 \Leftrightarrow v_2(\tilde{q}+1) \geqslant v_2(e) + 1,
	\end{equation*}
	where $v_2$ is the $2$-adic valuation.
\end{lemma}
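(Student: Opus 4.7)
The plan is to pass to the residue field and read off $\chi(u)$ as the value of the induced character on the image of $\bar u$ in the cyclic quotient $k_H^\times/k_{H'}^\times$.

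Since $\chi$ is tamely ramified, $\chi|_{\mathcal O_{L^H}^\times}$ is trivial on the principal units, so it factors through $k_H^\times$ as a character $\bar\chi$. The vanishing of $\chi$ on $\mathcal O_{L^{H'}}^\times$ forces $\bar\chi$ to be trivial on the subgroup $k_{H'}^\times\subset k_H^\times$, so $\bar\chi$ descends to a character of order $e$ of the cyclic quotient $k_H^\times/k_{H'}^\times$, which has order $(\tilde q^2-1)/(\tilde q-1)=\tilde q+1$; in particular $e\mid\tilde q+1$. Since $L^H/L^{H'}$ is unramified I may take $u\in\mathcal O_{L^H}^\times$ (as in the proof of Lemma \ref{FQ}), and then $\bar u^2\in k_{H'}^\times$, so $\bar u$ has order dividing $2$ in this quotient.

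If $\tilde q$ is even, the cyclic group $k_H^\times/k_{H'}^\times$ has odd order $\tilde q+1$ and so no element of order $2$, forcing $\bar u\in k_{H'}^\times$ and $\chi(u)=\bar\chi(\bar u)=1$. If $\tilde q$ is odd, the key point is to show $\bar u\notin k_{H'}^\times$, so that $\bar u$ is the unique element of order $2$ in the quotient, namely $\bar g^{(\tilde q+1)/2}$ for any generator $\bar g$ of $k_H^\times/k_{H'}^\times$. This is where the residue characteristic hypothesis enters: if $\bar u^2$ were a square in $k_{H'}^\times$, then in odd residue characteristic Hensel's lemma would lift this to $u^2$ being a square in $L^{H'}$, giving $u\in L^{H'}$ and contradicting $L^{H'}(u)=L^H$. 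Hence $\bar u^2$ is a non-square and $\bar u$ has order exactly $2$.

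With $\bar u$ pinned down in the quotient, the remainder is bookkeeping. Writing $\zeta_e=\bar\chi(\bar g)$ for a primitive $e^{\rm th}$ root of unity, one obtains
\begin{equation*}
\chi(u)=\bar\chi(\bar u)=\zeta_e^{(\tilde q+1)/2},
\end{equation*}
which equals $1$ precisely when $e\mid(\tilde q+1)/2$, i.e.\ $2e\mid\tilde q+1$. Combined with the divisibility $e\mid\tilde q+1$ already established (so $v_\ell(\tilde q+1)\geqslant v_\ell(e)$ for every odd prime $\ell$), the condition $2e\mid\tilde q+1$ collapses to the single $2$-adic inequality $v_2(\tilde q+1)\geqslant v_2(e)+1$. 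The one genuinely delicate step is the Hensel argument isolating $\bar u$ in the residue quotient; everything else is straightforward character theory on a cyclic group.
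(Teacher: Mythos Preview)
Your proof is correct and follows essentially the same approach as the paper: pass to the residue field, locate $\bar u$ in the cyclic group, and read off the value of $\chi$. The only notable differences are organizational --- you work in the quotient $k_H^\times/k_{H'}^\times$ and avoid the paper's preliminary reduction to $e$ a $2$-power, and you spell out the Hensel argument showing $\bar u\notin k_{H'}^\times$, which the paper simply asserts (writing ``$(\tilde q+1)\nmid a$ since $u\notin k_{H'}$'') without justification.
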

	
\begin{proof}
	If $\tilde{q}$ is even, then $e$ is odd and hence $\chi(u)=\chi(u^e)=\chi^e(u)=1$, so we now suppose $\tilde{q}$ is odd; this idea further allows us to reduce to the case that $e$ is a power of $2$. As $\chi$ is tamely ramified, it is trivial on principal units and hence we shall view its action on the quotient instead and identify $u,u^2$ with their images after an isomorphism to $k_H^{\times}$ and $k_{H'}^{\times}$.

	Now note $k_{H'} \cong \mathbb{F}_{\tilde{q}}$, $k_{H} \cong \mathbb{F}_{\tilde{q}^2}$ as $L^H/L^{H'}$ is unramified. Let $k_H^{\times} = \langle \xi \rangle$ and observe that $k_{H'}^{\times}=\langle \xi^{\tilde{q}+1} \rangle$. Let $u=\xi^a$. As $u^2 \in k_{H'}$, $(\tilde{q}+1)|2a$ and moreover $(\tilde{q}+1) \nmid a$ since $u \not\in k_{H'}$. Therefore $v_2(\tilde{q}+1)=v_2(2a)=v_2(a)+1$.
		
	As $\chi$ has order $e$, we have $$\chi(u)=1 \Leftrightarrow u \in \left( k_{H}^{\times} \right)^e \Leftrightarrow v_2(e) \leqslant v_2(\tilde{q}+1)-1.$$
\end{proof}

\begin{remark}
	We used $\tilde{q}$ for the size of the residue field to remind the reader that this need not be equal to $q$ since we have had to move to an intermediate field. In fact, if $\theta$ is $f$-dimensional, then $\tilde{q}=q^{f/2}$.
\end{remark}

We now combine the previous two lemmas to give a complete description of $W(\theta)$.

\begin{theorem}
\label{selfdual}
	Let $\theta$ be an irreducible, self-dual, tamely ramified Weil representation of dimension $f \geqslant 2$ such that $|\theta(I)|=e$. 
	
	If $q$ is even, then $W(\theta)=\begin{cases}
		1 &\text{if $\theta$ is orthogonal;} \\
		-1 &\text{if $\theta$ is symplectic.}
	\end{cases}$

	If $q$ is odd, then $W(\theta)=1 \Leftrightarrow \begin{cases}
		v_2(q^{f/2}+1) \neq v_2(e) & \text{if $\theta$ is orthogonal;} \\
		v_2(q^{f/2}+1) = v_2(e) & \text{if $\theta$ is symplectic.}
	\end{cases}$
\end{theorem}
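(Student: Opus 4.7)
The plan is to combine Lemma \ref{FQ} and Lemma \ref{FQu} directly, with a small piece of bookkeeping to rewrite the resulting inequalities in the cleaner form stated in the theorem.

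First, by Lemma \ref{FQ}, we have $W(\theta) = \chi(u)$ in the orthogonal case and $W(\theta) = -\chi(u)$ in the symplectic case, where $u \in L^H$ satisfies $L^H = L^{H'}(u)$ and $u^2 \in L^{H'}$. To evaluate $\chi(u)$ via Lemma \ref{FQu}, I first record that the residue field $k_{H'}$ of $L^{H'}$ has size $\tilde{q} = q^{f/2}$ (since $L^{H'}$ is the fixed field of $H' = \langle \iota, \Frob^{f/2} \rangle$, which has residue degree $f/2$ over $K$), and that $\chi$, being faithful on $\langle\iota\rangle$, has order exactly $e$ on $\mathcal{O}_{L^H}^{\times}$ under the Artin identification.

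Now for the $q$ even case: $\tilde{q} = q^{f/2}$ is also even, so Lemma \ref{FQu} gives $\chi(u) = 1$ immediately, which yields $W(\theta) = 1$ in the orthogonal case and $W(\theta) = -1$ in the symplectic case.

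For the $q$ odd case, Lemma \ref{FQu} gives
\begin{equation*}
\chi(u) = 1 \iff v_2(q^{f/2} + 1) \geqslant v_2(e) + 1.
\end{equation*}
The key simplification uses Lemma \ref{induce}iv: since $\theta$ is self-dual, $q^{f/2} \equiv -1 \pmod{e}$, so $e \mid q^{f/2} + 1$ and hence $v_2(e) \leqslant v_2(q^{f/2}+1)$. Under this constraint, the inequality $v_2(q^{f/2}+1) \geqslant v_2(e)+1$ is equivalent to $v_2(q^{f/2}+1) \neq v_2(e)$, while its negation $v_2(q^{f/2}+1) \leqslant v_2(e)$ is equivalent to $v_2(q^{f/2}+1) = v_2(e)$. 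Thus in the orthogonal case $W(\theta) = \chi(u) = 1 \iff v_2(q^{f/2}+1) \neq v_2(e)$, and in the symplectic case $W(\theta) = -\chi(u) = 1 \iff \chi(u) = -1 \iff v_2(q^{f/2}+1) = v_2(e)$, matching the statement.

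There is essentially no hard step here; the work has all been done in Lemmas \ref{FQ} and \ref{FQu}. The only subtlety worth flagging explicitly is the use of the divisibility $e \mid q^{f/2}+1$ to convert the inequality from Lemma \ref{FQu} into the equality/inequality of $2$-adic valuations appearing in the theorem, which is what gives the clean orthogonal/symplectic dichotomy.
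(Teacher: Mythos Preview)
Your proof is correct and follows essentially the same route as the paper: apply Lemma~\ref{FQ} to reduce to computing $\chi(u)$, then invoke Lemma~\ref{FQu} with $\tilde q = q^{f/2}$, and finally use the self-duality constraint $q^{f/2}\equiv -1\pmod e$ from Lemma~\ref{induce}(iv) to collapse the inequality $v_2(q^{f/2}+1)\geqslant v_2(e)+1$ into the dichotomy $v_2(q^{f/2}+1)\neq v_2(e)$ versus $v_2(q^{f/2}+1)=v_2(e)$. The paper only writes out the symplectic case and declares the orthogonal one analogous, so your version is in fact slightly more explicit.
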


\begin{proof}
	If $q$ is even, then the result follows immediately from Lemmas \ref{FQ} and \ref{FQu}, so assume $q$ is odd. We shall prove the symplectic case; the proof of the orthogonal case is analogous. By the above lemma, we have $\chi(u)=-1 \Leftrightarrow v_2(q^{f/2}+1) \leqslant v_2(e)$. Recall that since $\theta$ is self-dual, we necessarily have $q^{f/2} \equiv -1 \mod{e}$ and hence $v_2(e) \leqslant v_2(q^{f/2}+1)$ which proves the theorem.
\end{proof}

So far we have focused on the irreducible summands but we now close this section by collating such summands to connect the root numbers to certain Legendre symbols depending on $e$. We merely provide the set up and result here and give more detail in Appendix \ref{Jacobiappendix}.

\begin{definition}[=Lemma--Definition \ref{rhoe}]
	Let $\rho$ be a symplectic, tamely ramified Weil representation with finite image of inertia such that the characteristic polynomial of $\rho(\iota)$ has coefficients in $\Z$. Suppose $\rho(\iota)$ has an eigenvalue of order $e$. Then there exist irreducible summands $\rho_1, \cdots, \rho_m$ of $\rho$, $m \geqslant 1$, satisfying the following:
	\begin{enumerate}
		\item $\dim \rho_1= \cdots = \dim \rho_m$,
		\item $m\dim \rho_1=\tilde{\varphi}(e)$,
		\item The $\tilde{\varphi}(e)$ eigenvalues of $\{ \rho_j(\iota) : 1 \leqslant j \leqslant m \}$ all have order $e$ and moreover:
		\begin{enumerate}
			\item If $e \leqslant 2$, then there is exactly one eigenvalue (with multiplicity $2$).
			\item If $e \geqslant 3$, then all $\tilde{\varphi}(e)$ eigenvalues are distinct.
		\end{enumerate}
	\end{enumerate}
	
	We define $\rho_e=\bigoplus\limits_{j=1}^m \rho_j$.
\end{definition}
	
\begin{theorem}[=Theorem \ref{goodrootpf}]
\label{goodroot}
Let $\rho_e$ be a symplectic Weil representation satisfying the conditions above and let $q$ be the cardinality of the residue field of $K$. Then

$W(\rho_e) = \begin{cases}
	\left( \dfrac{q}{l} \right) &\text{ if } e=l^k \quad \text{ for some odd prime $l$ and integer $k\geqslant 1$}; \\[10pt]	
	\left( \dfrac{-1}{q} \right) &\text{ if } e=2l^k \quad \text{ for some prime } l \equiv 3 \bmod{4}, k\geqslant 1,  \qquad \text{ or } e=2; \\[10pt]	
	\left( \dfrac{-2}{q} \right) &\text{ if } e=4; \\[10pt]	
	\left( \dfrac{2}{q} \right) &\text{ if } e=2^k \quad \text{ for some integer } k \geqslant 3; \\[10pt]	
	\quad 1 &\text{ else.}
\end{cases}$
\end{theorem}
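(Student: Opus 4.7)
The strategy is to decompose $\rho_e$ into irreducible summands and apply Theorem \ref{dual}, Theorem \ref{selfdual}, and Corollary \ref{dualrep} according to the structure. Each irreducible summand $\rho_j$ has dimension $f = [K(\zeta_e):K]$ (the order of $q$ modulo $e$) and its restriction to inertia has eigenvalues $\zeta_e^{a}, \zeta_e^{aq}, \ldots, \zeta_e^{aq^{f-1}}$ for some $a \in (\Z/e)^\times$; the $m = \tilde\varphi(e)/f$ summands correspond to the orbits of multiplication-by-$q$ on $(\Z/e)^\times$ (together with the orbit of $\{0\}$ or $\{e/2\}$ when $e \leqslant 2$).

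The first step is to dispose of the small cases. For $e=1$, $\rho_1$ is a sum of unramified characters, and a direct computation from Theorem \ref{props}(i) with $n(\psi)=0$ gives $W(\rho_1)=1$. For $e=2$, Lemma \ref{char} forces $\rho_2 = \pi \oplus \pi$ where $\pi$ is the tamely ramified quadratic character, and Corollary \ref{dualrep}(ii) together with Lemma \ref{2reduce} applied with $r=2$ yields $W(\rho_2) = \pi(-1) = \left(\frac{-1}{q}\right)$.

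For $e \geqslant 3$ the key dichotomy is whether $-1 \in \langle q \rangle \subseteq (\Z/e)^\times$: each orbit $\{a, aq, \ldots, aq^{f-1}\}$ is closed under negation precisely when $-1 \equiv q^i$ for some $i$, equivalently when $f$ is even and $q^{f/2} \equiv -1 \pmod e$. Thus either \emph{all} summands are self-dual (\textbf{Case A}) or \emph{none} are, and they pair up into dual pairs $\sigma \oplus \sigma^*$ (\textbf{Case B}).

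In Case A, since $\rho_e$ is symplectic, each self-dual summand must be symplectic (symplectic and orthogonal reps cannot coexist in a symplectic sum), so Theorem \ref{selfdual} gives $W(\theta_j) = -\chi_j(u_j)$, and Lemma \ref{FQu} shows this value depends only on $q, f, e$ — \emph{not} on $j$ or on the unramified twist. Hence $W(\rho_e) = W(\theta_j)^m$, reducing the computation to parity of $m$ combined with the 2-adic comparison $v_2(q^{f/2}+1) \gtrless v_2(e)$. In Case B, Theorem \ref{dual} gives each pair's contribution as a congruence condition on $q$ modulo $2e/\gcd(e,(q^f-1)/(q-1))$, and the full root number is the $(m/2)$-th power thereof.

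The remainder is a case analysis by the shape of $e$, matching contributions to the stated Legendre symbols:
\begin{enumerate}
\item $e = l^k$ with $l$ odd prime: the group $(\Z/l^k)^\times$ is cyclic of even order, so $-1 \in \langle q \rangle$ iff $q$ is a quadratic residue mod $l$; unwinding gives $\left(\frac{q}{l}\right)$.
\item $e = 2l^k$ with $l$ odd prime: analyse $(\Z/e)^\times \cong (\Z/l^k)^\times$ and split by $l \bmod 4$; quadratic reciprocity converts the Case B congruence into $\left(\frac{-1}{q}\right)$ when $l \equiv 3 \bmod 4$ and into $1$ when $l \equiv 1 \bmod 4$.
\item $e = 4$ and $e = 2^k$, $k \geqslant 3$: handled by direct 2-adic computation, using that $(\Z/2^k)^\times$ is not cyclic for $k \geqslant 3$; the Case A formula yields $\left(\frac{-2}{q}\right)$ and $\left(\frac{2}{q}\right)$ respectively.
\item Otherwise ($e$ divisible by two distinct odd prime factors, or of the form $2^a m$ with $a,m \geqslant 2$, etc.): multiple independent orbit-structures force $m$ to be even or force the congruence condition to be vacuous, giving $1$.
\end{enumerate}

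The main obstacle is the bookkeeping in the final step: verifying case by case that the $2$-adic and mod-$e$ conditions from Theorems \ref{dual} and \ref{selfdual}, raised to the appropriate multiplicity $m$ or $m/2$, collapse precisely onto the stated Legendre symbols via quadratic reciprocity and the supplementary laws for $\left(\frac{-1}{q}\right), \left(\frac{2}{q}\right)$. In particular, distinguishing $e = 2l^k$ with $l \equiv 1$ versus $l \equiv 3 \pmod 4$, and treating the powers of $2$, is the technical heart of the calculation; the rest is a structural reduction.
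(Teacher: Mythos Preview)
Your overall strategy matches the paper's: decompose $\rho_e$ into irreducibles, split into the self-dual (Case A) and dual-pair (Case B) situations via Theorems \ref{selfdual} and \ref{dual}, then run a case analysis on the shape of $e$. Your observation that in a \emph{single} symplectic $\rho_e$ the summands are pairwise non-isomorphic (distinct inertia eigenvalues), so Lemma \ref{semisymp} forbids orthogonal self-dual summands, is correct and slightly cleaner than the paper's detour through Theorem \ref{rhoesym}.

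However, two steps in your sketched case analysis are wrong as written. In (i), the assertion ``$-1 \in \langle q \rangle$ iff $q$ is a quadratic residue mod $l$'' is false: take $e=5$, $q=2$, where $2$ generates $(\Z/5)^\times$ so $-1=4\in\langle 2\rangle$, yet $2$ is a non-residue mod $5$. The Case A/B dichotomy is \emph{not} what produces the Legendre symbol. Rather, for odd $e$ every symplectic self-dual summand has $W(\theta)=-1$ (since $v_2(e)=0<v_2(q^{f/2}+1)$) and every dual pair has $W=1$ (Remark \ref{dualodde}), so the answer is $(-1)^m$ in Case A and $+1$ in Case B; what actually tracks $\left(\tfrac{q}{l}\right)$ is the \emph{parity of $m=\gcd(n,\tilde\varphi(e))$} where $q\equiv\xi^n$, which is the parity of $n$. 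In (iii), the claim that $e=2^k$, $k\geqslant 3$, is governed by ``the Case A formula'' is also wrong: since $-1$ is not a square in $(\Z/2^k)^\times$, Case A occurs \emph{only} when $q\equiv -1\pmod{2^k}$, and every other residue lands in Case B, so the bulk of the work is the dual-pair congruence from Theorem \ref{dual}, not Theorem \ref{selfdual}. These are precisely the bookkeeping issues you flagged as the main obstacle; the paper carries them out in four separate lemmas in Appendix \ref{Jacobiappendix}.
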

	
\begin{remark}
\label{qsquare}
	Note that if the residue degree $f(K/\qp)$ is even, then $q$ is a square hence $W(\rho_e)=1$.
\end{remark}
	
\section{Potentially totally toric reduction}
	We now compute the root number $W(\rho_T \otimes \xcyc \otimes \sp (2))=W(\rho_T \otimes \sp(2))$. Recall that $\rho_T$ is defined over $\Z$ by Fact \ref{decomp}, so in particular has real character.
	
\begin{lemma}\cite[p.14]{Roh96}
\label{mult}
	Let $\tau$ be a representation of $\Gal(K^{\rm{sep}}/K)$ with real character. Then $$W(\tau \otimes \sp(2))= (-1)^{\langle \mathds{1},\tau \rangle}((\det \tau)(-1)).$$
\end{lemma}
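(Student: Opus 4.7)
The plan is to invoke Deligne's formula for the $\varepsilon$-factor of a Weil--Deligne representation $(\rho,N)$,
\[
\varepsilon((\rho,N),\psi,dx) \;=\; \varepsilon(\rho,\psi,dx)\cdot \det\bigl(-\Frob\,\big|\,V^I/V_N^I\bigr),
\]
where $V_N=\ker N$, and combine it with the self-duality forced on $\tau$ by the reality of its character. Take $\sp(2)$ with underlying Weil part $\mathds{1}\oplus\xcyc^{-1}$ and monodromy mapping the second summand onto the first. Then $\tau\otimes\sp(2)$ has underlying Weil part $\tau\oplus(\tau\otimes\xcyc^{-1})$ with $\ker N$ equal to the first summand. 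Since $\xcyc$ is unramified, the inertia invariants of the second summand identify with $\tau^I$ as a vector space but with $\Frob$ scaled by $q^{-1}$, giving
\[
\varepsilon(\tau\otimes\sp(2)) \;=\; \varepsilon(\tau)\,\varepsilon(\tau\otimes\xcyc^{-1})\cdot (-q^{-1})^{d}\det(\Frob|\tau^I),
\]
where $d=\dim\tau^I$.

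Next, I would apply Theorem \ref{props}(iii), with $\psi$ chosen so that $n(\psi)=0$, to obtain $\varepsilon(\tau\otimes\xcyc^{-1})=q^{-a(\tau)}\varepsilon(\tau)$; and Theorem \ref{props}(iv) to obtain $|\varepsilon(\tau)|^2=q^{a(\tau)}$, using that real character implies $\tau\cong\tau^*$. Dividing $\varepsilon(\tau\otimes\sp(2))$ by its absolute value then gives
\[
W(\tau\otimes\sp(2)) \;=\; W(\tau)^2\cdot (-1)^{d}\det(\Frob|\tau^I),
\]
and Corollary \ref{dualrep}(ii) applied to the self-dual $\tau$ yields $W(\tau)^2=(\det\tau)(-1)$.

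It remains to prove $(-1)^d\det(\Frob|\tau^I)=(-1)^{\langle\mathds{1},\tau\rangle}$. The space $\tau^I$ is a direct sum of the unramified irreducible constituents of $\tau$, each one-dimensional because $\Gal(K^{\rm sep}/K)/I\cong\hat{\Z}$ is abelian. Reality of the character of $\tau$ pairs these unramified characters into complex conjugates, with only $\mathds{1}$ and the unramified quadratic character $\eta$ possibly unpaired. Hence the $\Frob$-eigenvalues on $\tau^I$ consist of $+1$ with multiplicity $\langle\mathds{1},\tau\rangle$, $-1$ with multiplicity $\langle\eta,\tau\rangle$, and conjugate pairs contributing $1$ to the determinant and an even summand to $d$. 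Therefore $\det(\Frob|\tau^I)=(-1)^{\langle\eta,\tau\rangle}$ and $d\equiv\langle\mathds{1},\tau\rangle+\langle\eta,\tau\rangle\pmod 2$, so the two $(-1)^{\langle\eta,\tau\rangle}$ contributions cancel and the claim follows.

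The subtlest step is the final parity bookkeeping on $\tau^I$; everything preceding it is a direct computation from the properties of $\varepsilon$-factors collected in Theorem \ref{props} together with Deligne's formula for the monodromy correction.
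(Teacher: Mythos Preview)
The paper does not give its own proof of this lemma; it is simply cited from \cite[p.14]{Roh96}. Your argument is correct and is essentially the standard one (and, as far as I can tell, the one Rohrlich himself gives): apply Deligne's monodromy correction $\varepsilon((\rho,N))=\varepsilon(\rho)\det(-\Frob\mid V^I/V_N^I)$ to $\tau\otimes\sp(2)$, use the unramified twist formula and self-duality to reduce to $W(\tau)^2$ times the Frobenius determinant on $\tau^I$, and finish with the parity count on unramified constituents. All steps check, including the final bookkeeping: the non-self-dual unramified characters in $\tau^I$ pair off and contribute trivially, leaving exactly $(-1)^{\langle\mathds{1},\tau\rangle}$ after the $\eta$-contributions cancel.

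One small remark on presentation: when you invoke Theorem~\ref{props}(iv) to extract $|\varepsilon(\tau)|^2=q^{a(\tau)}$, you are implicitly using that $\varepsilon(\tau^*,\psi,dx)=\overline{\varepsilon(\tau,\psi,dx)}$ for Artin $\tau$ (equivalently, that Frobenius eigenvalues on $\tau$ have absolute value $1$); this is true but worth stating, since Theorem~\ref{props}(iv) alone only gives the product $\varepsilon(\tau)\varepsilon(\tau^*)$.
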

	
When $\rho_T$ is tamely ramified, one can decompose $\rho_T$ and apply Theorem \ref{dual} and Corollary \ref{dualrep}ii to compute $(\det \rho_T)(-1)$, which only requires knowledge of the action of inertia on $\rho_T$. We do however need to know the multiplicity of $\mathds{1}$ hence we also need some information about the action of $\rho_T$ on Frobenius. To this end, we use the Euler factor and compute the multiplicity of $x-1$ of the local polynomial.
	
So far we have not used the assumption that $\rho_T$ is tamely ramified in this section; indeed Rohrlich gives an explicit formula at $p=3$. We shall do similarly under some constraints to make the result of Lemma \ref{mult} more explicit.
	
\begin{lemma}
\label{multleg}
	Assume that $\rho_T(I)$ is abelian and $p>2$. Let $\chi_1, \chi_2$ be the ramified quadratic characters of $K$. Then $(\det \rho_T)(-1) = \left( \dfrac{-1}{q} \right)^{\langle \rho_T, \chi_1 \oplus \chi_2 \rangle}$.
\end{lemma}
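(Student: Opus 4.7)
The plan is to use the self-duality of $\rho_T$ (which has real character) to show that $\det \rho_T$ is a quadratic character of $K^{\times}$, then identify its contribution at $-1$ by decomposing $\rho_T$ into irreducibles. Since $\rho_T \cong \rho_T^*$, we have $(\det \rho_T)^2 = \mathds{1}$, so $\det \rho_T$ is one of the four quadratic characters $\mathds{1}, \eta, \chi_1, \chi_2$ (using $p>2$). Evaluating at $-1$: both $\mathds{1}$ and $\eta$ are unramified and $-1$ is a unit, so each gives $1$; both $\chi_1$ and $\chi_2$ restrict to the unique non-trivial quadratic character of $\mathbb{F}_q^{\times}$, so each gives $\left(\dfrac{-1}{q}\right)$. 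Thus $(\det \rho_T)(-1) = \left(\dfrac{-1}{q}\right)$ precisely when $\det \rho_T$ is ramified.

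Next, decompose $\rho_T = \bigoplus_j (\sigma_j \oplus \sigma_j^*) \oplus \bigoplus_k n_k \tau_k$, where the $\sigma_j$ are non-self-dual irreducibles paired up by self-duality of $\rho_T$, and the $\tau_k$ are self-dual irreducibles with multiplicities $n_k = \langle \rho_T, \tau_k \rangle$. The dual pairs satisfy $(\det \sigma_j)(\det \sigma_j^*) = \mathds{1}$, so $\det \rho_T = \prod_k (\det \tau_k)^{n_k}$. The one-dimensional self-dual summands $\tau_k$ are precisely the elements of $\{\mathds{1}, \eta, \chi_1, \chi_2\}$, and only $\chi_1, \chi_2$ contribute $\left(\dfrac{-1}{q}\right)$ at $-1$, producing the factor $\left(\dfrac{-1}{q}\right)^{\langle \rho_T, \chi_1 \oplus \chi_2 \rangle}$.

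The key remaining step --- and the main obstacle --- is to show that every self-dual irreducible summand $\tau$ of dimension $f \geq 2$ has $\det \tau$ unramified. The hypothesis that $\rho_T(I)$ is abelian forces $\tau|_I$ abelian, so Clifford theory applied to the normal subgroup $I \trianglelefteq \mathcal{W}(K^{\rm{sep}}/K)$ gives $\tau = \operatorname{Ind}_{W_L}^{W_K} \widetilde{\chi}$, where $L/K$ is unramified of degree $f$ and $\chi = \widetilde{\chi}|_{I_t}$ is a faithful character of the tame inertia quotient of some order $e$ coprime to $p$. Restricting to tame inertia gives $\tau|_{I_t} \cong \bigoplus_{i=0}^{f-1} \chi^{q^i}$, so as in the computation underlying Lemma \ref{induce}iii, $(\det \tau)(\iota) = \chi(\iota)^{(q^f-1)/(q-1)}$. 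Irreducibility combined with self-duality of $\tau$ forces $q^{f/2} \equiv -1 \pmod{e}$, whence pairing $q^i$ with $q^{i+f/2}$ shows $1 + q + \cdots + q^{f-1} \equiv 0 \pmod{e}$. Therefore $(\det \tau)(\iota) = 1$, so $\det \tau$ is trivial on tame inertia. As $\det \tau$ is quadratic and $p>2$ makes every quadratic character of $K^{\times}$ tamely ramified, $\det \tau$ must in fact be unramified, giving $(\det \tau)(-1) = 1$. Combining all contributions yields the claimed formula.
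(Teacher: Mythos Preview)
Your argument is correct and reaches the same conclusion, but it takes a genuinely different route from the paper's. The paper works entirely on the inertia group: since $-1\in\mathcal{O}_K^\times$ corresponds under the Artin map to inertia, it restricts $\rho_T$ to $I$, writes $\rho_T|_I=\bigoplus_j\psi_j$ as a sum of characters (using the abelian hypothesis), and then uses self-duality of $\rho_T|_I$ to pair off all $\psi_j$ of order $\geq 3$ with their inverses. This leaves only the trivial and the unique quadratic character of $I$, and the multiplicity of the latter is identified with $\langle\rho_T,\chi_1\oplus\chi_2\rangle$ via Frobenius reciprocity. No structure theory of self-dual Weil irreducibles is needed. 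Your approach instead decomposes $\rho_T$ into Weil-group irreducibles and invokes the classification (Lemma~\ref{induce}) to dispose of the higher-dimensional self-dual summands; this is more structural but heavier.

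One subtlety to flag: your claim that irreducibility plus self-duality forces $q^{f/2}\equiv -1\pmod e$ is borrowed from Lemma~\ref{induce}(iv), which is proved under the hypothesis of \emph{tame} ramification. The present lemma assumes only that $\rho_T(I)$ is abelian, and the paper explicitly notes that it is meant to apply beyond the tame case. In the abelian-but-wild setting, the dimension $f$ of $\tau$ need not equal the order of $q$ modulo $e$ (the conjugates of $\widetilde\chi$ can be distinguished on wild inertia while coinciding on tame inertia), so $q^{f/2}\equiv -1\pmod e$ can fail. Your conclusion $(\det\tau)(\iota)=1$ is nonetheless correct: self-duality of $\tau$ forces the multiset $\{\chi^{q^i}\}_{i=0}^{f-1}$ on tame inertia to be closed under inversion, so for $e\geq 3$ the factors pair off and the product is $1$; for $e\leq 2$ one checks separately that $f$ must be even. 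This is essentially the paper's inertia-level pairing applied to a single summand, which suggests that working on $I$ from the start is the cleaner path.
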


\begin{proof}
	First note that the Artin map gives an isomorphism of $\mathcal{O}_K^{\times}$ with the absolute inertia group so we only need to consider $\rho_T(I)$. Write the restriction of $\rho_T$ to inertia as $\bigoplus\limits_{j=1}^{m} \psi_j$ where $\psi_j$ are characters. Then $(\det \rho_T)(-1) = \left(\bigotimes\limits_{j=1}^m \psi_j \right)(-1)$. Since $\rho_T$ may be defined over $\Z$, $\det \rho_T$ is self-dual. Hence if $\psi_j$ has order at least $3$, there exists $\psi_k$, $k\neq j$, such that $\psi_k=\psi_j^{-1}$. This reduces us to considering only the characters of $\psi_j$ of order at most $2$. Moreover, if $\psi_j = \mathds{1}$, then $\psi_j(-1)=1$ hence we only need to consider quadratic\footnote{By quadratic, we mean order exactly 2.} characters $\chi$ of $I$.
	
	As $p$ is odd, $\chi$ is unique and we have that $\chi(-1)=\left( \dfrac{-1}{q} \right)$ by Lemma \ref{2reduce} and hence $(\det \rho_T)(-1) = \left( \dfrac{-1}{q} \right)^{\langle \rho_T(I), \chi \rangle}$. To finish, we show that $\langle \rho_T(I), \chi \rangle=\langle \rho_T, \chi_1 \oplus \chi_2 \rangle$; by Frobenius reciprocity, it suffices to show that the induction of $\chi$ of the inertia group to the Weil group is also a quadratic ramified character. This follows immediately since $\Ind \chi$ necessarily factors through a group of the form $\langle \Frob \rangle \rtimes (C_2^n)$ which is abelian.
\end{proof}

\begin{theorem}
	Suppose $\rho_T$ is tamely ramified and let $m_T$ be the multiplicity of $-1$ as an eigenvalue of $\rho_T(\iota)$. Then $$W(\rho_T \otimes \sp (2))= (-1)^{\langle \mathds{1}, \rho_T \rangle} \left( \dfrac{-1}{q} \right)^{m_T}.$$
\end{theorem}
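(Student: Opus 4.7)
My plan is to combine the two lemmas immediately preceding the theorem. Since $\rho_T$ is defined over $\Z$ by Fact \ref{decomp}, it has real character, so Lemma \ref{mult} applies and gives
\[
W(\rho_T \otimes \sp(2)) = (-1)^{\langle \mathds{1}, \rho_T \rangle} \cdot (\det \rho_T)(-1).
\]
It therefore suffices to show $(\det \rho_T)(-1) = \left(\dfrac{-1}{q}\right)^{m_T}$, which I will treat by splitting on whether $p$ is odd or not.

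In the $p$ odd case, tameness forces $\rho_T(I)$ to be cyclic, hence abelian, so Lemma \ref{multleg} yields $(\det \rho_T)(-1) = \left(\dfrac{-1}{q}\right)^{\langle \rho_T, \chi_1 \oplus \chi_2 \rangle}$. The task reduces to proving $\langle \rho_T, \chi_1 \oplus \chi_2 \rangle \equiv m_T \pmod{2}$, and I expect the cleanest route is to show it is actually an equality. The crucial observation is that the $(-1)$-eigenspace $V_{-1}$ of $\rho_T(\iota)$ is preserved by $\rho_T(\Frob)$: using $\Frob \iota \Frob^{-1} = \iota^q$ in the Weil group together with $(-1)^q = -1$ (since $q$ is odd), a direct calculation shows $\rho_T(\iota) \rho_T(\Frob) v = \rho_T(\Frob) \rho_T(\iota)^q v = -\rho_T(\Frob) v$ for every $v \in V_{-1}$. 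Hence $V_{-1}$ is a Weil subrepresentation of $\rho_T$ on which inertia acts through the unique ramified quadratic character of $I$, so its irreducible constituents lie entirely among the two ramified quadratic Weil characters $\chi_1, \chi_2$. Writing $V_{-1} \cong \chi_1^{a_1} \oplus \chi_2^{a_2}$ gives $a_1 + a_2 = \dim V_{-1} = m_T$, and since no other Weil summand of $\rho_T$ contains a copy of $\chi_i$ (those lie outside $V_{-1}$), we have $\langle \rho_T, \chi_i \rangle = a_i$ and the desired identity $\langle \rho_T, \chi_1 \oplus \chi_2 \rangle = m_T$ follows.

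For $p=2$, tameness forces $\rho_T(\iota)$ to have odd order, so $-1$ is never an eigenvalue and $m_T = 0$, whence $(-1/q)^{m_T} = 1$. In parallel, $\det \rho_T|_I$ has odd order; as $\det \rho_T$ takes values in $\{\pm 1\}$, this forces $\det \rho_T$ to be unramified, hence trivial on $\mathcal{O}_K^\times$ under the Artin map, giving $(\det \rho_T)(-1) = 1$ as required.

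No step is genuinely difficult here, since the heavy lifting has been done in Lemmas \ref{mult} and \ref{multleg}. The one content-carrying step is the $\Frob$-invariance of $V_{-1}$: this is what bridges the purely inertial invariant $m_T$ (an eigenvalue multiplicity) with the Weil-level multiplicity $\langle \rho_T, \chi_1 \oplus \chi_2 \rangle$ appearing in Lemma \ref{multleg}, and it is precisely where the parity of $q$ enters the argument.
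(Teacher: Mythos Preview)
Your proof is correct and follows the same approach as the paper: reduce to computing $(\det\rho_T)(-1)$ via Lemma~\ref{mult}, then identify $m_T$ with $\langle \rho_T,\chi_1\oplus\chi_2\rangle$ and invoke Lemma~\ref{multleg}. The paper's proof is a one-line assertion that an eigenvalue $-1$ of $\rho_T(\iota)$ corresponds to a summand $\chi_1$ or $\chi_2$; your Frobenius-invariance argument for $V_{-1}$ is exactly the content behind that assertion, spelled out. You also handle the $p=2$ case explicitly, which the paper glosses over (Lemma~\ref{multleg} assumes $p>2$); your observation that $\det\rho_T$ lands in $\{\pm1\}$ and is trivial on odd-order tame inertia is a clean way to dispose of it.
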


\begin{proof}
	Observe that if $\rho_T(\iota)$ has an eigenvalue of $-1$, then this corresponds to one of the two ramified quadratic characters $\chi_1$ and $\chi_2$ being a summand of $\rho_T$ and hence $m_T=\langle \rho_T, \chi_1 \oplus \chi_2 \rangle$. The result now follows from Lemmas \ref{mult} and \ref{multleg}.
\end{proof}

Combining this with Theorem \ref{goodroot}, we have now proved our first main result.
	
\begin{theorem}
\label{rootno}
	Let $A/K$ be an abelian variety over a non-Archimedean local field which has tame reduction. Then $$W(A/K)=\left(\prod\limits_{e \in \N} W_{q,e}^{m_e} \right) (-1)^{\langle \mathds{1}, \rho_T \rangle} W_{q,2}^{m_T},$$ where:

	$W_{q,e} = \begin{cases}
		\left( \dfrac{q}{l} \right) &\text{ if } e=l^k \quad \text{ for some odd prime $l$ and integer $k\geqslant 1$}; \\[10pt]	
		\left( \dfrac{-1}{q} \right) &\text{ if } e=2l^k \quad \text{ for some prime } l \equiv 3 \bmod{4}, k\geqslant 1,  \qquad \text{ or } e=2; \\[10pt]	
		\left( \dfrac{-2}{q} \right) &\text{ if } e=4; \\[10pt]	
		\left( \dfrac{2}{q} \right) &\text{ if } e=2^k \quad \text{ for some integer } k \geqslant 3; \\[10pt]	
		\quad 1 &\text{ else.}
	\end{cases}$
\end{theorem}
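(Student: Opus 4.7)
The plan is to prove Theorem \ref{rootno} by splitting $\rho_A$ according to Fact \ref{decomp} and invoking the two main computations carried out earlier in the paper, namely Theorem \ref{goodroot} for the potentially good part and the immediately preceding theorem for the potentially totally toric part.

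First I would write $\rho_A\cong \rho_B\oplus(\rho_T\otimes\xcyc^{-1}\otimes\sp(2))$ using Fact \ref{decomp}. By the multiplicativity of $\varepsilon$-factors (Theorem \ref{props}i), the root number is multiplicative on direct sums, so $W(A/K)=W(\rho_B)\cdot W(\rho_T\otimes\xcyc^{-1}\otimes\sp(2))$. By Corollary \ref{dualrep}i, twisting by the unramified character $\xcyc^{-1}$ does not change the root number, so the second factor equals $W(\rho_T\otimes\sp(2))$, which by the theorem immediately preceding Theorem \ref{rootno} is exactly $(-1)^{\langle \mathds{1},\rho_T\rangle}\bigl(\tfrac{-1}{q}\bigr)^{m_T}=(-1)^{\langle \mathds{1},\rho_T\rangle}W_{q,2}^{m_T}$ (noting that $W_{q,2}=\bigl(\tfrac{-1}{q}\bigr)$ by the definition of $W_{q,e}$ in the statement).

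It remains to show $W(\rho_B)=\prod_{e\in\N}W_{q,e}^{m_e}$. Since $\rho_B$ is tamely ramified with finite image of inertia and its characteristic polynomial on $\iota$ has integer coefficients (by the remark following the notation), Lemma \ref{reduce} allows us to group the irreducible summands of $\rho_B$ according to the order of the eigenvalues of $\iota$ on each: write $\rho_B=\bigoplus_e \sigma_e$, where every eigenvalue of $\sigma_e(\iota)$ has order exactly $e$. The symplectic structure on $\rho_B$ (Fact \ref{decomp}, together with the fact that the twist $\xcyc^{1/2}$ is a $1$-dimensional unramified character so compatible with the grouping) restricts to one on each $\sigma_e$. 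Then Lemma--Definition \ref{rhoe} tells us that $\sigma_e$ splits as $m_e$ copies of $\rho_e$, where $m_e$ is exactly the count $|\{\text{eigenvalues of }\rho_B(\iota)\text{ of order }e\}|/\tilde{\varphi}(e)$ appearing in the notation. By multiplicativity again, $W(\sigma_e)=W(\rho_e)^{m_e}$, and Theorem \ref{goodroot} identifies $W(\rho_e)=W_{q,e}$.

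Assembling these, $W(\rho_B)=\prod_e W(\sigma_e)=\prod_e W_{q,e}^{m_e}$, and combining with the $\rho_T$ computation completes the proof. There is essentially no hard step at this stage, as the main obstacles (the decomposition of $\rho_B$ into the $\rho_e$ pieces, the Fröhlich–Queyrut computation giving $W(\rho_e)=W_{q,e}$ for self-dual irreducibles, the dual-pair computation for non-self-dual irreducibles, and the $\sp(2)$-twist computation) have all been established in the preceding sections. The only minor care needed is to verify that the grouping by eigenvalue order respects the symplectic structure — that is, that the pair-up forced by Lemma \ref{char} for non-self-dual summands pairs a summand with its dual which has the same eigenvalue orders on $\iota$ — which is immediate since a representation and its contragredient have inverse eigenvalues on $\iota$, and inversion preserves the order.
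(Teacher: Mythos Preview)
Your proposal is correct and follows exactly the paper's approach: split $\rho_A$ via Fact~\ref{decomp}, handle the toric part $W(\rho_T\otimes\sp(2))$ by the theorem immediately preceding Theorem~\ref{rootno}, and handle the potentially good part $W(\rho_B)$ by decomposing into pieces of the form $\rho_e$ (Lemma--Definition~\ref{rhoe}) and applying Theorem~\ref{goodroot}. The only point you leave slightly implicit is that the individual $\rho_e$ pieces need not themselves be symplectic even though $\sigma_e$ is---this is precisely what Theorem~\ref{rhoesym} and Remark~\ref{rmkrhoe} in the appendix address---but the paper's own one-line proof (``Combining this with Theorem~\ref{goodroot}'') is equally terse on this point.
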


\section{An example}
\label{example}
	
We now use the results of Dokchitser, Dokchitser, Maistret and Morgan \cite{DDMM} to obtain the information we need from the Jacobian of a genus two hyperelliptic curve (via a Weierstrass model) to compute the root number under our assumptions. We first give the relevant definitions (see for example \cite[\S 2.1]{AD17}).
		
\begin{definition}
	Let $K/\qp$ be a finite extension and let $f \in \mathcal{O}_K[x]$ be a squarefree monic polynomial with set of roots $R$. Then a cluster $\mathfrak{s} \subset R$ is a nonempty set of roots of the form $R \cap \mathcal{D}$ where $\mathcal{D} \subset K^{\rm{sep}}$ is a $p$-adic disc.
\end{definition}		
		
To compute the root number, we also need some more notation to construct the relevant representations. We choose the normalised valuation on $v$ of $K$  and extend it to $K^{\rm{sep}}$.
		
\begin{definition}
	For a cluster $\mathfrak{s}$ of cardinality at least $2$, we define:
	
\noindent \begin{tabular}{l p{0.8\linewidth}}
$d_{\ss}$ & $=\min \{ v(r-r') \, | \, r,r' \in \ss \}$, called the depth of $\ss$; \\
$\mathfrak{s}^0$ & the set of maximal subclusters of $\ss$ of odd cardinality; \\
$I_{\ss}$ & $=\Stab_I(\ss)$; \\
$\mu_{\ss}$ & $= \sum\limits_{r \in R \setminus \mathfrak{s}} v(r-r_0)$ for any $r_0 \in \ss$; \\
$\epsilon_{\ss}$ & $=\begin{cases}
			0 \text{ -representation of } I_{\ss} \text{ if } |\ss| \text{ is odd}; \\
			\mathds{1} \text{ -representation of } I_{\ss} \text{ if } |\ss| \text{ is even and } \ord_2 ([I:I_{\ss}]\mu_{\ss}) \geqslant 1; \\
			\text{order two character of } I_{\mathfrak{s}} \text{ if } |\ss| \text{ is even and } \ord_2 ([I:I_{\ss}]\mu_{\ss})< 1;
			\end{cases}$ \\
$\lambda_{\ss}$ & $=\frac{1}{2}(\mu_{\ss}+d_{\ss}|\ss^0|)$; \\
$\gamma_{\mathfrak{s}}$ & any character of $I_{\ss}$ of order equal to the prime-to-$p$ part of the denominator of $[I:I_{\ss}]\lambda_{\ss}$ (with $\gamma_{\ss}=\mathds{1}$ if $\lambda_{\ss}=0$); \\
$V_{\mathfrak{s}}$ & $=\gamma_{\mathfrak{s}} \otimes (\C[\mathfrak{s}^0] \ominus \mathds{1}) \ominus \epsilon_{\mathfrak{s}}$.
\end{tabular}
\end{definition}

\begin{theorem}[{\cite[Theorem 1.19]{DDMM}}]
	Let $l \neq p$ be prime. Let $C:y^2=f(x)$ be a hyperelliptic curve over $K$. Then $$H^1_{\acute{e}t}(C/K^{\rm{sep}}, \mathbb{Q}_l) \otimes_{\mathbb{Z}_l} \C \cong H^1_{ab} \oplus (H^1_t \otimes \sp (2))$$ as complex $I$-representations, with
	\begin{equation*}
		 H^1_{ab} = \bigoplus_{X/I} \Ind_{I_{\mathfrak{s}}}^{I} V_{\mathfrak{s}}, \, \, \, \, H^1_t = \bigoplus_{X/I} (\Ind_{I_{\mathfrak{s}}}^{I} \epsilon_{\mathfrak{s}}) \ominus \epsilon_R,
	\end{equation*}
	where $X$ is the set of clusters that are not singletons and that cannot be written as a disjoint union of more than 2 clusters of even size.
\end{theorem}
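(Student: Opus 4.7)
The plan is to realise this decomposition as the weight--monodromy filtration on the $l$-adic \'etale cohomology of a semistable model of $C$ and to read off each summand from the cluster picture of $f$. First I would pass to a finite Galois extension $L/K$ over which $C$ acquires semistable reduction and every cluster is individually defined; for tamely ramified clusters one can take $L = K(\pi_K^{1/N})$ for a suitable $N$. Over $L$ one builds an explicit semistable model of $C_L$ whose special fibre has a component $Y_{\mathfrak{s}}$ for each proper cluster $\mathfrak{s} \in X$, with components meeting transversally along a dual graph $\Gamma$ encoded by the nesting of clusters. After the change of variables $x \mapsto \pi_K^{-d_{\mathfrak{s}}}(x - r_{\mathfrak{s}})$ with a chosen centre $r_{\mathfrak{s}}$ and a corresponding rescaling of $y$, the component $Y_{\mathfrak{s}}$ is given by a Weierstrass equation $y^2 = \overline{g}_{\mathfrak{s}}(x)$ whose roots in the residue field are precisely the reductions of the odd subclusters in $\mathfrak{s}^0$.

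Given this model, the weight--monodromy machinery for semistable curves produces a short exact sequence
\begin{equation*}
0 \to H^1(\Gamma, \mathbb{Q}_l) \otimes \sp(2) \to H^1_{\acute{e}t}(C_{K^{\rm{sep}}}, \mathbb{Q}_l) \to H^1(\widetilde{C}_s, \mathbb{Q}_l) \to 0
\end{equation*}
of inertia representations, where $\widetilde{C}_s$ denotes the normalisation of the special fibre. The right-hand term splits as $\bigoplus_{\mathfrak{s}} H^1(Y_{\mathfrak{s}})$, and the Weierstrass description of $Y_{\mathfrak{s}}$ identifies this contribution, up to the Kummer twist coming from the coordinate rescaling, with $\mathbb{C}[\mathfrak{s}^0] \ominus \mathds{1}$. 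The left-hand term is the space of $\mathbb{Q}_l$-valued $1$-cycles of $\Gamma$, which cluster combinatorics identify (modulo a single global relation, accounting for the subtraction of $\epsilon_R$) with the formal sum indexed by the proper clusters in $X$.

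The main technical obstacle is tracking the action of inertia and producing the twisting characters $\gamma_{\mathfrak{s}}$ and $\epsilon_{\mathfrak{s}}$ correctly. The character $\gamma_{\mathfrak{s}}$ arises because the rescaling $x \mapsto \pi_K^{-d_{\mathfrak{s}}} x$ requires extracting a suitable root of $\pi_K$; the resulting Kummer character has order equal to the prime-to-$p$ part of the denominator of $[I:I_{\mathfrak{s}}]\lambda_{\mathfrak{s}}$, which is exactly how $\gamma_{\mathfrak{s}}$ is defined. The character $\epsilon_{\mathfrak{s}}$ records whether an even-sized cluster contributes two geometric nodes or one node with Galois-swapped branches to the special fibre, a quadratic condition governed by the $2$-adic valuation of $[I:I_{\mathfrak{s}}]\mu_{\mathfrak{s}}$. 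Finally, Galois descent from $L$ down to $K$ collects clusters in the same $I$-orbit into inductions $\Ind_{I_{\mathfrak{s}}}^{I}$, producing the sum over $X/I$. Once every twist is reconciled with the explicit Weierstrass equations of the components, the identification with the stated formulae for $H^1_{ab}$ and $H^1_t$ follows.
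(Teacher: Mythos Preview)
The paper does not prove this theorem at all: it is quoted verbatim from \cite[Theorem~1.19]{DDMM} and used as a black box to extract the inertia data needed for the root number computation in \S\ref{example}. There is therefore no ``paper's own proof'' against which to compare your proposal.

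That said, your outline is a reasonable sketch of the strategy behind the original result in \cite{DDMM}: build an explicit semistable model over a tame extension whose components are indexed by proper clusters, apply the weight--monodromy decomposition for semistable curves to separate the abelian and toric pieces, identify the abelian part with the cohomology of the components via their Weierstrass equations, and track the Kummer twists induced by the coordinate changes to recover $\gamma_{\mathfrak{s}}$ and $\epsilon_{\mathfrak{s}}$. The genuine work lies in the step you label ``once every twist is reconciled''---verifying that the normalisations match the precise definitions of $\lambda_{\mathfrak{s}}$, $\mu_{\mathfrak{s}}$, and the sign conventions for $\epsilon_{\mathfrak{s}}$ is delicate and is where most of the effort in \cite{DDMM} goes. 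For the purposes of the present paper, however, none of this is needed: the theorem is simply invoked.
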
					
	
\begin{remark}
	$H^1_{ab}$ corresponds to the potentially good case whereas $H^1_t$ corresponds to the potentially toroidal case. It is straightforward to see from the definition above that $\Jac(C)$ has potentially good reduction if and only if all clusters except $R$ have odd cardinality.
\end{remark}

\begin{example}
	Let $f=x^6-8x^4-8x^3+8x^2+12x-8$ and let $C/\Q$ be the hyperelliptic curve $y^2=f(x)$. We shall compute the global root number of $\Jac(C)$. Note that it has conductor $13^4$ so $W(\Jac(C)/\Q)=W(\Jac(C)/\Q_{13})$ since the root number at the Archimedean place is positive.
		
	The cluster picture\footnote{The annotated numbers refer to the depths.} we associate to $f$ at $13$ is

\begin{center}	
\clusterpicture
\Root(0.00,2)B(a1);     
\Root(0.40,2)B(a2);     
\Root(0.80,2)B(a3);      
\Root(1.2,2)B(a4);
\Root(1.6,2)B(a5);
\Root(2.3,2)B(a6);
\ClusterL(c1){(a1)(a2)(a3)(a4)(a5)}{$\dfrac{1}{4}$};   
\ClusterL(c3){(c1)(a6)}{$0$};  
\endclusterpicture			
\end{center}
where we denote the five roots in the inner cluster $\mathfrak{s}_1$ by $\alpha_1,...,\alpha_4,\beta$ and the lone root in the outer cluster $\mathfrak{s}_2$ by $\gamma$. Furthermore, the (cyclic) action of inertia on the roots is given by $( \alpha_1, \alpha_2, \alpha_3, \alpha_4)$.

\begin{center}
\begin{tabular}{l|l|l}
                              & $\mathfrak{s}_1$                  & $\mathfrak{s}_2$ \\ \hline
$d_{\mathfrak{s}}$            & $1/4$                             & $0$ \\
$\mathfrak{s}^0$ & $\{\{\alpha_1\}, \{\alpha_2\}, \{\alpha_3\}, \{\alpha_4\}, \{\beta\} \}$ & $\{\mathfrak{s}_1, \{\gamma\} \}$ \\
$I_{\mathfrak{s}}$            & $I$                               & $I$ \\
$\mu_{\mathfrak{s}}$          & $0$                           & $0$ \\
$\epsilon_{\mathfrak{s}}$     & $0$                               & $\mathds{1}$ \\
$\lambda_{\mathfrak{s}}$      & $5/8$                             & $0$ \\
$\gamma_{\mathfrak{s}}$       & Order $8$ character $\chi$        & $\mathds{1}$ \\
$V_\mathfrak{s}$	  & $\chi \oplus \chi^3 \oplus \chi^5 \oplus \chi^7$ & $0$ representation
\end{tabular}
\end{center}

	Collating this information, we see that for the abelian variety $\Jac(C)/\Q_{13}$, we have $m_8=1$ and $m_e=m_T=0$ otherwise. Therefore $$W(\Jac(C)/\Q)=W_{13,8}=\left(\frac{2}{13} \right) =-1,$$ which implies that the Jacobian has odd (and therefore positive) rank, assuming the parity conjecture. In fact, it is known \cite[label 28561.a.371293.1]{BSSVY16,LMFDB} to have analytic rank $1$ which agrees with our calculation.
\end{example}

\begin{remark}
\label{BKS}
	In a recent paper of Brumer, Kramer and Sabitova \cite{BKS18}, they compute root numbers of several genus two hyperelliptic curves. Our results combined with the above method of \cite{DDMM} enable us to easily compute all the relevant local root numbers away from $3$. At $p=3$, the machinery of \cite{DDMM} provides us with the representation so we use Lemma \ref{mult} to obtain the local root number and completely recover the computations of \cite{BKS18}.
\end{remark}

\section{The twisted root number}
\subsection{Local twisted root numbers}

We shall now generalise our results by considering the effect on the root number after twisting $\rho_A$ by an Artin representation $\tau_v$ with real character; this implies that the twisted root number will be real.
	
Firstly, we note that Sabitova \cite{Sab07, Sab13} has previously given formulae for twisted root numbers but these are not currently practical for computational purposes so shall adapt them using our theory. For completeness, we shall give the relevant propositions we use here.
	
\begin{proposition}\cite[Proposition 1.5]{Sab07}
\label{Sab0}
	Let $\Z=\langle \Frob \rangle$, $I=\langle \iota | \iota^e \rangle$, $G=I \rtimes \Z$ where $\Frob \iota \Frob^{-1}=\iota^q$ with $q$ coprime to $e$. Let $f$ be the least positive integer such that $q^f \equiv 1 \mod{e}$. Then every irreducible symplectic representation $\lambda$ of $G$ which acts faithfully on $I$, factors through $H=G/\langle \Frob^{2f} \rangle$ and as a representation of $H$ has the form $$\lambda = \Ind_{I \rtimes \Gamma}^{H} \phi,$$ where $\Gamma \cong C_2$ is the subgroup of $\Z/\langle \Frob^{2f} \rangle$ generated by $\Frob^f$ and $\phi$ is a character of $I \rtimes \Gamma$ such that
	\begin{enumerate}
		\item $\phi(\iota)$ is of order $e$,
		\item $f$ is even and $q^{f/2} \equiv -1 \mod{e}$,
		\item $\phi(\Frob^f)=-1$.
	\end{enumerate}	
\end{proposition}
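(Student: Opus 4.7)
The plan is to combine Clifford theory for the normal abelian subgroup $I \lhd G$ with the determinant formula already established in Lemma \ref{induce}(iii). Since $I$ is normal abelian, every irreducible representation $\lambda$ of $G$ has the form $\lambda = \Ind_{I \rtimes \Stab(\chi)}^G \tilde\phi$, where $\chi$ is a character of $I$ appearing in $\lambda|_I$, $\Stab(\chi) \leq \langle\Frob\rangle$ is its stabiliser under the conjugation action $(\Frob\cdot\chi)(\iota) = \chi(\iota^q)$, and $\tilde\phi$ is any extension of $\chi$ to $I \rtimes \Stab(\chi)$. The hypothesis that $\lambda$ is faithful on $I$ forces $\chi$ to have exact order $e$, which is condition (i); a direct computation identifies $\Stab(\chi) = \langle \Frob^f\rangle$, so $\dim \lambda = f$.

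Next, I would translate symplecticity into the congruences in (ii) and (iii). Self-duality of $\lambda$, implied by symplecticity, is by Mackey equivalent to $\chi^{-1}$ lying in the $\Frob$-orbit of $\chi$, i.e.\ $q^k \equiv -1 \pmod e$ for some $0 \leq k < f$. Squaring and using the minimality of $f$ forces $k = f/2$, so $f$ must be even and $q^{f/2} \equiv -1 \pmod e$, giving (ii). Writing $\tilde\phi = \chi \otimes \gamma$ with $\gamma$ a character of $\langle \Frob^f\rangle$, symplectic representations have trivial determinant, and Lemma \ref{induce}(iii) gives $(\det\lambda)(\Frob) = (-1)^{1+f}\gamma(\Frob^f)$; since $f$ is even, demanding this equal $1$ yields $\gamma(\Frob^f) = -1$, which is (iii). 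The other component $(\det\lambda)(\iota) = \chi(\iota^{(q^f-1)/(q-1)})$ is automatically trivial, since factoring $1 + q + \cdots + q^{f-1} = (1 + q + \cdots + q^{f/2-1})(1 + q^{f/2})$ and using $q^{f/2} \equiv -1 \pmod e$ shows this sum is divisible by $e$.

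Finally, $\gamma(\Frob^{2f}) = \gamma(\Frob^f)^2 = 1$, and $\Frob^{2f}$ is central in $G$ (since $q^f \equiv 1 \pmod e$), so Schur's lemma forces it to act on the irreducible $\lambda$ by the scalar $\gamma(\Frob^{2f}) = 1$; hence $\lambda$ factors through $H = G/\langle\Frob^{2f}\rangle$ as claimed. I expect the main obstacle to be the middle step: extracting both the parity of $f$ and the congruence $q^{f/2} \equiv -1 \pmod e$ from abstract self-duality requires careful bookkeeping with Mackey's irreducibility/duality criterion, whereas the remaining pieces are mechanical applications of the determinant formula and the centrality of $\Frob^{2f}$.
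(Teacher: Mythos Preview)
Your argument is correct and mirrors exactly how the paper itself handles this result. The paper does not give an independent proof of Proposition~\ref{Sab0} (it is cited from \cite{Sab07}), but immediately afterwards remarks that ``the above proposition is already encapsulated in Lemma~\ref{induce}''; your proposal is precisely the unpacking of Lemma~\ref{induce}(ii)--(v) needed to recover the statement, using Clifford theory for $I \lhd G$, Mackey for self-duality, and the determinant formula to pin down $\gamma(\Frob^f)=-1$.
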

	
\begin{definition}
	Given a representation $\lambda=\Ind_{I \rtimes \Gamma}^{H} \phi$ as above, let $\theta$ denote the quadratic character of $I \rtimes \Gamma$ such that $\theta(\Frob^f)=-1$, $\theta(\iota)=1$. We then define $$\hat{\lambda}=\Inf_H^G \Ind_{I \rtimes \Gamma}^{H} (\phi \otimes \theta),$$ where $\Inf_H^G$ is the inflation map from $H$ to $G$. If $\rho$ is an irreducible Weil representation which is not symplectic, we define $\hat{\rho}$ to be the zero representation and then extend this definition linearly to all Weil representations.
\end{definition}

The above proposition is already encapsulated in Lemma \ref{induce} and the symplectic representations are precisely those of the form $\theta=\Ind \chi \otimes \gamma$ with $\gamma \neq \mathds{1}$. This allows us to see directly that $\hat{\theta}=\Ind \chi$; i.e. the corresponding orthogonal representation. The reason we need to consider this twist is due to the following proposition.

\begin{proposition}\cite[Proof of Proposition 1.9]{Sab07}
\label{newSab}
	Let $\theta$ be a tamely ramified, irreducible, symplectic Weil representation and let $\tau_v$ be a self-dual Artin representation of $\Gal(K^{\rm{sep}}/K)$. Then $$W(\theta \otimes \tau_v)=((\det \tau_v)(-1))^{\frac{1}{2} \dim \theta} \chi(u)^{\dim \tau_v} (-1)^{\langle \mathds{1}, \tau_v \rangle + \langle \eta_v, \tau_v \rangle + \langle \hat{\theta}, \tau_v \rangle},$$ where $\eta_v$ is the unramified quadratic character of $K^{\times}$ and $\chi(u)$ is precisely the factor occurring in the Theorem of Fr\"{o}hlich and Queyrut.
\end{proposition}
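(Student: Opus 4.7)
Since $\theta$ is symplectic, Lemma \ref{induce}(iv)-(v) writes $\theta = \Ind_H^G(\chi \otimes \gamma)$ with $G = \Gal(L/K)$, $H = \langle \iota, \Frob^f\rangle$, and $\gamma$ a non-trivial quadratic character of $H$; in particular $f$ is even. The projection formula gives
\begin{equation*}
\theta \otimes \tau_v \cong \Ind_H^G\bigl(\chi \otimes \gamma \otimes \Res_H \tau_v\bigr).
\end{equation*}
The plan is to mirror the route used in the untwisted self-dual case (Lemma \ref{FQ}), but now carrying $\tau_v$ through each step: apply the inductivity of $\varepsilon$-factors (Theorem \ref{props}(ii)) to reduce to a root number on $H$, evaluate the auxiliary induced piece explicitly, and finish by a second induction down to the index-$2$ subgroup $H' = \langle \iota, \Frob^{f/2}\rangle$ so that Fr\"ohlich-Queyrut applies.

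For the first step I would compare $\theta \otimes \tau_v$ with the auxiliary representation $\Ind_H^G(\Res_H \tau_v) \cong \tau_v \otimes \Ind_H^G \mathds{1}_H$, which has the same dimension. Theorem \ref{props}(ii) yields
\begin{equation*}
W(\theta \otimes \tau_v) \;=\; W\bigl(\tau_v \otimes \Ind_H^G \mathds{1}_H\bigr) \cdot \frac{W\bigl(\chi \otimes \gamma \otimes \Res_H \tau_v,\,\psi \circ \Tr\bigr)}{W\bigl(\Res_H \tau_v,\,\psi \circ \Tr\bigr)}.
\end{equation*}
The decomposition $\Ind_H^G \mathds{1}_H = \mathds{1}_G \oplus \sign_{[G:H]} \oplus \bigoplus_j(\chi_j \oplus \chi_j^*)$ (with $\chi_j$ unramified), already used before Lemma \ref{FQ}, lets me evaluate the auxiliary factor term by term. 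By Corollary \ref{dualrep}(ii), each pair $(\tau_v \otimes \chi_j) \oplus (\tau_v \otimes \chi_j)^*$ contributes $(\det \tau_v)(-1)$; collecting all $\tfrac12(\dim\theta - 2)$ pairs (plus the contribution of $\sign_{[G:H]}$) yields the total power $((\det \tau_v)(-1))^{\frac12 \dim\theta}$. The $\mathds{1}_G$ piece and the $\sign_{[G:H]}$ piece (which I identify with $\eta_v$, since $[G:H]=f$ is even and the sign of the permutation representation on $G/H$ cuts out the unique unramified quadratic subextension) produce, via the standard sign computations for $W$ on a self-dual representation, the factors $(-1)^{\langle \mathds{1}, \tau_v\rangle}$ and $(-1)^{\langle \eta_v, \tau_v\rangle}$ respectively.

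For the remaining ratio I would induct a second time, now from $H$ to $H'$, taking advantage of the fact that $L^H/L^{H'}$ is an unramified quadratic extension. Absorbing $\gamma$ into the $\sign_{[H':H]}$ character (whose restriction to $(L^{H'})^\times$ is trivial, exactly as in the proof of Lemma \ref{FQ}), one reduces to applying the Fr\"ohlich-Queyrut theorem to $\chi \otimes \gamma \otimes \Res_H \tau_v$. Each self-dual summand of $\Res_H \tau_v$ then contributes one factor of $\chi(u)$, and the non-self-dual summands pair up (since $\tau_v$ is self-dual), producing the total exponent $\dim \tau_v$ and thereby the factor $\chi(u)^{\dim \tau_v}$. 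Finally, the remaining sign $(-1)^{\langle \hat\theta, \tau_v\rangle}$ appears by Frobenius reciprocity: $\langle \hat\theta, \tau_v\rangle_G = \langle \chi, \Res_H \tau_v\rangle_H$ encodes precisely the multiplicity of $\chi$ in $\Res_H \tau_v$ that controls the sign left over after the $\gamma$-to-$\sign$ absorption.

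The main obstacle is the bookkeeping: the inductivity identity changes the additive character from $\psi$ to $\psi \circ \Tr$, which shifts all conductor exponents appearing via Theorem \ref{props}(iii), so one must check that these shifts cancel neatly between numerator and denominator of the ratio above and do not introduce spurious factors of $(\det \tau_v)$ evaluated at a uniformiser. A secondary point is justifying the separation of irreducible components of $\tau_v$ into self-dual pieces (contributing to $\chi(u)$) and dual pairs (contributing to $(\det \tau_v)(-1)$); this is where the self-duality hypothesis on $\tau_v$ is used in an essential way.
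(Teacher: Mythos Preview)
The paper does not give its own proof of this proposition; it is quoted from \cite[Proof of Proposition 1.9]{Sab07}. Your overall strategy---apply inductivity in degree to pass from $G$ to $H$, decompose $\Ind_H^G\mathds{1}_H$ as in the untwisted argument preceding Lemma~\ref{FQ}, then induct once more from $H$ to $H'$ and invoke Fr\"ohlich--Queyrut---is the right one and is indeed how Sabitova proceeds. The identification $\sign_{[G:H]}=\eta_v$ and the Frobenius-reciprocity reading $\langle\hat\theta,\tau_v\rangle_G=\langle\chi,\Res_H\tau_v\rangle_H$ are both correct.

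There is, however, a genuine gap in your accounting of the individual factors. You assert that the $\mathds{1}_G$ and $\sign_{[G:H]}$ summands of $\tau_v\otimes\Ind_H^G\mathds{1}_H$ contribute $(-1)^{\langle\mathds{1},\tau_v\rangle}$ and $(-1)^{\langle\eta_v,\tau_v\rangle}$ respectively ``via the standard sign computations''. This is not correct as stated: $W(\tau_v)$ is not $(-1)^{\langle\mathds{1},\tau_v\rangle}$ for a general self-dual $\tau_v$ (it need not even be $\pm 1$, since $W(\tau_v)^2=(\det\tau_v)(-1)$). What actually happens is that $W(\tau_v)W(\tau_v\otimes\eta_v)=(\det\tau_v)(-1)\cdot(-1)^{a(\tau_v)}$ by Theorem~\ref{props}(iii)--(iv), supplying the missing power of $(\det\tau_v)(-1)$ together with a conductor sign $(-1)^{a(\tau_v)}$, not the inner-product signs you claim. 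The factors $(-1)^{\langle\mathds{1},\tau_v\rangle+\langle\eta_v,\tau_v\rangle}$ instead emerge later, from the second induction step $H\to H'$: they record which summands of $\Res_H\tau_v$ become trivial on $(L^{H'})^\times$ after the $\gamma\leftrightarrow\sign_{[H':H]}$ swap, and this is precisely the part of the computation that interacts with the conductor shifts you flag as an obstacle but do not resolve. So the sketch has the right skeleton but misattributes the sign factors; to complete it you must track the $\psi\circ\Tr$ conductor shifts through both induction steps and see that the residual sign is $(-1)^{a(\tau_v)}$ cancelling against its counterpart from the $H$-level ratio, with the inner-product signs coming from the Fr\"ohlich--Queyrut step rather than from $W(\tau_v)$ directly.
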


Our first step towards a complete formula for the twisted root number is to give an explicit description of $\hat{\theta}$ above. Continuing with our approach of breaking $\rho_B$ into symplectic summands of the form $\rho_e$, we shall describe the corresponding orthogonal representations attached to each $\rho_e$. We concentrate for the moment on the case that each irreducible summand is self-dual of (even) dimension at least $2$ (cf. Lemma \ref{induce}) which implies that $\zeta_e \not\in K$.

Observe that if $\theta$ is symplectic of dimension $f$, then $\hat{\theta}$ factors through a Galois extension $L/K$ with $\Gal(L/K)=\langle \iota, \Frob \, | \, \iota^e, \Frob^f, \Frob \iota \Frob^{-1} =\iota^q \rangle \cong C_e \rtimes C_f$ with a faithful action. We now explicitly describe $L$; the proof follows from Proposition \ref{Sab0}.

\begin{lemma}
	Let $K$ be a non-Archimedean local field with residue cardinality $q$. Let $\theta$ be an irreducible, orthogonal, tamely ramified Weil representation of $\mathcal{W}(K^{\rm{sep}}/K)$ such that $\dim \theta \geqslant 2$ and $|\theta(I)|=e$. Then $\theta$ factors faithfully through $\Gal(K(\zeta_e,\pi_K^{1/e})/K)$.
\end{lemma}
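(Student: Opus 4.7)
The plan is to use Lemma \ref{induce} to describe $\theta$ explicitly, identify its kernel as a representation of the Weil group, and then recognise the resulting faithful quotient as $\Gal(K(\zeta_e,\pi_K^{1/e})/K)$.

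By parts (ii), (iv) and (v) of Lemma \ref{induce}, I can write $\theta \cong \Ind_H^G \chi$, where $G = \Gal(L/K) = \langle \iota, \Frob \mid \iota^e, \Frob^n, \Frob \iota \Frob^{-1}=\iota^q \rangle$ for some tame Galois $L/K$, $H = \langle \iota, \Frob^f \rangle$, the integer $f$ is even with $q^{f/2} \equiv -1 \pmod e$, and $\chi$ is a faithful character of $\langle \iota \rangle$ extended trivially on $\Frob^f$ (since orthogonality forces $\gamma = \mathds{1}$). Since $\theta$ is induced from a one-dimensional character, $\ker\theta$ equals the largest normal subgroup of $G$ contained in $\ker\chi = \langle \Frob^f \rangle$. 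The relation $q^f \equiv 1 \pmod e$ forces $\Frob^f$ to commute with $\iota$, so $\langle \Frob^f \rangle$ is already central, hence normal, in $G$. Therefore $\ker\theta = \langle \Frob^f \rangle$, and $\theta$ factors faithfully through $G/\langle \Frob^f \rangle$, an extension of $C_f$ by $C_e$ in which $\Frob$ acts on $\iota$ by $q$-th power.

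The final step is to identify this quotient of $\mathcal{W}(K^{\rm{sep}}/K)$ with $\Gal(K(\zeta_e,\pi_K^{1/e})/K)$. The extension $K(\zeta_e,\pi_K^{1/e})/K$ is tamely ramified of degree $ef$: $K(\zeta_e)/K$ is unramified of degree $f$ (the order of $q \bmod e$, since $p \nmid e$), and $K(\zeta_e,\pi_K^{1/e})/K(\zeta_e)$ is totally tamely ramified of degree $e$. Its Galois group has the same semidirect-product presentation, with the inertia generator sending $\pi_K^{1/e} \mapsto \zeta_e \pi_K^{1/e}$ and Frobenius acting on $\zeta_e$ by $q$-th power. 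The subgroup of $\mathcal{W}(K^{\rm{sep}}/K)$ fixing $K(\zeta_e,\pi_K^{1/e})$ consists of those elements whose image in $G$ lies in $\langle \Frob^f \rangle$: fixing $\pi_K^{1/e}$ forces the inertia exponent to be divisible by $e$, while fixing $\zeta_e$ forces the Frobenius exponent to be divisible by $f$. This is precisely the kernel of the composition $\mathcal{W}(K^{\rm{sep}}/K) \twoheadrightarrow G \twoheadrightarrow G/\langle \Frob^f \rangle$, and a comparison of orders ($ef$ on each side) then yields faithfulness.

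The main subtlety is ensuring that the abstract isomorphism $G/\langle \Frob^f \rangle \cong \Gal(K(\zeta_e,\pi_K^{1/e})/K)$ corresponds to the \emph{same} normal subgroup of the tame Weil group, rather than merely to an isomorphic quotient: a priori, different choices of uniformizer would produce non-isomorphic totally ramified extensions of $K(\zeta_e)$ of degree $e$. This is resolved by the fact that the tame inertia is pro-cyclic with a unique quotient of order $e$, whose fixed field over $K^{\rm{unr}}$ is $K^{\rm{unr}}(\pi_K^{1/e})$ for our fixed uniformizer (implicit in the choice of topological generator $\iota$), and by the fact that the constraint $\Frob \iota \Frob^{-1} = \iota^q$ forces the unramified part of the kernel to correspond exactly to $K(\zeta_e)$; these two constraints together rigidify the identification.
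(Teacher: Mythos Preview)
Your argument is correct and considerably more detailed than the paper's, which consists of the single clause ``the proof follows from Proposition~\ref{Sab0}''. You instead work directly from Lemma~\ref{induce}, which is the more natural reference here since parts~(iv)--(v) already handle the orthogonal case via $\gamma=\mathds{1}$, and you spell out the field identification that the paper leaves to the reader.

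One step in your final paragraph should be tightened. Your ``rigidification'' claim is that the inertia constraint and the unramified constraint together pin down the kernel, but taken literally this is not enough: a normal subgroup of $C_e\rtimes\langle\Frob\rangle$ meeting $C_e$ trivially and projecting onto $f\Z$ has the form $\langle\iota^a\Frob^f\rangle$ for any $a$ with $(q-1)a\equiv 0\pmod e$, and there are $\gcd(e,q-1)$ of these in general. What actually removes the ambiguity is the self-duality condition $q^{f/2}\equiv -1\pmod e$ you already recorded from Lemma~\ref{induce}(iv): it gives $e\mid 1+q+\cdots+q^{f-1}$, so $(\iota^b\Frob)^f=\iota^{b(1+q+\cdots+q^{f-1})}\Frob^f=\Frob^f$ for every $b$. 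Thus the element $\Frob^f$, and hence $\ker\theta=\langle\Frob^f\rangle$, is independent of the choice of Frobenius lift. Choosing the lift that fixes $\pi_K^{1/e}$ then identifies the fixed field as $K(\zeta_e,\pi_K^{1/e})$ on the nose, and the same divisibility shows this field is independent of the choice of $\pi_K$. This one observation replaces your last paragraph cleanly; the parenthetical linking $\iota$ to a choice of uniformiser is not needed (and not quite right, since those choices are independent).
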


With our extension $L/K$ determined, we may now describe the $\hat{\theta}$ as Galois representations and not just abstract representations.

\begin{lemma}
	Let $\rho$ be a symplectic, tamely ramified Weil representation of the form $\rho_e$ (cf. Definition \ref{rhoe}) for some integer $e>2$. Assume moreover that every irreducible summand of $\rho$ is symplectic. Then the direct sum of the orthogonal twists, $\hat{\rho}_e$, is isomorphic to the direct sum of all irreducible, faithful, $f$-dimensional representations of $\Gal(K(\zeta_e,\pi_K^{1/e})/K)$, where $f=[K(\zeta_e):K]$ is the order of $q \bmod{e}$.
\end{lemma}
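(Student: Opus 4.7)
The plan is to match each symplectic summand $\theta$ of $\rho_e$ with its orthogonalised companion $\hat{\theta}$, and then count on both sides of the resulting bijection.

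First, I would unpack $\hat{\theta}$ for a single summand. By Lemma \ref{induce}(ii), any symplectic irreducible summand of $\rho_e$ has the shape $\theta = \Ind_H^G(\chi \otimes \gamma)$ with $H = \langle \iota, \Frob^f \rangle$, $\chi$ a faithful character of $\langle \iota \rangle$, and (by Lemma \ref{induce}(v)) $\gamma$ the unique nontrivial quadratic character of $\langle \Frob^f \rangle$. The auxiliary character used in the definition of $\hat{\theta}$ is trivial on $\iota$ and sends $\Frob^f \mapsto -1$, so tensoring with $\chi \otimes \gamma$ cancels $\gamma$, giving $\hat{\theta} = \Ind_H^G \chi$. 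This is orthogonal (Lemma \ref{induce}(v) with $\gamma = \mathds{1}$) and factors faithfully through $\Gal(L/K)$, where $L = K(\zeta_e, \pi_K^{1/e})$, by the preceding lemma.

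Second, I would match the summands of $\rho_e$ with the $\Frob$-orbits of faithful characters of $\langle \iota \rangle$. By Definition \ref{rhoe}, $\rho_e(\iota)$ has as eigenvalues each primitive $e$-th root of unity with multiplicity one, while the eigenvalues of a given summand $\rho_j(\iota)$ form the single $\Frob$-orbit $\{ \chi_j(\iota)^{q^i} \}_{i=0}^{f-1}$; this orbit has size $f$ because $q$ has order $f$ modulo $e$ and $\chi_j$ is faithful. Partitioning the eigenvalues into such orbits shows that the $m = \tilde{\varphi}(e)/f$ summands correspond bijectively to the $\Frob$-orbits of faithful characters of $\langle \iota \rangle$.

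Third, I would compare with the target set of representations. By Mackey's criterion, each $\Ind_H^G \chi_j$ is irreducible (the $f$ conjugates $\chi_j^{q^i}$ are pairwise distinct), and $\Ind_H^G \chi \cong \Ind_H^G \chi'$ precisely when $\chi, \chi'$ lie in a common $\Frob$-orbit. Conversely, Clifford theory applied to $\Gal(L/K) = C_e \rtimes C_f$ shows that every irreducible faithful $f$-dimensional representation arises as an induction of a faithful character of $H$: the restriction to $\langle \iota \rangle$ is a sum of Galois conjugates of some character which must be faithful on $\langle \iota \rangle$ (else the kernel would be nontrivial), and then dimension forces the induction to be from $H$. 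Collecting gives $\hat{\rho}_e = \bigoplus_{j=1}^m \Ind_H^G \chi_j$ as the desired direct sum, each summand appearing exactly once.

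The main obstacle is the representation-theoretic exhaustivity argument in the final step, namely confirming that the family $\{\Ind_H^G \chi_j\}$ genuinely covers every irreducible, faithful, $f$-dimensional representation of $\Gal(L/K)$ and no two coincide; the first two steps are essentially routine unpacking of the hat operation and counting of orbits.
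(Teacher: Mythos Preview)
Your proposal is correct and follows essentially the same skeleton as the paper: produce the $\hat{\theta}_j$, verify they are irreducible, faithful and pairwise distinct, then count both sides. The paper is terser---it observes that each $\hat{\theta}_j$ is a one-dimensional twist of $\theta_j$ (push-pull) hence irreducible and faithful, notes distinctness via the trace on $\iota$, and leaves the count of irreducible faithful $f$-dimensional representations of $\Gal(L/K)$ as a straightforward exercise---whereas you invoke Mackey's criterion and Clifford theory explicitly; but the underlying argument is the same.
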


\begin{proof}
	First observe that for each irreducible summand $\theta_j$ of $\rho$, $\hat{\theta}_j$ is faithful and irreducible since it is a one-dimensional twist by the push-pull formula. Now note that distinct summands $\theta_j$ of $\rho$ have distinct traces on $\iota$ and hence $\theta_j$ are also all distinct.
	
	This produces $\tilde{\varphi}(e)/f$ such representations; to finish the proof, we simply need to show that $\Gal(L/K)$ has the same number of representations with these properties, where $L=K(\zeta_e,\pi_K^{1/e})$; this is a straightforward exercise.
\end{proof}

The above lemma prompts us to make the following definition.

\begin{definition}
\label{rhoef}
	Let $L=K(\zeta_e,\pi_K^{1/e})$ and $f=f(L/K)=[K(\zeta_e):K]$. Then $\rho_{e,f}$ is the direct sum of all irreducible, faithful, $f$-dimensional representations of $\Gal(L/K)$.
\end{definition}

However, since $\hat{\theta}$ only sees irreducible symplectic summands, we a priori need to distinguish these. This turns out to be unnecessary if we only care about multiplicities $\!\!\! \mod{2}$, which will be true for twisted root numbers (see Lemma \ref{twistlem}).

\begin{lemma}
\label{rhoeftau}
	Let $\rho=\bigoplus\limits_j \rho_{e_j}$ be tamely ramified, symplectic Weil representation where $\rho_{e_j}$ are of the form $\rho_e$. Then for all self-dual Artin representations $\tau_v$ of $\Gal(K^{\rm{sep}}/K)$: $$\langle \hat{\rho}, \tau_v \rangle \equiv \sum\limits_j \langle \rho_{e_j,f_j}, \tau_v \rangle \bmod{2},$$ where $f_j$ is the order of $q \bmod{e_j}$.
\end{lemma}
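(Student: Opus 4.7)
The plan is to reduce the identity to a per-orbit analysis of the Galois orbits of faithful order-$e$ characters of $\langle \iota \rangle$, and then use the symplectic structure of $\rho$ together with the self-duality of $\tau_v$ to show that every non-matching contribution pairs off modulo $2$.

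By additivity of both $\hat{\cdot}$ and $\langle\cdot,\tau_v\rangle$, I would group the summands of $\rho$ according to the value $e = e_j$ and handle each $e \geq 3$ separately (for $e \in \{1,2\}$ every summand is one-dimensional, hence never symplectic, so $\hat{\cdot}$ sends it to zero and its contribution is matched by the parallel dual-pair argument below). Let $R_e = \bigoplus_{j:e_j=e}\rho_{e_j}$, which contains $m_e$ blocks of type $\rho_e$. By Lemma \ref{induce}, each irreducible summand of $R_e$ has the form $\Ind_{\langle\iota,\Frob^f\rangle}^G(\chi\otimes\gamma)$, where $\chi$ is a faithful order-$e$ character of $\langle\iota\rangle$ (unique up to Galois conjugacy) and $\gamma$ is a character of $\langle\Frob^f\rangle$. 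The defining property of $\rho_e$ ensures each orbit of $\chi$ appears exactly once in each block; letting $N_\chi(\gamma)$ denote the multiplicity of $\Ind(\chi\otimes\gamma)$ in $R_e$, we have $\sum_\gamma N_\chi(\gamma) = m_e$. The preceding lemma, combined with the linear extension of $\hat{\cdot}$, gives $\hat{R}_e \cong \bigoplus_\chi \bigl(\sum_{\gamma\text{ symp}} N_\chi(\gamma)\bigr)\Ind\chi$, while $m_e\rho_{e,f} \cong \bigoplus_\chi m_e\Ind\chi$. The identity then reduces to
\begin{equation*}
\sum_\chi T(\chi)\,\langle\Ind\chi,\tau_v\rangle \equiv 0 \pmod 2, \qquad T(\chi) := \sum_{\gamma\text{ not symp}} N_\chi(\gamma).
\end{equation*}

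The heart of the proof is the parity analysis of $T(\chi)$, which splits on whether $q^{f/2}\equiv -1\bmod e$. If so, Lemma \ref{induce}(iv)--(v) shows that $\Ind(\chi\otimes\gamma)$ is orthogonal when $\gamma=\mathds{1}$, symplectic when $\gamma$ has order $2$, and non-self-dual when $\gamma^2\neq\mathds{1}$; the symplectic structure on $\rho$ then forces $N_\chi(\mathds{1})$ to be even (orthogonal irreducible summands of a symplectic representation occur with even multiplicity) and $N_\chi(\gamma) = N_\chi(\gamma^{-1})$ whenever $\gamma^2\neq\mathds{1}$ (duals pair with equal multiplicity), making $T(\chi)$ itself even. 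If instead $q^{f/2}\not\equiv-1\bmod e$, every induced summand is non-self-dual and $T(\chi)=m_e$; but now orbits come in dual pairs $\{\chi,\chi^{-1}\}$, and self-duality of $\tau_v$ together with $(\Ind\chi)^*\cong\Ind\chi^{-1}$ yields $\langle\Ind\chi,\tau_v\rangle = \langle\Ind\chi^{-1},\tau_v\rangle$, so each such pair contributes $2m_e\langle\Ind\chi,\tau_v\rangle$ to the sum, manifestly even. The main obstacle throughout is just the bookkeeping: correctly classifying each $\Ind(\chi\otimes\gamma)$ into the three classes (symplectic, orthogonal, or non-self-dual) and invoking exactly the right structural consequence of $\rho$ being symplectic in each case; once this case analysis is set up cleanly, the rest is a straightforward combinatorial tally.
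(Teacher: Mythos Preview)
Your argument for $e\geq 3$ is correct and rests on the same two ingredients as the paper: the structure of a symplectic representation (non-symplectic irreducible summands pair with their duals, orthogonal ones occurring with even multiplicity) together with the self-duality of $\tau_v$. The organisation is different, however. The paper does not fix $e$ and split on whether $-1\in\langle q\rangle\bmod e$; it simply applies Lemma~\ref{semisymp} once to the whole of $\rho$, pairing each non-symplectic irreducible $\theta$ with $\theta^*$. Since $\theta$ lies in the orbit of $\chi$ exactly when $\theta^*$ lies in the orbit of $\chi^{-1}$, each such pair contributes $\Ind\chi\oplus(\Ind\chi)^*$ to the difference between $\sum_j\rho_{e_j,f_j}$ and $\hat\rho$, and self-duality of $\tau_v$ gives $\langle\Ind\chi\oplus(\Ind\chi)^*,\tau_v\rangle=2\langle\Ind\chi,\tau_v\rangle$. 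Your Cases~1 and~2 are precisely the two sub-cases of this single step (according to whether the orbits of $\chi$ and $\chi^{-1}$ coincide), so your route is not wrong, but the per-orbit bookkeeping and the further subdivision by type of $\gamma$ in Case~1 can all be absorbed into one global invocation of Lemma~\ref{semisymp}.

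One caution on the parenthetical for $e\in\{1,2\}$: you assert these fall under the Case~2 dual-pair argument, but for $e\leq 2$ there is a single orbit and it is self-paired ($\chi=\chi^{-1}$), so pairing distinct orbits yields nothing and the contribution $m_e\langle\Ind\chi,\tau_v\rangle$ need not be even on its own. The paper's proof is equally terse at this point, so this is a shared subtlety rather than a defect specific to your approach.
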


\begin{proof}
	First note that if every irreducible summand of $\rho$ is symplectic, then we have equality. All irreducible summands $\theta$ of $\rho$ which are not symplectic give no contribution to $\hat{\rho}$, so we must show that they always occur with even multiplicity in $\tau$. Observe that we may restrict ourselves to $\theta$ of Galois type which are not symplectic.
	
	By Lemma \ref{semisymp}, $\theta \oplus \theta^*$ is a subrepresentation of $\rho$. Moreover, for any self-dual Artin representation $\tau_v$, $\langle \theta, \tau_v \rangle = \langle \theta^*, \tau_v \rangle.$ Hence $
	\sum\limits_j \langle \rho_{e_j,f_j}, \tau_v \rangle = \langle \hat{\rho}, \tau_v \rangle + 2\langle \theta_1, \tau_v \rangle \equiv \langle \hat{\rho}, \tau_v \rangle \bmod{2},$ for some $\theta_1$.
\end{proof}

With the $\hat{\rho}$ described, we can now focus on the twisted root numbers themselves.

\begin{lemma}
\label{twistlem}
	Let $\rho$ be a tamely ramified Weil representation and let $\rho_T, \tau_v$ be self-dual Artin representations. Then:
	\begin{enumerate}
		\item $W(\rho_T \otimes \tau_v \otimes \sp(2))=W(\rho_T \otimes \sp(2))^{\dim \tau_v} ((\det \tau_v)(-1))^{\dim \rho_T} (-1)^{\langle \rho_T, \tau_v \rangle + \dim \tau_v \langle \mathds{1}, \rho_T \rangle}$;
		\item $W((\rho \oplus \rho^*) \otimes \tau_v)=W(\rho \oplus \rho^*)^{\dim \tau_v} ((\det \tau_v)(-1))^{\dim \rho}$;
		\item If $\rho$ is irreducible and symplectic, then $$W(\rho \otimes \tau_v)=W(\rho)^{\dim \tau_v} ((\det \tau_v)(-1))^{\frac{1}{2} \dim \rho} (-1)^{\dim \tau_v + \langle \mathds{1}, \tau_v \rangle + \langle \eta_v, \tau_v \rangle + \langle \hat{\rho}, \tau_v \rangle},$$ where $\eta_v$ is the quadratic unramified character of $K^{\times}$.
	\end{enumerate}
\end{lemma}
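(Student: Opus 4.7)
The plan is to handle each part in turn by reducing to results already proved in the paper, namely Lemma \ref{mult}, Corollary \ref{dualrep}, Lemma \ref{FQ} and Sabitova's Proposition \ref{newSab}.

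For part (i), I would expand $W(\rho_T \otimes \tau_v \otimes \sp(2))$ using Lemma \ref{mult}, which applies because $\rho_T \otimes \tau_v$ has real character (both factors do). This gives $(-1)^{\langle \mathds{1}, \rho_T \otimes \tau_v \rangle} ((\det(\rho_T \otimes \tau_v))(-1))$. Then I would simplify using $\langle \mathds{1}, \rho_T \otimes \tau_v \rangle = \langle \tau_v^*, \rho_T \rangle = \langle \tau_v, \rho_T \rangle$ by self-duality of $\tau_v$, and $\det(\rho_T \otimes \tau_v) = (\det \rho_T)^{\dim \tau_v} (\det \tau_v)^{\dim \rho_T}$. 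Finally, I would raise the Lemma \ref{mult} expression for $W(\rho_T \otimes \sp(2))$ to the power $\dim \tau_v$ and compare; the sign discrepancy is exactly $(-1)^{\langle \rho_T, \tau_v \rangle + \dim \tau_v \langle \mathds{1}, \rho_T \rangle}$, using that multiplying $((\det \rho_T)(-1))^{\dim \tau_v}$ by itself disappears modulo squares.

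For part (ii), I would observe that $(\rho \otimes \tau_v)^* \cong \rho^* \otimes \tau_v^* \cong \rho^* \otimes \tau_v$ since $\tau_v$ is self-dual, so $(\rho \oplus \rho^*) \otimes \tau_v = (\rho \otimes \tau_v) \oplus (\rho \otimes \tau_v)^*$. Applying Corollary \ref{dualrep}(ii) gives $W = (\det(\rho \otimes \tau_v))(-1) = ((\det \rho)(-1))^{\dim \tau_v} ((\det \tau_v)(-1))^{\dim \rho}$. Since Corollary \ref{dualrep}(ii) also gives $W(\rho \oplus \rho^*) = (\det \rho)(-1)$, this immediately matches the claimed formula.

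Part (iii) is the main input and the only place where we genuinely need new machinery. I would invoke Proposition \ref{newSab} directly, which states
\[
W(\rho \otimes \tau_v) = ((\det \tau_v)(-1))^{\frac{1}{2}\dim \rho} \chi(u)^{\dim \tau_v} (-1)^{\langle \mathds{1}, \tau_v \rangle + \langle \eta_v, \tau_v \rangle + \langle \hat{\rho}, \tau_v \rangle},
\]
and then eliminate $\chi(u)$ using Lemma \ref{FQ}(ii), which identifies $\chi(u) = -W(\rho)$ for irreducible symplectic $\rho$. Substituting yields $\chi(u)^{\dim \tau_v} = (-1)^{\dim \tau_v} W(\rho)^{\dim \tau_v}$, and the extra $(-1)^{\dim \tau_v}$ merges with the exponent $\langle \mathds{1}, \tau_v \rangle + \langle \eta_v, \tau_v \rangle + \langle \hat{\rho}, \tau_v \rangle$ to give the claimed formula.

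The main obstacle, such as it is, is really a bookkeeping one: ensuring the correct use of self-duality ($\tau_v^* \cong \tau_v$, hence $\langle \alpha, \tau_v \rangle = \langle \alpha^*, \tau_v \rangle$ and inner products with $\tau_v$ are well-behaved under taking duals) so that the sign exponents combine cleanly. No harder input is required beyond Proposition \ref{newSab}; the real work of that proposition has already been done by Sabitova and incorporated here via the definition of $\hat{\rho}$.
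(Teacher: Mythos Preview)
Your proposal is correct and follows essentially the same argument as the paper: part (i) by a direct application of Lemma \ref{mult}, part (ii) via Corollary \ref{dualrep} after rewriting $(\rho \oplus \rho^*) \otimes \tau_v$ as a representation plus its dual, and part (iii) by invoking Proposition \ref{newSab} and substituting $\chi(u)=-W(\rho)$ from Lemma \ref{FQ}(ii). The only difference is that you spell out the determinant and inner-product bookkeeping in part (i) more explicitly than the paper does.
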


\begin{proof}
(i): This is a simple application of Lemma \ref{mult}. 

(ii): Note $(\rho \oplus \rho^*) \otimes \tau_v \cong (\rho \otimes \tau_v) \oplus (\rho \otimes \tau_v)^*$ so we use Corollary \ref{dualrep}.

(iii): By Proposition \ref{newSab}, $W(\rho \otimes \tau)=((\det \tau)(-1)^{\frac{1}{2} \dim \rho} \chi(u)^{\dim \tau} (-1)^{\langle \mathds{1}, \tau_v \rangle + \langle \eta_v, \tau_v \rangle + \langle \hat{\rho}, \tau_v \rangle}.$ Making the substitution $W(\rho)=-\chi(u)$ (cf. Lemma \ref{FQ}) yields the result.
\end{proof}

\begin{theorem}
\label{twistthm}
	Let $A/K$ be an abelian variety over a non-Archimedean local field which has tame reduction and let $\tau_v$ be a self-dual Artin representation of $\Gal(K^{\rm{sep}}/K)$. Write $\rho_A= \rho_B \oplus (\rho_T \otimes \xcyc^{-1} \otimes \sp(2))$ as in Fact \ref{decomp}. Moreover, write $\rho_B \otimes \xcyc^{1/2} = \bigoplus\limits_j \rho_{e_j}$ as a decomposition of summands of the form $\rho_e$ (cf. Definition \ref{rhoe}) and let $m_e=\{ j: e_j=e \}$ count the multiplicity of such summands. 
	
	 Then $$W(A/K, \tau_v)=W(A/K)^{\dim \tau_v}((\det \tau_v)(-1))^{\dim A}(-1)^{l_1+l_2},$$ where $$l_1=\langle \rho_T,\tau_v \rangle + \dim \tau_v \langle \mathds{1}, \rho_T \rangle,$$ $$l_2= \sum\limits_{e \in \N} m_e \left( \langle \rho_{e,f},\tau_v \rangle + \dfrac{\tilde{\varphi}(e)}{[K(\zeta_e):K]}(\langle \mathds{1},\tau_v \rangle + \langle \eta, \tau_v \rangle + \dim \tau_v)\right),$$ with $\eta_v$ the unramified quadratic character of $K^{\times}$ and for a fixed $e \in \N$, $f=[K(\zeta_e):K]$ is the order of $q \bmod{e}$.
\end{theorem}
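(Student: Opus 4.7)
The plan is to reduce to the pieces for which the twisted root number has already been computed in Lemma \ref{twistlem}, and then collate exponents. First, by Fact \ref{decomp} we have
\[
\rho_A \otimes \tau_v \;=\; (\rho_B \otimes \tau_v) \;\oplus\; (\rho_T \otimes \xcyc^{-1} \otimes \sp(2) \otimes \tau_v).
\]
Since $W$ is multiplicative over direct sums (Theorem \ref{props}(i)) and is unchanged by twisting by $\xcyc$ (Corollary \ref{dualrep}(i)), it suffices to evaluate $W(\rho_B \otimes \tau_v)$ and $W(\rho_T \otimes \sp(2) \otimes \tau_v)$ separately.

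Next, decompose $\rho_B \otimes \xcyc^{1/2} = \bigoplus_j \rho_{e_j}$ into summands of the form described in Lemma--Definition \ref{rhoe}. Each $\rho_{e}$ splits further into a sum of irreducible pieces: by Lemma \ref{char} and Remark \ref{eulertweak} the non-symplectic summands appear in dual pairs $\sigma \oplus \sigma^*$, while the symplectic ones are self-dual irreducible of dimension $f = [K(\zeta_e):K]$ and multiplicity $\tilde\varphi(e)/f$ inside $\rho_e$. Applying Lemma \ref{twistlem}(i) to the toric piece, Lemma \ref{twistlem}(ii) to each dual pair, and Lemma \ref{twistlem}(iii) to each irreducible symplectic summand, and then multiplying everything together, yields an expression of the form $W(A/K)^{\dim\tau_v} \cdot ((\det\tau_v)(-1))^{N} \cdot (-1)^{l_1 + l_2'}$.

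It then remains to check the three bookkeeping identities. The $W(A/K)^{\dim\tau_v}$ factor arises automatically since the same decomposition computes $W(A/K)$, each local factor raised to $\dim\tau_v$. The exponent of $(\det\tau_v)(-1)$ equals
\[
N = \dim\rho_T + \sum_{\text{dual pairs}} \dim\sigma + \tfrac{1}{2} \sum_{\text{symp.\ irred.}} \dim\rho_j \;=\; \dim\rho_T + \tfrac{1}{2}\dim\rho_B = \dim A,
\]
as required. For the $(-1)$ exponent, the toric contribution gives $l_1$ directly via Lemma \ref{twistlem}(i). For $l_2$, each $\rho_e$-summand contributes $\tilde\varphi(e)/f$ copies of $(\dim\tau_v + \langle \mathds{1},\tau_v\rangle + \langle \eta_v,\tau_v\rangle)$ together with $\sum_j \langle \hat\rho_j, \tau_v\rangle$, and Lemma \ref{rhoeftau} allows us to replace the latter by $\langle \rho_{e,f}, \tau_v \rangle \bmod 2$; summing over $e$ with multiplicity $m_e$ reproduces the formula for $l_2$.

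The only genuinely delicate step is the last replacement: the dual pairs $\sigma \oplus \sigma^*$ give no $(-1)$-contribution of their own but are absorbed into the $\langle \rho_{e,f}, \tau_v \rangle$ term only up to parity, which is exactly what Lemma \ref{rhoeftau} provides. A secondary subtlety is the case $e \leq 2$ of self-dual characters (treated as pairs under the convention of Remark \ref{eulertweak}), where one must verify by direct inspection that the formula $l_2$ with $\tilde\varphi(e) = 2$ and $\rho_{e,f}$ the trivial or unramified quadratic character is consistent with the $W(\sigma \oplus \sigma^*)$ output of Lemma \ref{twistlem}(ii). These checks are routine modulo 2 and complete the proof.
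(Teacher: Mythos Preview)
Your proposal is correct and follows essentially the same approach as the paper: decompose via Fact \ref{decomp}, apply Lemma \ref{twistlem}(i)--(iii) to the toric part, dual pairs, and symplectic irreducibles respectively, then use Lemma \ref{rhoeftau} to replace $\sum_j\langle\hat\rho_j,\tau_v\rangle$ by $\sum_e m_e\langle\rho_{e,f},\tau_v\rangle$ modulo $2$. The only cosmetic difference is that the paper makes the count of $\tilde\varphi(e)/f$ summands uniform by explicitly inserting the trivial factor $(-1)^{2\dim\tau_v+2\langle\mathds{1},\tau_v\rangle+2\langle\eta_v,\tau_v\rangle}$ for each dual pair, whereas you absorb this into your ``routine modulo $2$'' remark; note also that the pairing of higher-dimensional non-symplectic summands is really Lemma \ref{semisymp} rather than Lemma \ref{char}.
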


\begin{proof}
First note that $W(\rho_A \otimes \tau_v)=W(\rho_B \otimes \xcyc^{1/2} \otimes \tau_v)W(\rho_T \otimes \tau_v \otimes \sp(2))$ by Corollary \ref{dualrep} and that $W(\rho_T \otimes \tau_v \otimes \sp(2))$ is completely described by Lemma \ref{twistlem} and recovers the term $l_1$.

We now study $\rho_B \otimes \xcyc^{1/2}=\bigoplus\limits_j \rho_{e_j}$. Observe that for any irreducible summand $\rho$ which is not symplectic, $\rho \oplus \rho^*$ is a subrepresentation and by Lemma \ref{twistlem} $$W((\rho \oplus \rho^*) \otimes \tau_v)=W(\rho \oplus \rho^*)^{\dim \tau_v} ((\det \tau_v)(-1))^{\dim \rho} (-1)^{2\dim \tau_v + 2\langle \mathds{1}, \tau_v \rangle + 2\langle \eta_v, \tau_v \rangle + 2\langle \hat{\rho}, \tau_v \rangle},$$ since $\hat{\rho}=\hat{\rho}^*$ is the zero representation.
	
Since $\dfrac{W((\rho \oplus \rho^*) \otimes \tau_v)}{W(\rho \oplus \rho^*)^{\dim \tau_v}}$ only depends on $\dim \rho$ and the non-symplectic summands always arise in this way, we obtain $$\dfrac{W(\rho_B \otimes \xcyc^{1/2} \otimes \tau_v)}{W(\rho_B \otimes \xcyc^{1/2})^{\dim \tau_v}}= ((\det \tau_v)(-1))^{\frac{1}{2}\dim \rho_B} (-1)^{m \dim \tau_v + m \langle \mathds{1},\tau_v \rangle + m \langle \eta_v, \tau_v \rangle + \langle \bigoplus_j \hat{\rho}_{e_j}, \tau_v \rangle},$$ where $m$ is the number of irreducible summands of $\rho_B \otimes \xcyc^{1/2}$.

For a fixed subrepresentation $\rho_e$, observe that each irreducible summand has dimension  $f=[K(\zeta_e):K]$ and hence $\rho_e$ has $\frac{\tilde{\varphi}(e)}{[K(\zeta_e):K]}$ summands. Moreover, there are $m_e$ such representations of the form $\rho_e$ so we get $$\dfrac{W(\rho_B \otimes \xcyc^{1/2} \otimes \tau_v)}{W(\rho_B \otimes \xcyc^{1/2})^{\dim \tau_v}}= ((\det \tau_v)(-1))^{\frac{1}{2}\dim \rho_B} (-1)^l,$$ where $l=\langle \bigoplus_j \hat{\rho}_{e_j},\tau_v \rangle + \sum\limits_{e \in \N} m_e \left( \dfrac{\tilde{\varphi}(e)}{[K(\zeta_e):K]}(\langle \mathds{1},\tau_v \rangle + \langle \eta, \tau_v \rangle + \dim \tau_v)\right)$. Furthermore, $(-1)^{\langle \bigoplus_j \hat{\rho}_{e_j}, \tau_v \rangle}=(-1)^{\sum_j \langle \rho_{e_j,f_j}, \tau_v \rangle}$ by Lemma \ref{rhoeftau} hence $l=l_2$. Collating the above computations completes the proof.
\end{proof}

\subsection{The global case}
	We now wish to look at the contribution of a global twist $\tau$, using our relation in Theorem \ref{twistthm}. To consider the global contribution, we should also take into account the infinite places (cf. Remark \ref{infty}). Observe that for each finite place, we obtained a factor of $((\det \tau_v)(-1))^{\dim A}$. Viewing $\det \tau$ as an adelic character which is trivial on $\mathbb{Q}^{\times}$ instead, we see that $$\sign(\det \tau):= \prod\limits_{v| \infty, \, v \in M_{\KK}} (\det \tau_v)(-1) = \prod\limits_{v < \infty, \, v \in M_{\KK}} (\det \tau_v)(-1).$$
	
	Recall that we define $$W(A/\KK,\tau):=\prod\limits_{v \in M_{\KK}} W(A/\KK_v,\tau_v),$$ where $M_{\KK}$ is the set of all places of $\KK$.
	
\begin{theorem}
\label{globtwist}
	Let $\KK$ be a global field, $A/\KK$ an abelian variety and $\tau$ a finite dimensional Artin representation with real character. Let $M_{\KK}$ be the set of places of $\KK$. For each finite place $v \in M_{\KK}$, write $\rho_{A/\KK_v}=\rho_{B_v} \oplus (\rho_{T_v} \otimes \xcyc^{-1} \otimes \sp(2))$ where $\rho_{B_v},\rho_{T_v}$ have finite image of inertia. If $\tau_v$ is ramified, assume ${A/\KK_v}$ has tame reduction. Then $$W(A/\KK, \tau) = W(A/\KK)^{\dim \tau} (\sign (\det \tau))^{\dim A} \cdot T \cdot S,$$ where
	\begin{eqnarray*}
	\sign(\det \tau) &=& \prod\limits_{v| \infty, \, v \in M_{\KK}} (\det \tau_v)(-1), \\
	T &=& \prod\limits_{v<\infty, v \in M_{\KK}}(-1)^{\langle \rho_{T_v},\tau_v \rangle + \dim \tau \langle \mathds{1}, \rho_{T_v} \rangle}, \\
	S &=& \prod\limits_{v<\infty, v \in M_{\KK}} \prod\limits_{e \in \N} \left( (-1)^{\langle \rho^v_{e,f}, \tau_v \rangle + \frac{\tilde{\varphi}(e)}{[\KK_v(\zeta_e):\KK_v]}(\langle \mathds{1},\tau_v \rangle + \langle \eta_v, \tau_v \rangle + \dim \tau)}\right)^{m_{e,v}},
	\end{eqnarray*}
	$m_{e,v}$ is the multiplicity of representations $\rho_e$ which are subrepresentations of $\rho_{B_v}$ (as defined previously in Definition \ref{rhoe}), $\rho^v_{e,f}$ is the Artin representation $\rho_{e,f}$ for $\Gal(\KK_v^{\rm{sep}}/\KK_v)$ (Definition \ref{rhoef}) and $\eta_v$ is the unramified quadratic character of $\KK_v^{\times}$.
\end{theorem}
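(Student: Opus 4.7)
The plan is to prove the global formula by factoring $W(A/\KK,\tau)=\prod_{v\in M_\KK} W(A/\KK_v,\tau_v)$, substituting the local formula (Theorem \ref{twistthm}) at each finite place where $\tau_v$ is ramified (where tame reduction is assumed), the multiplicativity of $\varepsilon$-factors under unramified twists (Theorem \ref{props}(iii)) at the remaining finite places, and Remark \ref{infty} at the Archimedean places; then collect terms using the product formula for Hecke characters.

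First I would split the product into finite and infinite contributions. At each finite place, Theorem \ref{twistthm} (or its unramified-twist analogue) yields a factor of the shape
\begin{equation*}
W(A/\KK_v)^{\dim\tau}\;\cdot\;((\det\tau_v)(-1))^{\dim A}\;\cdot\;(-1)^{l_{1,v}+l_{2,v}},
\end{equation*}
where $l_{1,v}$ and $l_{2,v}$ are the local exponents from Theorem \ref{twistthm}. At the Archimedean places, Remark \ref{infty} gives $W(A/\KK_v,\tau_v)=(-1)^{(\dim A)(\dim\tau)}$, which is exactly $W(A/\KK_v)^{\dim\tau}$ because $W(A/\KK_v)=(-1)^{\dim A}$ for Archimedean $v$. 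Multiplying across every place, the $W(A/\KK_v)^{\dim\tau}$ factors collapse into $W(A/\KK)^{\dim\tau}$, the $(-1)^{l_{1,v}}$ factors assemble into $T$, and the $(-1)^{l_{2,v}}$ factors assemble into $S$.

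It remains to deal with the determinant contribution, which appears only at finite places. Here I would invoke the product formula: $\det\tau$ defines an adelic Hecke character of $\KK$ which is trivial on $\KK^\times$, and in particular on $-1$, so $\prod_{v\in M_\KK}(\det\tau_v)(-1)=1$. Since each factor is $\pm1$, this rearranges to
\begin{equation*}
\prod_{v<\infty}((\det\tau_v)(-1))^{\dim A}\;=\;\prod_{v\mid\infty}((\det\tau_v)(-1))^{\dim A}\;=\;(\sign(\det\tau))^{\dim A},
\end{equation*}
which matches the second factor in the statement.

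The main obstacle I foresee is the finite places where $\tau_v$ is unramified but $A/\KK_v$ has wild reduction, since Theorem \ref{twistthm} does not apply there. At such a place, I would instead apply Theorem \ref{props}(iii) directly to obtain $W(\rho_A\otimes\tau_v)=W(\rho_A)^{\dim\tau_v}\cdot\zeta$ for a root of unity $\zeta$ coming from $\det\tau_v$; self-duality of $\tau_v$ forces $\det\tau_v$ to be either trivial or the unramified quadratic character $\eta_v$, both of which satisfy $(\det\tau_v)(-1)=1$, so the determinant factor at these places is $1$. Moreover, the terms $\langle \rho_{T_v},\tau_v\rangle$ and $\langle\rho^v_{e,f},\tau_v\rangle$ appearing in $T$ and $S$ vanish at such $v$ because $\rho^v_{e,f}$ is by construction faithful on a nontrivial ramified quotient and hence has no unramified summands, while the ``tame'' decomposition $m_{e,v}$ can be taken to be zero. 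Consequently these places contribute $W(A/\KK_v)^{\dim\tau}$ on both sides, and the formula goes through unchanged.
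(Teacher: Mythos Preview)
Your overall strategy—apply Theorem \ref{twistthm} place by place, use Remark \ref{infty} at infinity, and invoke the Hecke-character product formula to convert $\prod_{v<\infty}(\det\tau_v)(-1)$ into $\sign(\det\tau)$—is exactly what the paper does; its proof is literally the one line ``straightforward consequence of Theorem \ref{twistthm}'' together with the product-formula paragraph preceding the statement.

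Your treatment of the places where $\tau_v$ is unramified but $A/\KK_v$ has wild reduction contains genuine errors, though. First, the claim that $\langle\rho_{T_v},\tau_v\rangle$ vanishes is false: $\rho_{T_v}$ may well have unramified summands (for instance $\mathds{1}$ or $\eta_v$), so this pairing need not be zero. The correct observation is rather that Lemma \ref{twistlem}(i), which produces the $T$-factor, rests only on Lemma \ref{mult} and never uses tameness; hence the $T$-factor is already valid at \emph{every} finite place and needs no special pleading. Second, the ``root of unity $\zeta$'' you extract from Theorem \ref{props}(iii) is $(\det\tau_v)(\pi_K^{a(\rho)+n(\psi)\dim\rho})$, not $(\det\tau_v)(-1)$; the fact that $(\det\tau_v)(-1)=1$ for unramified $\det\tau_v$ tells you nothing about $\zeta$, and indeed if $\det\tau_v=\eta_v$ and the conductor exponent is odd you get $\zeta=-1$. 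Third, Theorem \ref{props}(iii) is stated for Weil representations, whereas $\rho_A$ carries nontrivial monodromy via the $\sp(2)$ piece, so applying it to $\rho_A$ wholesale requires further justification. The paper itself does not spell out the wild-place verification either, so you are right to flag it—but the argument you supply does not close the gap.
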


\begin{proof}
	This is a straightforward consequence of Theorem \ref{twistthm}.
\end{proof}

\begin{remark}
	Note that in the dual case, both $\tilde{\varphi}(e)/[\KK_v(\zeta_e):\KK_v]$ and $\langle \rho^v_{e,f},\tau_v \rangle$ are even so $S$ has no contribution from such summands (cf. Lemma \ref{induce}iv).
\end{remark}

\begin{corollary}\cite[Proposition 1]{Sab13}
\label{coprimetwist}
	Let $\KK$ be a global field, $A/\KK$ an abelian variety and $\tau$ a self-dual Artin representation of $\Gal(\KK^{\rm{sep}}/\KK)$. Assume the conductor $\mathfrak{N}$ of $A/\KK$ is coprime to the conductor of $\tau$. Then $$W(A/\KK, \tau) = W(A/\KK)^{\dim \tau} ((\det \tau)(\mathfrak{N})) (\sign (\det \tau))^{\dim A}.$$
\end{corollary}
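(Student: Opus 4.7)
The plan is to establish a clean local identity at every finite place and then multiply up, with Archimedean places handled separately. At each Archimedean place, Remark~\ref{infty} gives $W(A/\KK_v,\tau_v)=(-1)^{(\dim A)(\dim\tau)}=W(A/\KK_v)^{\dim\tau}$ with no $\det\tau$ factor, so these places contribute only to the $W(A/\KK)^{\dim\tau}$ term on the right. The whole argument therefore reduces to proving the local identity
\[
W(A/\KK_v,\tau_v)=W(A/\KK_v)^{\dim\tau}(\det\tau_v)(\pi_v^{n_v})((\det\tau_v)(-1))^{\dim A}
\]
at every finite $v$, where $n_v=a(\rho_{A/\KK_v})$. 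The coprime-conductor hypothesis splits this into two exhaustive cases.

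Case 1: $\tau_v$ is unramified. I apply Theorem~\ref{props}(iii) directly with $\rho=\rho_{A/\KK_v}$ and the unramified $\rho'=\tau_v$. With our convention $n(\psi)=0$ this gives $\varepsilon(\rho_A\otimes\tau_v)=\varepsilon(\rho_A)^{\dim\tau}(\det\tau_v)(\pi_v^{n_v})$, and passing to root numbers yields exactly the desired identity. Note that $(\det\tau_v)(-1)=1$ here because $-1\in\mathcal{O}_v^\times$ and $\tau_v$ is trivial on inertia, so the $((\det\tau_v)(-1))^{\dim A}$ factor is harmless.

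Case 2: $\tau_v$ is ramified. Coprimality now forces $A/\KK_v$ to have good reduction, so $\rho_A$ is unramified, $n_v=0$ and $W(A/\KK_v)=1$. I apply Theorem~\ref{props}(iii) with the roles swapped, taking $\rho=\tau_v$ and $\rho'=\rho_A$. Because $\det\rho_A$ is a power of $\xcyc$, its value at $\pi_v^{a(\tau_v)}$ is a positive real number and so invisible to the root number, giving $W(A/\KK_v,\tau_v)=W(\tau_v)^{2\dim A}$. The decisive step is to invoke the self-duality of $\tau_v$ together with Corollary~\ref{dualrep}(ii), which gives $W(\tau_v)^2=(\det\tau_v)(-1)$, and hence $W(A/\KK_v,\tau_v)=((\det\tau_v)(-1))^{\dim A}$; since every other factor on the right is $1$, the local identity holds.

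Finally I multiply the local identities over all $v$. The products $\prod_{v<\infty}(\det\tau_v)(\pi_v^{n_v})$ assemble, by definition, into $(\det\tau)(\mathfrak{N})$, since $\det\tau_v$ is unramified at every $v\mid\mathfrak{N}$ by coprimality. For the sign term, the adelic triviality of $\det\tau$ on $\KK^\times$ applied to $-1$ gives $\prod_v(\det\tau_v)(-1)=1$, whence $\prod_{v<\infty}((\det\tau_v)(-1))^{\dim A}=\prod_{v\mid\infty}((\det\tau_v)(-1))^{\dim A}=\sign(\det\tau)^{\dim A}$. Collecting terms reproduces the stated formula. The only real subtlety in this plan is Case~2: without the self-duality of $\tau_v$ one would be stuck with an uncontrolled $W(\tau_v)^{2\dim A}$, and it is precisely the hypothesis $\tau\cong\tau^*$ (via Corollary~\ref{dualrep}(ii)) that converts this into $((\det\tau_v)(-1))^{\dim A}$ and makes the local identity hold on the nose.
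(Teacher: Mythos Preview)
Your argument is correct and self-contained. The paper does not actually spell out a proof of this corollary; it records it as a consequence of Theorem~\ref{globtwist} (with a citation to Sabitova). Deriving it from Theorem~\ref{globtwist} would mean specialising the terms $T$ and $S$ under the coprime-conductor hypothesis and identifying their product with $(\det\tau)(\mathfrak{N})$, and one would still have to handle separately the places where $\tau_v$ is unramified but $A$ has wild reduction (since the $m_{e,v}$ are only defined in the tame case). Your direct place-by-place argument via Theorem~\ref{props}(iii) and Corollary~\ref{dualrep}(ii) bypasses all of this machinery, is cleaner, and makes transparent that no tameness hypothesis is needed; it is essentially Sabitova's original argument.

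One small technical remark on Case~1: you invoke Theorem~\ref{props}(iii) with $\rho=\rho_{A/\KK_v}$, which is in general a genuine Weil--Deligne representation (through the $\sp(2)$ piece), whereas Theorem~\ref{props} in the paper is only asserted for Weil representations. The twisting-by-unramified formula does extend to Weil--Deligne representations, so the step is fine, but strictly speaking you are using a slightly stronger statement than the paper provides. If you want to stay entirely within the paper's toolkit, you can decompose $\rho_A=\rho_B\oplus(\rho_T\otimes\xcyc^{-1}\otimes\sp(2))$ and apply Theorem~\ref{props}(iii) to the Weil representation $\rho_B$ and Lemma~\ref{mult} to the $\sp(2)$ piece; the same local identity drops out.
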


\subsection{Recovering $\rho_A$}
Before considering the twisted root number globally, we shall use the theory we've developed so far to reconstruct certain summands of $\rho_A$.
	
\begin{proposition}
	Let $A/K$ be an abelian variety and suppose that $\rho_A$ is tamely ramified. Write $\rho_A=\rho_B \oplus (\rho_T \otimes \xcyc^{-1} \otimes \sp(2))$ as in Fact \ref{decomp} and assume the eigenvalues (or their orders) including multiplicity of $\rho_B(\iota), \rho_B(\iota)$ are known. Then there exists a unique isomorphism class for $\rho_A$ as a complex inertia-representation.
\end{proposition}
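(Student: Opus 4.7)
The plan is to reduce the question to recovering $\rho_B|_I$ and $\rho_T|_I$ individually from the given eigenvalue data, and then to invoke the general principle that complex representations of topologically cyclic groups are determined up to isomorphism by the eigenvalues of a generator.

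First I would untangle the decomposition $\rho_A\cong\rho_B\oplus(\rho_T\otimes\xcyc^{-1}\otimes\sp(2))$ as an inertia representation. The cyclotomic character is unramified and so restricts trivially to $I$. The Weil--Deligne representation $\sp(2)$ has trivial inertia action (the monodromy operator $N$ is a datum separate from the $I$-action and is lost when passing to inertia representations), so as complex $I$-representations
\[
\rho_A|_I \;\cong\; \rho_B|_I \;\oplus\; \bigl(\rho_T|_I\bigr)^{\oplus 2}.
\]
It therefore suffices to show that $\rho_B|_I$ and $\rho_T|_I$ are each determined by the eigenvalues (with multiplicity) of $\iota$.

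Next, by the tameness assumption, both representations factor through the tame quotient of $I$, which is topologically procyclic and topologically generated by $\iota$. Since $\rho_B$ and $\rho_T$ have finite image of inertia by Fact~\ref{decomp}, the operators $\rho_B(\iota)$ and $\rho_T(\iota)$ have finite order and so are diagonalisable over $\C$. Hence each of $\rho_B|_I$ and $\rho_T|_I$ splits as a direct sum of characters of $\langle\iota\rangle$, and this decomposition is determined up to isomorphism by the multiset of eigenvalues of $\iota$.

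Finally, since the characteristic polynomial of $\rho_B(\iota)$ has integer coefficients (Remark after Fact~\ref{decomp}) and $\rho_T$ takes values in $\GL_r(\Z)$, both characteristic polynomials are products of cyclotomic polynomials; in particular the multiplicity of each primitive $e$\textsuperscript{th} root of unity as an eigenvalue is the same as the multiplicity of $\Phi_e$ in the characteristic polynomial. Consequently knowing only the \emph{orders} of the eigenvalues with multiplicity is equivalent to knowing the eigenvalues themselves with multiplicity. Combining with the previous paragraph recovers $\rho_B|_I$ and $\rho_T|_I$ up to isomorphism, and therefore $\rho_A|_I$ as well. The only conceptual step that requires attention is the vanishing of the inertia action on the $\sp(2)$ factor; everything else is a routine invocation of the character theory of finite cyclic groups together with the integrality of the characteristic polynomials.
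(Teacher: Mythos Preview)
Your proof is correct and follows essentially the same idea as the paper: the paper's one-line argument is simply that $\rho_A(I)$ is abelian (indeed cyclic, being a finite quotient of the procyclic tame inertia), so the representation is determined by the eigenvalues of $\iota$. You spell out explicitly why the eigenvalues of $\rho_A(\iota)$ are recoverable from those of $\rho_B(\iota)$ and $\rho_T(\iota)$ (triviality of $\xcyc$ and $\sp(2)$ on inertia) and why orders suffice (integrality of the characteristic polynomials), but the underlying mechanism is the same.
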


\begin{proof}
	This is trivial from the fact that $\rho_A(I)$ is necessarily abelian.
\end{proof}

We briefly mention that we can reconstruct a symplectic representation of the form $\rho_e$ from our $\rho_{e,f}$ in the self-dual case and relate this to a subrepresentation of $\rho_B \otimes \xcyc^{1/2}$.

\begin{proposition}
	Let $e \in \N$ be such that $p \nmid e$ and the order $f$ of $q \bmod{e}$ is such that $f \geqslant 2$ is even and $q^{f/2}\equiv -1 \bmod{e}$. Let $\nu$ be an unramified character of order $2f$. Then $\rho_{e,f} \otimes \nu$ is a symplectic Weil representation $\rho$ of the form $\rho_e$ such that every irreducible summand is also symplectic.
	
	Moreover, if $B/K$ is an abelian variety with tame, potentially good reduction such that $\rho_B(\iota)$ has an eigenvalue $e$, then $\rho_B \otimes \xcyc^{1/2}$ has a subrepresentation of the form $\rho_e$ built from the irreducible summands of $\rho$ and $\rho_{e,f}$. In particular, if every summand of the subrepresentation of the form $\rho_e$ is symplectic, then it is isomorphic to $\rho$.
\end{proposition}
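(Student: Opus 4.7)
My plan is to handle the two assertions in turn, using the push-pull formula throughout.

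First, I would show that each irreducible summand of $\rho_{e,f}$ becomes symplectic after the twist by $\nu$. By Definition \ref{rhoef} combined with Lemma \ref{induce}, every such summand has the shape $\Ind_{\langle \iota \rangle}^{\Gal(L/K)} \chi$, where $L = K(\zeta_e, \pi_K^{1/e})$ has residue degree $f$ over $K$ (so $\Frob^f$ is trivial in $\Gal(L/K)$, forcing the $\gamma$ of Lemma \ref{induce} to be trivial) and $\chi$ is a faithful character of $\langle \iota \rangle$ of order $e$. Push-pull gives $\Ind \chi \otimes \nu \cong \Ind_{\langle \iota, \Frob^f \rangle}(\chi \otimes \Res \nu)$. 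Since $\nu$ is unramified of order $2f$, the restriction is trivial on $\iota$ and sends $\Frob^f$ to $\nu(\Frob_K)^f = -1$; the twist is therefore of the form $\Ind(\chi \otimes \gamma)$ with $\gamma$ of order $2$, which is self-dual and symplectic by Lemma \ref{induce}(iv)--(v). Because an unramified twist preserves the inertia action, the $\iota$-spectrum still covers all $\tilde{\varphi}(e)$ primitive $e^{\rm{th}}$ roots of unity exactly once, so $\rho_{e,f} \otimes \nu$ is of the form $\rho_e$ in the sense of Definition \ref{rhoe}.

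Next, I would decompose $\rho_B \otimes \xcyc^{1/2}$ into Weil-irreducible summands and restrict attention to those whose inertia eigenvalues have order $e$. By Lemma \ref{induce} each has dimension $f$, and by the classification of tame Weil representations with finite image of inertia, each is an unramified twist $\Ind \chi \otimes \mu$ of an irreducible summand of $\rho_{e,f}$. The integrality of the characteristic polynomial of $\rho_B(\iota)$ (cf.\ the remark following Lemma \ref{reduce}) guarantees that each Galois orbit of primitive $e^{\rm{th}}$ root eigenvalues appears with the same multiplicity, so I would select one summand per orbit to assemble a subrepresentation of $\rho_B \otimes \xcyc^{1/2}$ of the form $\rho_e$. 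Each selected piece is an unramified twist of a summand of $\rho_{e,f}$: the trivial twist returns an orthogonal summand of $\rho_{e,f}$, while the twist by $\nu$ returns a symplectic summand of $\rho$. This is the sense in which the subrepresentation is built from summands of $\rho$ and $\rho_{e,f}$. For the \emph{in particular} clause, if every chosen summand is symplectic then by Lemma \ref{induce}(v) each has the form $\Ind(\chi \otimes \gamma)$ with $\gamma$ of order $2$, which by the first paragraph is isomorphic to the corresponding symplectic summand of $\rho$; since exactly one summand per orbit is chosen and $\rho$ contains exactly one symplectic summand per orbit, the subrepresentation is isomorphic to $\rho$.

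The hard part will be pinning down the precise meaning of ``built from the irreducible summands of $\rho$ and $\rho_{e,f}$''. A generic irreducible summand of $\rho_B \otimes \xcyc^{1/2}$ with inertia of order $e$ may have a $c$-value outside $\{\pm 1\}$ and thus need not be isomorphic, as a Weil representation, to an actual summand of $\rho$ or $\rho_{e,f}$. The interpretation I would adopt is the one above: every such summand is an unramified twist of some summand of $\rho_{e,f}$, with the untwisted and the $\nu$-twisted versions assembling into the orthogonal representation $\rho_{e,f}$ and the symplectic representation $\rho$ respectively. Under this reading, the push-pull calculation of the first paragraph delivers the required structural description directly.
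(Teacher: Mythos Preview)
Your argument is correct and close to the paper's, though you unpack more than the paper does. For the first assertion, the paper simply cites Lemma~\ref{FSlemma} (the Frobenius--Schur indicator computation) to pass from orthogonal to symplectic under the twist by $\nu$; you instead run the push-pull computation directly and invoke Lemma~\ref{induce}(iv)--(v). These are essentially the same argument, since the proof of Lemma~\ref{FSlemma} itself goes through push-pull to identify $\theta\otimes\nu$ with $\Ind(\chi\otimes\gamma\otimes\Res\nu)$, and the orthogonal/symplectic dichotomy is ultimately controlled by Lemma~\ref{induce}(v) and Proposition~\ref{Sab0} in both routes. Your version has the minor advantage of being self-contained here rather than deferring to the indicator calculation.

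For the second assertion, you are more careful than the paper, which disposes of it in one sentence: ``the conditions on $e$ force irreducible summands to be self-dual and there are exactly two choices (a symplectic or orthogonal choice).'' Your worry about the phrase ``built from'' is legitimate: a priori an irreducible summand of $\rho_B\otimes\xcyc^{1/2}$ with inertia of order $e$ need only be an unramified twist of a summand of $\rho_{e,f}$, and the twist parameter need not lie in $\{\pm 1\}$. The paper's reading is that, since $q^{f/2}\equiv -1\bmod e$, each $\langle q\rangle$-orbit of primitive $e$th roots is closed under inversion, so a self-dual choice of summand always exists (Lemma~\ref{induce}(iv)); once self-dual, Lemma~\ref{induce}(v) leaves exactly the orthogonal and symplectic options, i.e.\ summands of $\rho_{e,f}$ or of $\rho$. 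Your interpretation via unramified twists is compatible with this and makes the ``in particular'' clause go through the same way.
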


\begin{proof}
	Recall that by construction, every irreducible summand of $\rho_{e,f}$ is orthogonal and hence every summand of $\rho_{e,f} \otimes \nu$ is symplectic by Lemma \ref{FSlemma}. Since the conditions on $e$ force irreducible summands to be self-dual and there are exactly two choices (a symplectic or orthogonal choice), the rest of the proposition follows.
\end{proof}

\section{All quadratic twists with equal parity}
\label{quad}
There are three different notions of parity for an abelian variety: analytic parity via the root number; parity of the rank of the Mordell-Weil group; and parity of the $p^{\infty}$-Selmer group for a given prime $p$ (referred to as $p$-parity). These are equivalent subject to the conjectures of Shafarevich-Tate and Birch--Swinnerton-Dyer. The equivalence of analytic parity and $p$-parity has been proven for elliptic curves over $\Q$ \cite[Theorem 1.4]{DD10}; Morgan has also shown equivalence to $2$-parity for Jacobians of hyperelliptic curves over particular quadratic extensions \cite[Theorem 1.1]{Mor15}.

\begin{example}\cite[Example 9.2]{MR10}
	Let $E/\KK$ be an elliptic curve over a number field with complex multiplication defined over $\KK$. Then the global root number of any quadratic twist $W(E'/\KK)$ is equal to $W(E/\KK)$; the same statement for $2$-parity and parity of the ranks of $E'(\KK)$ is true.
\end{example}

Mazur and Rubin have previously determined necessary conditions for an elliptic curve whose quadratic twists all have equal $2$-Selmer parity: $\KK$ must be totally imaginary and $E/\KK$ has good or additive reduction everywhere \cite[Theorem 9.5]{MR10}. On the other hand, the Dokchitsers have shown that this $2$-parity phenomenon holds for elliptic curves if and only if the equivalent root number statement does \cite[Corollary 1.6]{DD11}. They have further derived necessary and sufficient conditions in this case \cite[Theorem 1]{DD09}. We now extend their result to abelian varieties, continuing in the terminology of \cite{DD09}.

\begin{definition}
	Let $A/\KK$ be an abelian variety over a local or global field. Then we call $A/\KK$ \emph{lawful} if $W(A/\FF)=1$ for every quadratic extension $\FF/\KK$. We say a curve is lawful if its Jacobian is.
\end{definition}

Observe that if $\KK$ is a global field, then $A/\KK$ being lawful is equivalent to all quadratic twists of $A$ having the same root number. Moreover, $A/\KK$ is lawful if and only if $A/\KK_v$ is lawful for all places $v$ of $\KK$.\footnote{If $A/\KK_v$ is not lawful for some $v$, then by imposing only finitely many local conditions we can find a quadratic extension of $\KK$ with negative root number.} Lawful abelian varieties $A/\KK$ come in two flavours depending on $W(A/\KK)$: \emph{lawful evil} if $W(A/\KK)=-1$ and \emph{lawful good} if $W(A/\KK)=1$. Note that the lawful evil case implies that the Mordell-Weil rank must increase in \emph{every} quadratic extension. We list some examples of lawful genus two hyperelliptic curves in Appendix \ref{lawfultable}.

\begin{lemma}
\label{inflaw}
	Let $\KK$ be a number field and let $A/\KK$ be lawful. Then either $\dim A$ is even or $\KK$ has no real places.
\end{lemma}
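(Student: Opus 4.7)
The plan is to prove the contrapositive: if $g := \dim A$ is odd and $\KK$ has a real place $v_0$, I construct a quadratic extension $\FF/\KK$ with $W(A/\FF) = -1$, contradicting lawfulness.

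I build $\FF$ via weak approximation for $\KK^\times/(\KK^\times)^2$ and Chebotarev density, prescribing the associated quadratic character $\chi$ with the following local profile: $\chi_{v_0}$ is nontrivial (so $v_0$ ramifies in $\FF$, becoming complex); $\chi_v$ is trivial at every other archimedean place of $\KK$ (so those places split in $\FF$); $\chi_v$ is trivial at every finite prime $v\mid\mathfrak{N}$, where $\mathfrak{N}$ is the conductor of $A$ (so such primes split in $\FF$); $\chi_{v_1}$ is a ramified quadratic character with $\chi_{v_1}(-1)=-1$ at a single auxiliary prime $v_1\nmid \mathfrak{N}$ of good reduction for $A$ (for instance with odd residue characteristic $q_{v_1}\equiv 3\pmod{4}$); and $\chi_v$ is unramified at all remaining finite places. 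The Hilbert product formula $\prod_v \chi_v(-1)=1$ is automatically satisfied, since the only $-1$ contributions come from $v_0$ and $v_1$, so these local data genuinely assemble into a global quadratic character.

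With this $\FF$ in hand, the root number $W(A/\FF)=\prod_w W(A/\FF_w)$ can be computed explicitly. By Remark \ref{infty}, each archimedean place of $\FF$ contributes $(-1)^g$; since the total number of archimedean places of $\FF$ is $2N_\infty(\KK)-1$ (an odd number, as only $v_0$ ramifies among the real places of $\KK$), the archimedean contribution to $W(A/\FF)$ is $(-1)^{g(2N_\infty(\KK)-1)}=(-1)^g=-1$. Each split prime $v\mid \mathfrak{N}$ contributes $W(A/\KK_v)^2=1$. At every remaining finite place $v$, including $v_1$, the variety $A$ has good reduction over $\KK_v$, and hence over each $\FF_w$ as well (good reduction is preserved under base change), so the local root number equals $+1$. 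Combining all contributions gives $W(A/\FF)=-1$, as required.

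The main obstacle is establishing the existence of $\FF$ with all of the prescribed local behaviour simultaneously. The Hilbert product formula is the only genuine global constraint, and it is met by the construction above. Apart from this, the constructibility reduces to weak approximation on $\KK^\times/(\KK^\times)^2$ (which allows us to prescribe arbitrary classes modulo squares at finitely many places) together with a Chebotarev density argument guaranteeing infinitely many auxiliary primes $v_1\nmid \mathfrak{N}$ of good reduction of $A$ whose ramified quadratic characters satisfy $\chi_{v_1}(-1)=-1$.
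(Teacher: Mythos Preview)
Your argument is correct in outline, but it is more elaborate than needed, and the paper's intended proof is much shorter. The lemma is stated immediately after the observation that $A/\KK$ is lawful if and only if $A/\KK_v$ is lawful at every place $v$; at a real place $v_0$ the unique quadratic extension of $\KK_{v_0}\cong\R$ is $\C$, and Remark~\ref{infty} gives $W(A/\C)=(-1)^{\dim A}$, so local lawfulness there already forces $\dim A$ to be even. That is the whole argument.

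Your explicit construction of a global $\FF$ is in effect the content of the paper's footnote justifying the local-to-global direction, specialised to the archimedean place. Two simplifications are worth noting. First, neither the auxiliary prime $v_1$ nor the requirement that $\chi$ be unramified at all remaining finite places is needed: at every prime of good reduction the local root number of $A$ is $+1$ regardless of how $\chi$ behaves there. Once these conditions are dropped, ordinary weak approximation on $\KK^\times$ produces $\alpha$ negative at $v_0$, positive at the other real embeddings, and a local square at each $v\mid\mathfrak{N}$, and $\FF=\KK(\sqrt{\alpha})$ already does the job. Second, the identity $\prod_v\chi_v(-1)=1$ is an automatic consequence of $\chi$ being global rather than a constraint one must engineer; framing it as ``the only genuine global constraint'' for patching local characters is slightly off, since weak approximation at finitely many places imposes no reciprocity condition at all, and you never need to prescribe $\chi_v$ at every place.
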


\begin{theorem}
\label{lawful}
	Let $A/K$ be an abelian variety over a non-Archimedean local field. Write $\rho_{A/K} = \rho_{B/K} \oplus (\rho_{T,K} \otimes \xcyc^{-1} \otimes \sp(2))$. Assume that $\rho_{B/K}$ is tamely ramified and $\rho_T(I)$ is abelian such that if $A/K$ doesn't have potentially good reduction, then the cardinality $q$ of the residue field of $K$ is odd. Write $$\rho_{B/K} \otimes \xcyc^{1/2} = \bigoplus\limits_{e \in \mathbb{N}} \rho_e^{m_e}.$$
	
	If $q$ is odd, we moreover let $\eta_K$ be the unramified quadratic character of $K$ and let $\chi_1, \chi_2$ be the ramified quadratic characters of $K$ and define $$\langle \rho_{T,K}, \mathds{1} \rangle = n_1, \qquad \langle \rho_{T,K}, \eta_K \rangle = n_2, \qquad \langle \rho_{T,K}, \chi_1 \rangle = n_3, \qquad \langle \rho_{T,K}, \chi_2 \rangle = n_4.$$
	
	Let $W_g=\prod\limits_{2 \nmid e} W_{q,e}^{m_e} \prod\limits_{e=4 \, or \, 2||e} W_{q,e/2}^{m_e}$, where $2||e$ means that $v_2(e)=1$, i.e. $e \equiv 2 \bmod{4}$.
	 
	(i) If $p=2$, then $A/K$ is lawful if and only if $W_g=1$.
	
	(ii) If $q$ is odd, then $A/K$ is lawful if only if $n_1 \equiv n_2 \mod{2},$ $n_3 \equiv n_4 \mod{2},$ and $W_g=(-1)^{n_1+n_3}$.
\end{theorem}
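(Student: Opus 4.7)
The plan is to compute $W(A/L)$ directly via Theorem \ref{rootno} for each quadratic extension $L/K$, since $A/K$ is lawful precisely when every such root number equals $1$. We split according to the ramification type of $L/K$.

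For $L/K$ unramified quadratic, the residue field of $L$ has cardinality $q^2$, so by Remark \ref{qsquare} every $W_{q^2,e}$ appearing in Theorem \ref{rootno} trivialises, both in the potentially good part and in the $W_{q^2,2}^{m_T}$-factor. The tame inertia is preserved ($I_L = I_K$), so the multiplicities $m_e$ are unchanged, leaving
\begin{equation*}
W(A/L) = (-1)^{\langle \mathds{1},\rho_{T,L}\rangle}.
\end{equation*}
By Frobenius reciprocity $\langle \mathds{1},\rho_{T,L}\rangle = \langle \mathds{1}\oplus\eta_K,\rho_{T,K}\rangle = n_1 + n_2$, so this extension contributes the condition $n_1 \equiv n_2 \pmod 2$.

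Now suppose $L = L_j$ is the ramified quadratic corresponding to $\chi_j$ in the $q$-odd case, so $L_j/K$ is tamely totally ramified and a tame inertia generator satisfies $\iota_{L_j} = \iota_K^2$. An eigenvalue of $\rho_B(\iota_K)$ of order $e$ therefore squares to one of order $e/\gcd(e,2)$; using $\tilde\varphi(e) = \tilde\varphi(2e)$ for odd $e$ and $\varphi(2e) = 2\varphi(e)$ for even $e \geqslant 2$, a short count yields
\begin{equation*}
m^{L_j}_e = m_e + m_{2e}\text{ for odd }e,\qquad m^{L_j}_2 = m_4,\qquad m^{L_j}_e = 2m_{2e}\text{ for even }e \geqslant 3.
\end{equation*}
The last family contributes trivially as $W_{q,e}^{2m_{2e}} = 1$, and collecting the remaining terms reproduces exactly the product $W_g$ of the statement. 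For the toric factor, $N_{L_j/K}(-1) = 1$ forces $(\det\rho_{T,L_j})(-1) = (\det\rho_{T,K})(1) = 1$, so Lemma \ref{mult} gives this contribution as $(-1)^{\langle \mathds{1},\rho_{T,L_j}\rangle} = (-1)^{n_1 + n_{2+j}}$, again by Frobenius reciprocity. Enforcing $W(A/L_j) = W_g(-1)^{n_1 + n_{2+j}} = 1$ for both $j \in \{1,2\}$ then yields $n_3 \equiv n_4 \pmod 2$ and $W_g = (-1)^{n_1 + n_3}$, completing part (ii).

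The case $p = 2$ is cleaner: the hypothesis forces $\rho_{T,K} = 0$, so only $\rho_{B,K}$ matters. The unramified case again gives $W(A/L) = 1$. A ramified quadratic $L/K$ is now necessarily wildly ramified, so $I_L$ and $I_K$ have the same tame quotient; thus $\rho_{B,K}(\iota_L) = \rho_{B,K}(\iota_K)$ and $m^L_e = m_e$, and with $q_L = q$ we obtain $W(A/L) = W(A/K)$. Since every $e$ occurring in $\rho_{B,K}$ is odd (as $\rho_{B,K}$ is tame with $p=2$), $W(A/K)$ collapses to $W_g$, and lawfulness is equivalent to $W_g = 1$, giving part (i). The main obstacle is the bookkeeping in the ramified case (ii): correctly splitting eigenvalues by order under squaring and confirming that the resulting product of Jacobi symbols is precisely the compact expression $W_g$ appearing in the statement.
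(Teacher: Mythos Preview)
Your argument is correct and follows the same overall strategy as the paper: compute $W(A/L)$ for each quadratic $L/K$ using the root number formula and read off the constraints. The bookkeeping for the eigenvalue orders under $\iota\mapsto\iota^2$ matches the paper's restriction analysis of the $\rho_e$, and your treatment of the $p=2$ wildly ramified case (tame quotients of $I_L$ and $I_K$ coincide, hence $m_e^L=m_e$) makes explicit a step the paper leaves implicit.

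The one genuine difference is in the toric contribution for ramified $L_j/K$. The paper shows, via a somewhat involved character computation, that the ramified quadratic characters of $L_j$ occur in $\rho_{T,L_j}$ with even multiplicity, and then invokes Lemma~\ref{multleg}. You instead observe directly that $(\det\rho_{T,L_j})(-1)=(\det\rho_{T,K})(N_{L_j/K}(-1))=(\det\rho_{T,K})(1)=1$ by functoriality of the Artin map, and then Lemma~\ref{mult} immediately gives the toric factor as $(-1)^{n_1+n_{2+j}}$. This is shorter, and it also sidesteps the need to verify the hypothesis of Lemma~\ref{multleg} over $L_j$. A minor caveat: you frame everything as an application of Theorem~\ref{rootno}, which formally requires tame reduction of the whole $\rho_A$, whereas the hypothesis here only gives $\rho_{B/K}$ tame and $\rho_T(I)$ abelian; since you in fact compute the good part via the $\rho_e$ decomposition and the toric part via Lemma~\ref{mult} separately, the argument goes through, but it would be cleaner to cite those ingredients directly rather than Theorem~\ref{rootno}.
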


\begin{proof}
	Let $F/K$ be a quadratic extension and consider first $\rho_{B/K}$. If $F/K$ is unramified, then $W(\rho_{B/F})=1$ by Remark \ref{qsquare}. We now assume $F/K$ is ramified. Recall that $e$ is the order of the image of inertia of $\rho_e$ and hence the ramification degree of the Galois extension that $\rho_e$ factors through. If $2 \nmid e$, then the image of inertia still has order $e$ and hence $\Res_{F/K} \rho_e$ is still a representation of the form $\rho_e$ and hence still occurs in $\rho_{B/K}$.
	
	Otherwise, $2|e$ and the order of the image of inertia of each irreducible summand is $e/2$ and hence forms a summand of $\rho_{e/2}$. Furthermore, if $4|e$ with $e>4$, then $\tilde{\varphi}(e)=2\tilde{\varphi}(e/2)$ so $\Res_{F/K} \rho_e$ is isomorphic to the direct sum of two representations of the form $\rho_{e/2}$ and we obtain $W(\Res_{F/K} \rho_e) = W(\rho_{e/2})^2=1$. Therefore $W(\rho_{B/F})=W_g$.
	
	We now focus on $W(\rho_{T,F} \otimes \xcyc^{-1} \otimes \sp(2))=W(\rho_{T,F} \otimes \sp(2))$. Recall that the root number only depends on the multiplicities of the trivial and quadratic ramified characters in $\rho_{T,F}$ by Lemma \ref{multleg}.
	
	If $F/K$ is unramified, then $\langle \rho_{T,F}, \mathds{1} \rangle = n_1 +n_2$. Moreover, $\chi_1$ and $\chi_2$ restrict to the same ramified quadratic character $\chi'_1$ of $F$, hence $\langle \rho_{T,F}, \chi'_1 \rangle = n_3+n_4$. Let $\chi'_2$ be the other quadratic character of $F$. We claim $\langle \rho_{T,F}, \chi'_2 \rangle = \langle \rho_{T,K}, \Ind_{F/K} \chi'_2 \rangle$ is even and hence $W(\rho_{T,F} \otimes \sp(2)) = (-1)^{n_1+n_2} \left(\dfrac{-1}{q} \right)^{n_3+n_4}$.
	
	To prove the claim, let $\eta$ be an order $4$ unramified character of $K$. Then $\Res_{F/K} \eta$ is the unramified quadratic character of $F$ and $\chi'_2 = \chi'_1 \otimes \Res_{F/K} \eta$. By the push-pull formula, we have $\Ind_{F/K} \chi'_2 = (\Ind_{F/K} \chi'_1) \otimes \eta = (\chi_1 \otimes \eta) \oplus (\chi_2 \otimes \eta)$. Now $(\chi_2 \otimes \eta)^{-1}=\chi_1 \otimes \eta$ has order $4$ and hence $\langle \rho_{T,F}, \chi'_2 \rangle = \langle \rho_{T,K}, (\chi_1 \otimes \eta) \oplus (\chi_1 \otimes \eta)^{-1} \rangle$ is even since $\rho_{T,K}$ is self-dual; this completes the unramified case.
	
	Now suppose $F/K$ is ramified and recall that $\rho_{T,K}$ contains a ramified quadratic summand if and only if its restriction to inertia also does. There are two such ramified extensions, corresponding to the two ramified characters $\chi_1$ and $\chi_2$. Without loss of generality, we consider the extension $F$ corresponding to $\chi_1$. In this case, $\langle \rho_{T,F}, \mathds{1} \rangle = n_1+n_3$ and the restriction $\Res_{F/K} \chi_2$ is unramified quadratic. We now claim that $\rho_{T,F}$ contains an even number of each quadratic ramified character of $F$ and hence $W(\rho_{T,F} \otimes \sp(2))=(-1)^{n_1+n_3}$. Similarly, we obtain $W(\rho_{T,F} \otimes \sp(2))=(-1)^{n_1+n_4}$ from the other ramified extension.
	
	To see the claim, let $\chi_F$ be a quadratic character of the inertia group of $F$. Then $\chi_F$ is the restriction of an order 4 character $\tilde{\chi}$ of the inertia group of $K$ since $p$ is odd. Observe that $\tilde{\chi} \neq \tilde{\chi}^{-1}$ but $\Res_{F/K} \tilde{\chi} = \Res_{F/K} \tilde{\chi} = \chi_F$, where the restriction is on the corresponding inertia groups. Since $\rho_{T,K}$ is self-dual, $\langle \rho_{T,K}(I), \tilde{\chi} \rangle = \langle \rho_{T,K}(I), \tilde{\chi}^{-1} \rangle$, where we write $\rho_{T,K}(I)$ for the restriction of $\rho_{T,K}$ to the inertia group of $K$. Hence $\langle \Res_{F/K} \rho_{T,K}(I), \chi_F \rangle = \langle \rho_{T,F}(I), \chi_F \rangle$ is even which proves the claim.
	
	We now derive our lawfulness criteria. Recall that $A/K$ is lawful if $W(A/F)=1$ for all quadratic extensions $F/K$. If $p=2$, then by assumption $A/K$ has potentially good reduction so is lawful if and only if $W_g=1$.
	
	Now assume $q$ is odd. If $F/K$ is unramified, then $W(\rho_{B/F})=1$ and $W(\rho_{T,F})=(-1)^{n_1+n_2} \left(\dfrac{-1}{q} \right)^{n_3+n_4}$. If $q \equiv 1 \mod{4}$, then $W(A/F)=1$ if and only if $n_1 \equiv n_2 \mod{2}$, whereas if $q \equiv 3 \mod{4}$, then we must have $n_1+n_2 \equiv n_3+n_4 \mod{2}$.
	
	If $F/K$ is ramified, then we do not know $W_g$ but note that it does not depend on which ramified extension we have and hence if $A/K$ is lawful, then $W(\rho_{T,F} \otimes \sp(2))$ should not depend on the choice of ramified extension either. This implies that $n_3 \equiv n_4 \mod{2}$ and $W_g=(-1)^{n_1+n_3}$.
	
	Collating this information, we find that in both congruence classes for $q$, it is necessary and sufficient to have that $n_1 \equiv n_2 \mod{2}$, $n_3 \equiv n_4 \mod{2}$ and $W_g=(-1)^{n_1+n_3}$ which proves the theorem.
\end{proof}

\begin{remark}
	When $A=E$ is an elliptic curve, then a result of the Dokchitsers \cite[Theorem 1]{DD09} states that $E/K$ is lawful if and only if it attains good reduction over an abelian extension. This is recoverable from our result under our assumptions. Indeed, if $E$ has potentially multiplicative reduction, then $\dim \rho_T =1$ and $\rho_T$ is a character of order at most 2 hence cannot satisfy our congruence condition. Now suppose $E$ has potentially good reduction. Let $e$ be the order of the eigenvalues of $\rho_E(\iota)$ and note that $E$ attains good reduction over an abelian extension if and only if $q \equiv 1 \mod{e}$ \cite[Proposition 2ii]{Roh93}; this is true if and only if $W_g=1$.
\end{remark}

We briefly note one curious necessary condition for an abelian variety to be lawful.

\begin{remark}
	Obesrve that if $\KK$ is a number field and $A/\KK$ is a lawful abelian variety of conductor $\mathfrak{N}$, then $\mathfrak{N}$ is necessarily a square. Indeed, Lemma \ref{inflaw} and Corollary \ref{coprimetwist} implies $\chi(\mathfrak{N})=1$ for every quadratic character $\chi$ of $\KK$ with conductor coprime to $\mathfrak{N}$ which can only happen if $\mathfrak{N}$ is a square as otherwise one could impose finitely many local conditions to construct a counterexample. This observation is reflected in the table in Appendix \ref{lawfultable}.
\end{remark}

\begin{remark}
	Consider the second lawful evil curve\footnote{The minimal equation for this curve is actually $y^2+(x^3+x+1)y=-3x^4+5x^2-5x+1$ but the cluster machinery we use to obtain the root number requires the curve to be in the form $y^2=f(x)$. Its LMFDB label \cite{LMFDB} is 10609.b.10609.1.} in Appendix \ref{lawfultable}, given by the equation $y^2=x^6-10x^4+2x^3+21x^2-18x+5$ whose Jacobian has conductor $103^2$. This is a genuinely new example that does not arise from the Weil restriction of a lawful elliptic curve; to see this we note that its Jacobian is simple using Stoll's criterion \cite[p.1343-1344]{Sto95} at $p=3$.
\end{remark}

\appendix
\section{Jacobi symbols}
\label{Jacobiappendix}
\subsection{Packaging the representation}
	So far we have concentrated on the irreducible summands but we now collate such summands to connect the root numbers of particular types of representations to certain Jacobi symbols depending on $e$.

\begin{lemdef}
\label{rhoe}
	Let $\rho$ be a symplectic, tamely ramified Weil representation such that the characteristic polynomial of $\rho(\iota)$ has coefficients in $\Z$. Suppose $\rho(\iota)$ has an eigenvalue of order $e$. Then there exist irreducible summands $\rho_1, \cdots, \rho_m$ of $\rho$, $m \geqslant 1$, satisfying the following:
	\begin{enumerate}
		\item $\dim \rho_1= \cdots = \dim \rho_m$,
		\item $m\dim \rho_1=\tilde{\varphi}(e)$,
		\item The $\tilde{\varphi}(e)$ eigenvalues of $\{ \rho_j(\iota) : 1 \leqslant j \leqslant m \}$ all have order $e$ and moreover:
		\begin{enumerate}
			\item If $e \leqslant 2$, then there is exactly one eigenvalue (with multiplicity $2$).
			\item If $e \geqslant 3$, then all $\tilde{\varphi}(e)$ eigenvalues are distinct.
		\end{enumerate}
	\end{enumerate}
	
	We define $\rho_e$ to be any representation of the form $\bigoplus\limits_{j=1}^m \rho_j$. Moreover, $\rho$ can be decomposed into summands of the form $\rho_e$.
\end{lemdef}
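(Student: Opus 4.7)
The plan is to first split $\rho$ into irreducible Weil-representation summands and then regroup them by the order $e$ of the $\iota$-eigenvalues they carry, into blocks of size $\tilde\varphi(e)$. By iterated application of Lemma \ref{reduce}, write $\rho = \bigoplus_\pi \pi$ as a direct sum of irreducible Weil representations. Because $\rho$ is tamely ramified with finite image of inertia, each $\pi|_I$ factors through the cyclic group $\langle\iota\rangle/\langle\iota^{e_\pi}\rangle$ for some $e_\pi$, and irreducibility of $\pi$ forces the $\iota$-eigenvalues of $\pi$ to form exactly one orbit under $\chi\mapsto\chi^q$ among the characters of $\langle\iota\rangle$ of order $e_\pi$. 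Consequently $\dim \pi$ equals the orbit size $f_\pi = [K(\zeta_{e_\pi}):K]$, and each element of that orbit appears as an $\iota$-eigenvalue of $\pi$ with multiplicity one.

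For each $e \geqslant 3$ I would collect the irreducible summands with $e_\pi = e$ into a multiset $S_e$; each has dimension $f=[K(\zeta_e):K]$. For every $q$-orbit $O$ of primitive $e$-th roots of unity let $n_O$ count the summands in $S_e$ whose $\iota$-eigenvalue orbit equals $O$. Since the characteristic polynomial of $\rho(\iota)$ lies in $\Z[x]$, it is a product of cyclotomic polynomials and $\Phi_e$ appears with a common multiplicity $m_e$; reading off the multiplicity of each individual primitive $e$-th root then forces $n_O = m_e$ for every orbit $O$. Choosing one summand from each orbit and repeating $m_e$ times partitions $S_e$ into $m_e$ packets, each a direct sum of $\varphi(e)/f$ irreducibles of common dimension $f$ whose combined $\iota$-eigenvalues cover all $\varphi(e) = \tilde\varphi(e)$ primitive $e$-th roots exactly once. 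These packets are the required $\rho_e$ blocks.

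For $e \in \{1,2\}$ the relevant summands are one-dimensional characters $\pi$ with $\pi(\iota)\in\{1,-1\}$, and the symplectic hypothesis enters via Lemma \ref{char}: each such $\pi$ has a distinct companion summand $\tilde\pi \cong \pi^*$, and since $\pi(\iota) = \pi(\iota)^{-1}$ one has $\tilde\pi(\iota) = \pi(\iota)$. Pairing them gives blocks $\pi\oplus\tilde\pi$ each carrying a single $\iota$-eigenvalue of multiplicity two, matching $\tilde\varphi(1) = \tilde\varphi(2) = 2$ and so producing the $\rho_1$'s and $\rho_2$'s. Collecting the packets across all $e$ accounts for every irreducible summand of $\rho$ exactly once and furnishes both the required individual $\rho_e$ (existent whenever $m_e\geqslant 1$) and the global decomposition.

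The main obstacle is the uniformity $n_O = m_e$ for $e\geqslant 3$: it is precisely the integrality of the characteristic polynomial of $\rho(\iota)$, combined with the fact that each irreducible summand's inertia spectrum is a single Galois orbit, that forces the orbits to fill evenly and so allows actual subrepresentations of the shape $\rho_e$ rather than merely an abstract eigenvalue matching. The symplectic hypothesis plays no role for $e\geqslant 3$ but is indispensable in the $e\leqslant 2$ case to guarantee that the self-dual characters arise with even multiplicity via Lemma \ref{char}.
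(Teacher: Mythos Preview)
Your argument is correct and follows essentially the same route as the paper's proof: decompose into irreducibles, observe that each irreducible carries a single $q$-orbit of primitive $e$-th roots on inertia (the paper phrases this as cosets of $\langle q\rangle$ in $(\Z/e\Z)^{\times}$), use integrality of the characteristic polynomial to force uniform multiplicities across orbits for $e\geqslant 3$, and invoke Lemma~\ref{char} and the symplectic pairing for $e\leqslant 2$. Your explicit bookkeeping with the counts $n_O$ is a slight elaboration of what the paper leaves implicit, but there is no substantive difference in method.
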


\begin{proof}
	Observe that for any irreducible summand $\rho'$ of $\rho$, all eigenvalues of $\rho'(\iota)$ neccesarily have the same order (cf. proof of Lemma \ref{reduce}). Since the characteristic polynomial of $\rho(\iota)$ has coefficients in $\Z$, there exist irreducible representations $\rho_1,\cdots, \rho_{m'}$ such that the characteristic polynomial of $\bigoplus\limits_{j=1}^{m'} \rho_j(\iota)$ is the $e$-th cyclotomic polynomial.
	
	
	If $e \leqslant 2$, then $m'=1$ and $\dim \rho_1=1$l; we are done by applying Lemma \ref{char} (hence $m=2$). We may now assume $e \geqslant 3$. In this case we have all $\tilde{\varphi}(e)$ eigenvalues and they are distinct since the cyclotomic polynomial is separable so $m'=m$. To see that the summands have equal dimension, note that this is controlled by the order of $q \bmod{e}$ due to the relation $\Frob \iota \Frob^{-1}=\iota^q$; indeed the sets of eigenvalues of the $\rho_j(\iota)$ correspond to cosets of the subgroup $\langle q \rangle \subset (\Z/e\Z)^{\times}$.
	
	To see that $\rho$ has a decomposition into summands of this form, note that the rationality of the characteristic polynomial forces the correct multiplicities for $e \geqslant 3$ and the symplectic condition does the same for $e=1,2$ by Lemma \ref{char}.
\end{proof}

For the rest of this section, we wish to suppose that our representations $\rho_e$ are themselves symplectic to apply our results to compute their corresponding root numbers. Our first step in this direction is to prove that they are self-dual, after possibly reordering the summands.

\begin{lemma}
	Let $\rho$ be a symplectic, tamely ramified Weil representation such that the characteristic polynomial of $\rho(\iota)$ has coefficients in $\Z$. Then there exists a decomposition of $\rho=\bigoplus \rho_{e_j}$ such that each $\rho_{e_j}$ is self-dual and satisfies Definition \ref{rhoe}.
\end{lemma}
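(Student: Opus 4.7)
Given the preceding Lemma--Definition, we already know that $\rho$ decomposes as a direct sum of pieces of the form $\rho_e$. My plan is to show that we can arrange this decomposition so that each such piece is itself self-dual. I would split the argument by the value of $e$, since the mechanism is different for $e \geqslant 3$ and for $e \leqslant 2$.

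For $e \geqslant 3$, there is in fact no choice to be made. By Definition~\ref{rhoe}(iii)(b), a single $\rho_e$-block contains each of the $\tilde{\varphi}(e)$ primitive $e$-th roots of unity exactly once as an eigenvalue of $\bigoplus_j \rho_j(\iota)$; combined with the Frobenius conjugation relation $\Frob \iota \Frob^{-1} = \iota^q$, this forces $\rho_e$ to contain exactly one irreducible summand $\sigma_C$ for each $\langle q \rangle$-orbit $C$ on the primitive $e$-th roots. Dualising $\sigma_C$ produces the irreducible summand corresponding to the negated orbit $-C$, and the set of all such orbits is invariant under negation. Hence duality permutes the irreducible summands of any $\rho_e$, so $\rho_e \cong \rho_e^*$ automatically.

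For $e \leqslant 2$, each $\rho_e$-block is a sum of two tamely ramified characters $\chi$ of the Weil group with $\chi(\iota)$ prescribed (equal to $1$ if $e=1$ and to $-1$ if $e=2$). The task becomes: partition the multiset of all such characters appearing in $\rho$ into pairs $\{\chi, \chi^*\}$, each of which forms a self-dual $\rho_e$-block. Since $\rho$ is self-dual, every non-self-dual character $\chi$ appears with the same multiplicity as $\chi^*$, and these pair off automatically. The point that needs care is that each self-dual character must be paired with itself, which requires it to occur with even multiplicity in $\rho$.

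The main obstacle, and the only step that is not bookkeeping, is this even-multiplicity claim. The strategy is: any one-dimensional self-dual representation is orthogonal (a one-dimensional space admits no non-zero skew form), and the symplectic hypothesis on $\rho$ then implies that the multiplicity space of any orthogonal irreducible summand is even-dimensional, since the symplectic form on $\rho$ restricts to a non-degenerate skew pairing on it. Once this is in hand, the dual-pair partitioning in the $e \leqslant 2$ case and the automatic self-duality in the $e \geqslant 3$ case combine to produce a decomposition $\rho = \bigoplus \rho_{e_j}$ with each $\rho_{e_j}$ self-dual, proving the lemma.
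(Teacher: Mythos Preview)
The paper states this lemma without proof, so there is no argument of its own to compare against; I assess your proposal directly.

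Your treatment of $e\leqslant 2$ is fine, but the $e\geqslant 3$ step has a genuine gap. The sentence ``dualising $\sigma_C$ produces the irreducible summand corresponding to the negated orbit $-C$'' conflates the \emph{orbit} with the \emph{irreducible}: $\sigma_C^*$ is \emph{some} irreducible whose $\iota$-eigenvalues form the orbit $-C$, but irreducibles with a given inertia orbit are far from unique (any unramified twist has the same orbit), so there is no reason $\sigma_C^*$ should equal the particular $\sigma_{-C}$ sitting inside the chosen block. When $-1\notin\langle q\rangle\subset(\Z/e\Z)^\times$ this is merely a presentational slip---one may \emph{choose} $\sigma_{-C}:=\sigma_C^*$ for one orbit in each pair $\{C,-C\}$, and self-duality of $\rho$ guarantees this summand is available. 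But when $-1\in\langle q\rangle$ every orbit satisfies $C=-C$, and a self-dual block would force each individual $\sigma_C$ to be self-dual. Nothing in the hypotheses ensures this. Concretely, take $e=5$, $q\equiv 2\bmod 5$ (so $f=4$, $m=1$, and a $\rho_5$-block is a single irreducible), let $\theta$ be an irreducible self-dual representation of Galois type with $|\theta(I)|=5$, let $\nu$ be unramified of order~$3$, and set $\rho=(\theta\otimes\nu)\oplus(\theta\otimes\nu)^*$. Then $\rho$ is symplectic and tamely ramified with $\rho(\iota)$ having characteristic polynomial $\Phi_5(x)^2\in\Z[x]$, yet its only decomposition into $\rho_5$-blocks consists of the two irreducibles $\theta\otimes\nu$ and $\theta\otimes\nu^{-1}$, neither of which is self-dual. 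So the gap in your argument is not repairable: the statement as written appears to need either an extra hypothesis or a weaker conclusion, and what Theorem~\ref{rhoesym} actually uses is better extracted directly from Lemma~\ref{semisymp} applied to the whole of $\rho$ rather than block by block.
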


\begin{lemma}
\label{FSlemma}
	Let $\theta$ be an irreducible, self-dual, tamely ramified Artin representation of $\Gal(K^{\rm{sep}}/K)$ with $\dim \theta \geqslant 2$ and let $\nu$ be any unramified character of order $2\dim \theta$. If $\theta$ is orthogonal (resp. symplectic), then $\theta \otimes \nu$ is symplectic (resp. orthogonal) and moreover $W(\theta \otimes \nu)=-W(\theta)$.
\end{lemma}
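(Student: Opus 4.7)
The plan is to first verify that $\theta \otimes \nu$ is self-dual with the opposite Frobenius--Schur indicator, and then compute $W(\theta \otimes \nu)/W(\theta)$ directly via the tensor product formula for $\varepsilon$-factors.

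By Lemma~\ref{induce}, after reducing to the case where $\theta$ is faithful on inertia, I would write $\theta = \Ind_H^G(\chi \otimes \gamma)$ with $H = \langle \iota, \Frob^f \rangle$ and $f = \dim \theta \geqslant 2$; self-duality forces $q^{f/2} \equiv -1 \bmod{e}$ and $\gamma^2 = \mathds{1}$. The projection formula gives $\theta \otimes \nu \cong \Ind_H^G(\chi \otimes \gamma \otimes \nu|_H)$, and since $\nu$ is unramified of order $2f$, the restriction $\nu|_H$ is the character of $H$ that is trivial on $\iota$ and sends $\Frob^f$ to $\nu(\Frob)^f = -1$. Thus $\gamma \otimes \nu|_H$ is still a quadratic character of $\langle \Frob^f \rangle$, so Lemma~\ref{induce}(iv) keeps $\theta \otimes \nu$ self-dual, while Lemma~\ref{induce}(v) flips the indicator since $\gamma \otimes \nu|_H = \mathds{1}$ precisely when $\gamma \neq \mathds{1}$.

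For the root number identity, I would apply Theorem~\ref{props}(iii) with $\rho = \theta$ and the unramified character $\rho' = \nu$, obtaining
\[
\varepsilon(\theta \otimes \nu, \psi, dx) = \varepsilon(\theta, \psi, dx) \cdot \nu(\pi_K^{a(\theta) + n(\psi) f}).
\]
Lemma~\ref{induce}(vi) gives $a(\theta) = f$, and the paper's standing choice $\ker \psi = \mathcal{O}_K$ forces $n(\psi) = 0$, so the twisting factor reduces to $\nu(\pi_K)^f = -1$ because $\nu(\pi_K)$ has exact order $2f$. Hence the two $\varepsilon$-factors differ by a sign, and dividing through by the (equal) absolute values yields $W(\theta \otimes \nu) = -W(\theta)$.

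The main thing to recognise---rather than a genuine obstacle---is that the unramified twist $\nu$ of order $2f$ restricts on $\langle \Frob^f \rangle$ to precisely the auxiliary quadratic character appearing in Proposition~\ref{Sab0}, and that this same character produces the sign in the conductor-exponent computation; in this way both halves of the lemma are governed by a single underlying fact and reduce to elementary bookkeeping once Lemma~\ref{induce} is in hand.
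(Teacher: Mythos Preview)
Your argument is correct, and the root-number computation via Theorem~\ref{props}(iii) is exactly what the paper does. The treatment of the Frobenius--Schur indicator, however, is genuinely different from the paper's. You use the push-pull formula to rewrite $\theta\otimes\nu$ as $\Ind_H^G(\chi\otimes\gamma')$ with $\gamma'=\gamma\otimes\nu|_H$, observe that $\nu|_H(\Frob^f)=-1$ flips $\gamma$ between trivial and non-trivial, and then read off the indicator directly from Lemma~\ref{induce}(v). The paper instead proves $S(\theta\oplus(\theta\otimes\nu))=0$ by an averaging trick: it sums the Frobenius--Schur indicators of $\theta\otimes\nu^r$ over all $r$, recognises this as $S(\theta\otimes\theta_{\mathrm{reg}})$ for the inflated regular representation of the unramified quotient, and uses that $\theta_{\mathrm{reg}}$ kills all traces outside the abelian subgroup $\langle\iota,\Frob^f\rangle$. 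Your route is shorter and more transparent, essentially because Lemma~\ref{induce}(v) has already packaged the indicator in terms of $\gamma$; the paper's averaging argument is more self-contained but somewhat roundabout given that classification is already available. One small point worth making explicit in your write-up: you should take $G$ to have residue degree $2f$ (as the paper does) so that $\nu$ genuinely factors through $G$; this is harmless but is needed for the push-pull step to make sense when $\theta$ itself factors through a smaller quotient.
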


\begin{proof}
	Let $f=\dim \theta$, $e=|\theta(I)|>1.$ Then by Lemma \ref{induce}, $\theta$ factors through a Galois extension $L/K$ where $\Gal(L/K)= \langle \iota, \Frob | \iota^e, \Frob^{2f}, \Frob \iota \Frob^{-1} = \iota^q \rangle,$ and $q^f \equiv 1 \bmod{e}$. Now let $\nu'$ be any unramified character for $\Gal(L/K)$. Let $H=\langle \iota, \Frob^f \rangle$ and recall that $\theta=\Ind_{L^H/K} \chi \otimes \gamma$ is monomial and hence by the push-pull formula, we have $\theta \otimes \nu' = \Ind_{L^H/K} (\chi \otimes \gamma \otimes \Res_{L^H/K} \nu')$. 
	
	Let $\nu$ be a primitive unramified character of $\Gal(L/K)$ so $\nu$ has order $2f$ and note that $\Res_{L^H/K} \nu^r = \mathds{1}$ if and only if $r$ is even. Moreover, as $\nu$ is primitive, $\theta \otimes \nu \not\cong \theta$.
	
	We distinguish between orthogonal and symplectic representations via the Frobenius--Schur indicator: if $\pi$ is an irreducible self-dual representation of a finite group $G$ then $S(\pi):=\frac{1}{|G|} \sum\limits_{g \in G} \Tr \pi(g^2)$ is such that $S(\pi)=1$ if $\pi$ is orthogonal and $S(\pi)=-1$ if $\pi$ is symplectic. We show that $S((\theta \otimes \nu) \oplus \theta)=0$ from which the result follows.
	
	Since $\theta \otimes \nu^r$ only depends on $r \bmod{2}$ (as $\theta \otimes \nu^2 \cong \theta$), we have that $$fS((\theta \otimes \nu) \oplus \theta)= S \left(\theta \otimes \left( \bigoplus\limits_{r=1}^{2f} \nu^r \right) \right),$$ so we work with the right hand side since $\bigoplus\limits_{r=1}^{2f} \nu^r= \theta_{reg}$ is the inflation of the regular representation on $\Gal(L^I/K)$.
	
	Now $\Tr \theta_{reg} (\Frob^k \iota^l)= \Tr \theta_{reg} (\Frob^k) = 0$ unless $k=0$ so the only group elements that contribute are in the abelian subgroup $\langle \Frob^f, \iota \rangle$, where $\theta_{reg}$ acts as the identity.
	
	We now compute $$\frac{|\Gal(L/K)|}{\dim \theta_{reg}} S(\theta \otimes \theta_{reg}) = 2\sum\limits_{l=1}^e \Tr \theta (\iota^{2l});$$ the calculation of $S(\theta \otimes \theta_{reg})$ now follows.

	Finally observe that by Theorem \ref{props}iii, $$W(\theta \otimes \nu)=W(\theta)\nu(\Frob)^{a(\theta)} = W(\theta)\nu(\Frob)^{\dim \theta}=-W(\theta)$$ since $\nu$ is an unramified character of order $2\dim \theta$.
\end{proof}

Before we prove that we may suppose that the $\rho_e$ are symplectic, we need a lemma concerning the structure of semisimple symplectic representations.

\begin{lemma}
\label{semisymp}
	Let $\rho$ be a semisimple symplectic representation of a group $G$. Then there exists irreducible symplectic representations $\lambda_1, \cdots \lambda_t$ of $G$ and a representation $\pi$ of $G$ such that $$\rho \cong \pi \oplus \pi^* \oplus \lambda_1 \oplus \cdots \oplus \lambda_t.$$
\end{lemma}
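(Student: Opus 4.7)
The plan is to decompose $\rho$ into isotypic components with respect to its irreducible constituents, then use the $G$-invariant symplectic form together with Schur's lemma to control the multiplicities according to the Frobenius--Schur type of each constituent.

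First I would write $\rho \cong \bigoplus_i m_i \sigma_i$ with $\sigma_i$ pairwise non-isomorphic irreducibles and $m_i \geqslant 1$. Fix a non-degenerate $G$-invariant skew-symmetric form $\omega$ on the representation space. Since the space of $G$-equivariant bilinear pairings between $\sigma_i$ and $\sigma_j$ is zero unless $\sigma_j \cong \sigma_i^*$, the form $\omega$ restricts non-degenerately to $m_i \sigma_i \oplus m_j \sigma_j$ whenever $\sigma_j \cong \sigma_i^*$ with $\sigma_i \not\cong \sigma_i^*$, forcing $m_i = m_j$ in that case. These non-self-dual irreducibles therefore contribute a summand of the form $\pi_1 \oplus \pi_1^*$ to $\rho$.

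For each self-dual $\sigma_i$ the restriction of $\omega$ to the isotypic component $m_i \sigma_i$ must itself be non-degenerate and $G$-invariant. By Schur, the space of $G$-invariant bilinear forms on $\sigma_i$ is one-dimensional, spanned by a form $b_i$ which is symmetric if $\sigma_i$ is orthogonal and skew if $\sigma_i$ is symplectic. Identifying $m_i \sigma_i$ with $\C^{m_i} \otimes \sigma_i$, every $G$-invariant bilinear form on this space has the shape $A \otimes b_i$ for some $m_i \times m_i$ matrix $A$, and $\omega$ being skew-symmetric forces $A$ to be skew when $b_i$ is symmetric, and symmetric when $b_i$ is skew. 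Non-degeneracy of $\omega$ then requires $A$ invertible. The key observation is that an invertible skew-symmetric matrix exists only in even size, so when $\sigma_i$ is orthogonal the multiplicity $m_i$ must be even; no parity constraint arises in the symplectic case.

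Finally I would package the data. The orthogonal self-dual constituents, occurring with even multiplicity $2k_i$, split as $k_i(\sigma_i \oplus \sigma_i) = k_i(\sigma_i \oplus \sigma_i^*)$, contributing another summand of the form $\pi_2 \oplus \pi_2^*$. Setting $\pi = \pi_1 \oplus \pi_2$ and letting $\lambda_1, \ldots, \lambda_t$ (with multiplicities) be the symplectic self-dual irreducible constituents, we obtain the claimed decomposition $\rho \cong \pi \oplus \pi^* \oplus \lambda_1 \oplus \cdots \oplus \lambda_t$. The main point requiring care is the Frobenius--Schur parity argument in the previous paragraph; everything else is bookkeeping.
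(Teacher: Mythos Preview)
Your argument is correct and is the standard proof of this fact: pair off non-self-dual constituents using non-degeneracy, then use the Frobenius--Schur type together with the $A \otimes b_i$ description of invariant forms on an isotypic block to force orthogonal constituents to occur with even multiplicity. The paper itself does not prove this lemma; it simply cites \cite[Lemma A.2]{Sab07}, where essentially the same argument appears. So there is nothing to compare---you have supplied the proof that the paper outsources.
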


\begin{proof}
	See \cite[Lemma A.2]{Sab07}.
\end{proof}

\begin{remark}
	Unfortunately, we cannot simply reorganise the summands between the different $\rho_e$ to show they are symplectic; indeed suppose $\rho_e$ is self-dual but not symplectic and consider the symplectic representation $\rho= \rho_e^{\oplus 2}$. This does not have a symplectic decomposition as desired due to our assumption of $\rho_e$ on inertia.
\end{remark}

We are finally in a position to show that the $\rho_e$ may be assumed to be symplectic, without affecting the root number of the overarching symplectic representation. This theorem follows from the previous two lemmas.

\begin{theorem}
\label{rhoesym}
	Let $\rho_e$ be a tamely ramified Weil representation as in Definition \ref{rhoe}. Write $$\rho_e \cong \pi \oplus \pi^* \oplus \bigoplus_{j=1}^m \theta_j,$$ such that $\theta_j$ are irreducible self-dual summands. Let $$\rho_e'=\pi \oplus \pi^* \oplus \bigoplus_{j=1}^m \theta_j \otimes \nu_j,$$ where $\nu_j$ are unramified characters of finite order such that $\theta_j \otimes \nu_j$ is symplectic for all $j$. Then $\rho_e'$ satisfies Definition \ref{rhoe} and $W(\rho_e')=(-1)^aW(\rho_e)$, where $a$ is the number of summands $\theta_j$ which are orthogonal.

Moreover, if $\rho$ is a symplectic tamely ramified Weil representation such that we have a decomposition $\rho \cong \bigoplus \rho_{e_j}$ into summands satisfying Definition \ref{rhoe}, then $W(\rho)=\prod W(\rho_{e_j}')$.
\end{theorem}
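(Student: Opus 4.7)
The plan is to prove the single-$\rho_e$ identity first and then deduce the ``moreover'' statement via a parity argument on the orthogonal multiplicities inside $\rho$.

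The first step is to verify $\rho_e'$ still satisfies Definition~\ref{rhoe}. Since each $\nu_j$ is unramified, $\nu_j|_I=\mathds{1}$, so $\rho_e'(\iota)=\rho_e(\iota)$ as operators, and all conditions in Definition~\ref{rhoe} (which concern only the action on $\iota$) transfer verbatim. For the root number identity itself, additivity of $\varepsilon$-factors (Theorem~\ref{props}(i)) yields
\[
W(\rho_e) = W(\pi \oplus \pi^*)\prod_{j=1}^m W(\theta_j), \qquad W(\rho_e')=W(\pi\oplus\pi^*)\prod_{j=1}^m W(\theta_j\otimes\nu_j),
\]
reducing the task to comparing $W(\theta_j\otimes\nu_j)$ with $W(\theta_j)$ summand by summand. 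In the relevant decomposition every $\theta_j$ has $\dim\theta_j\geqslant 2$: the eigenvalue constraints of Definition~\ref{rhoe} rule out self-dual $1$-dimensional summands for $e\geqslant 3$, while for $e\leqslant 2$ Lemma~\ref{char} allows any such summand to be absorbed into the $\pi\oplus\pi^*$ piece. Thus Lemma~\ref{FSlemma} applies: if $\theta_j$ is orthogonal, the hypothesis that $\theta_j\otimes\nu_j$ be symplectic forces $\nu_j$ to be unramified of order $2\dim\theta_j$ and the lemma delivers $W(\theta_j\otimes\nu_j)=-W(\theta_j)$; if $\theta_j$ is already symplectic, the choice $\nu_j=\mathds{1}$ preserves $W$. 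Multiplying over $j$ gives $W(\rho_e')=(-1)^a W(\rho_e)$.

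For the ``moreover'' part, multiplicativity together with the single-$\rho_e$ identity produces
\[
\prod_j W(\rho_{e_j}') = (-1)^{\sum_j a_j}\prod_j W(\rho_{e_j}) = (-1)^{\sum_j a_j} W(\rho),
\]
so everything reduces to verifying that $\sum_j a_j$ is even. The plan for this is to apply Lemma~\ref{semisymp} to the symplectic $\rho$: writing $\rho\cong\sigma\oplus\sigma^*\oplus\bigoplus_i\lambda_i$ with each $\lambda_i$ irreducible symplectic, any orthogonal self-dual irreducible $\theta$ can appear in $\rho$ only inside $\sigma\oplus\sigma^*$, hence with even total multiplicity. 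Writing $\rho_{e_j}=\pi_j\oplus\pi_j^*\oplus\bigoplus_k\theta_{j,k}$, the multiplicity of any such $\theta$ inside every $\pi_j\oplus\pi_j^*$ is automatically even, so the combined multiplicity of $\theta$ across the $\theta_{j,k}$'s (summed over $j$) is also even. Summing over the finitely many orthogonal self-dual irreducibles appearing in $\rho$ forces $\sum_j a_j\equiv 0\pmod 2$.

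The main obstacle is precisely this final parity argument: each individual $\rho_{e_j}$ is only self-dual (not symplectic), so an individual $a_j$ may well be odd, and the conclusion depends crucially on the global constraint that $\rho$ be symplectic to force the orthogonal irreducibles to have even multiplicity in $\rho$ and hence make the grand total $\sum_j a_j$ even.
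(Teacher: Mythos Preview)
Your argument is correct and follows the paper's own approach exactly: the paper's proof is the one-line ``This theorem follows from the previous two lemmas,'' referring to Lemma~\ref{FSlemma} and Lemma~\ref{semisymp}, and you have simply written out the details of how those two lemmas combine. One small imprecision: in the orthogonal case the hypothesis that $\theta_j\otimes\nu_j$ be symplectic does not force $\nu_j$ to have order exactly $2\dim\theta_j$, only that $\nu_j(\Frob)^f=-1$ (equivalently, order dividing $2f$ but not $f$); however all such $\nu_j$ give isomorphic twists $\theta_j\otimes\nu_j$, so Lemma~\ref{FSlemma} still delivers the root-number flip, and similarly in the symplectic case any admissible $\nu_j$ satisfies $\theta_j\otimes\nu_j\cong\theta_j$ rather than merely allowing the choice $\nu_j=\mathds{1}$.
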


\begin{remark}
\label{rmkrhoe}
	If $B/K$ is an abelian variety with potentially good reduction, then $\rho_B \otimes \xcyc^{1/2}$ satisfies the conditions of Definition \ref{rhoe}, hence $\rho_B \otimes \xcyc^{1/2}$ decomposes into self-dual summands $\rho_{e_j}$ (cf. Definition \ref{rhoe}). By Corollary \ref{dualrep} and Theorem \ref{rhoesym}, we therefore have $$W(\rho_B)=W(\rho_B \otimes \xcyc^{1/2})=\prod\limits_{j=1}^k W(\rho_{e_j}'),$$ where $\rho_{e_j}'$ is the symplectic version of $\rho_{e_j}$ as in Theorem \ref{rhoesym}. Moreover, by applying the same ideas, we may suppose that if one irreducible summand $\theta$ (and hence all summands) of $\rho_e$ is self-dual, then $\theta$ is symplectic.
\end{remark}

All that remains is to explicitly describe $W(\rho_{e_j})$ when $\rho_{e_j}$ is symplectic.

\subsection{Jacobi symbols}

With a symplectic representation of the form $\rho_e$ fixed (cf. Definition \ref{rhoe}), we shall allow the prime to vary and study the effect on the root number of $\rho_e$. By Remark \ref{rmkrhoe}, we may assume that all irreducible self-dual summands are also symplectic and hence only consider that setting when applying Theorem \ref{selfdual}.

Under this symplectic assumption on an irreducible representation $\theta$, we further note that if $e$ is odd then $W(\theta)=-1$ independently of whether $q$ is odd or even; coupling this with Remark \ref{dualodde}, we see that we no longer need to distinguish between $q$ being odd or even in the setting of potentially good reduction.

Note that since we have only assumed tame inertia, $q$ is necessarily coprime to $e$ hence it is reasonable to expect a Jacobi symbol involving $q$ and $e$. Our first step in this direction is the following lemma.

\begin{lemma}
	Let $l$ be an odd prime and $e=l^k$. Then $W(\rho_e)=\left( \dfrac{q}{l} \right)$.
\end{lemma}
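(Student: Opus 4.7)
The plan is to decompose $\rho_e$ into its irreducible summands, read off each contribution from the results already established, and then match the combined sign with $\left(\dfrac{q}{l}\right)$. By Remark \ref{rmkrhoe} we may assume every self-dual constituent of $\rho_e$ is symplectic. The irreducible constituents of $\rho_e$ are parametrised by the cosets of $\langle q \rangle$ in $(\Z/e\Z)^{\times}$; each has dimension $f = [K(\zeta_e):K]$, and there are $\varphi(e)/f$ of them. Since $(\Z/l^k\Z)^{\times}$ is cyclic and $-1$ is its unique element of order $2$, the self-duality criterion ``$-1 \in \langle q \rangle$'' of Lemma \ref{induce}iv is equivalent to $f$ being even, giving a clean dichotomy.

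Next, I would compute $W(\rho_e)$ in each case. If $f$ is odd, the summands pair up as $\sigma \oplus \sigma^*$, and Remark \ref{dualodde} (applied with $e$ odd) yields $W(\sigma \oplus \sigma^*) = 1$, so $W(\rho_e) = 1$. If $f$ is even, every summand is self-dual and symplectic, and Theorem \ref{selfdual} forces root number $-1$ for each: one has $v_2(e) = 0$, while $q$ odd gives $v_2(q^{f/2}+1) \geq 1 > 0$ so the condition $v_2(q^{f/2}+1) = v_2(e)$ fails, and $q$ even produces $-1$ directly. Hence $W(\rho_e) = (-1)^{\varphi(e)/f}$ in the even case.

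The main obstacle is the parity comparison between $(-1)^{\varphi(e)/f}$ (or, when $f$ is odd, the trivial sign) and $\left(\tfrac{q}{l}\right)$. The natural tool is a discrete logarithm: writing $q \equiv g^{a} \pmod{l^k}$ for a generator $g$ of $(\Z/l^k\Z)^{\times}$, one has $\left(\tfrac{q}{l}\right) = (-1)^{a}$ and $\varphi(l^k)/f = d := \gcd(a, \varphi(l^k))$. Setting $t := v_2(\varphi(l^k)) = v_2(l-1) \geq 1$, the identity $v_2(d) = \min(v_2(a), t)$ does all the work: $f$ odd forces $v_2(d) = t$, hence $v_2(a) \geq t \geq 1$ and $\left(\tfrac{q}{l}\right) = 1$; while $f$ even forces $v_2(a) < t$, so $d$ and $a$ share the same parity and $(-1)^{d} = (-1)^{a}$. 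That $l$ is odd enters precisely through $t \geq 1$.
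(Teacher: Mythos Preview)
Your proof is correct and follows essentially the same approach as the paper: decompose $\rho_e$ into its irreducible summands, compute each contribution using Theorem \ref{selfdual} (self-dual case) and Remark \ref{dualodde} (dual-pair case), and match the result with $\left(\tfrac{q}{l}\right)$ via a discrete logarithm. The only organisational difference is that the paper splits cases on the parity of the discrete log $a$ (which immediately gives the Legendre symbol and leaves the root number to be checked), whereas you split on the parity of $f$ (which immediately gives the root number and leaves the Legendre symbol to be checked via your $v_2$ comparison); both routes are equally valid and use the same ingredients.
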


\begin{proof}
	Let $\xi$ be a generator of the cyclic group $(\Z/e\Z)^{\times}$ and write $q \equiv \xi^n \mod{e}$ with $n$ minimal. Note that the dimension of an irreducible summand is equal to the order of $q^n$, and there are $\gcd(n,\tilde{\varphi}(e))$ such summands.
		
	If $n$ is odd, then there are necessarily an odd number of irreducible summands which must all be self-dual. Note that if $e$ is odd, then the root number attached to each summand is necessarily negative and hence $W(\rho_e)=-1$.
		
	Now assume $n$ is even and observe that there are an even number of irreducible summands. If they are self-dual then their product must equal $1$ so we are done. Otherwise we have dual pairs whose root number must be trivial by Theorem \ref{dual} as $e$ is odd.
\end{proof}
	
We now consider the case $l=2$ to obtain a similar result. We note that cases $e=2,4$ have been covered by Rohrlich and since these give separate Legendre symbols, we shall omit the proofs here.
	
\begin{lemma}
	Let $e=2^n$ for $n \geqslant 3$. Then $W(\rho_e)=-1$ if and only if $q$ has order $2^{n-2} \mod{e}$ if and only if $\left( \dfrac{2}{q} \right)= -1$.	
\end{lemma}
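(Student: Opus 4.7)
I will prove the chain of equivalences by exploiting the structure $(\Z/2^n\Z)^{\times} \cong \langle -1\rangle \times \langle 5\rangle$ with $\langle 5\rangle$ cyclic of order $2^{n-2}$. Writing $q = (-1)^a 5^b$, the maximal order $2^{n-2}$ is attained precisely when $b$ is odd, equivalently $q \equiv \pm 5 \pmod{8}$, equivalently $\bigl(\tfrac{2}{q}\bigr) = -1$ by the supplementary law for the quadratic character of $2$. This handles the second equivalence.

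For the equivalence $W(\rho_e) = -1 \Leftrightarrow \bigl(\tfrac{2}{q}\bigr) = -1$, let $f$ denote the order of $q$ modulo $e$, so $f \mid 2^{n-2}$. By Lemma~\ref{rhoe}, $\rho_e$ is a direct sum of $2^{n-1}/f$ irreducible summands of dimension $f$; each is self-dual iff $q^{f/2} \equiv -1 \pmod{e}$ by Lemma~\ref{induce}(iv), in which case we may assume them symplectic by Remark~\ref{rmkrhoe}, otherwise they pair into dual pairs $\sigma \oplus \sigma^*$. The root number of each individual summand, or of each dual pair, depends only on the triple $(q, e, f)$ via Theorems~\ref{selfdual} and \ref{dual}, so all such contributions within $\rho_e$ are equal.

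The case analysis is on $f$. If $f < 2^{n-2}$, the number of summands, respectively the number of dual pairs, is a power of $2$ that is at least $2$, so the equal $\pm 1$ factors multiply to $+1$ and $W(\rho_e) = +1$. If $f = 2^{n-2}$, the identity $5^{2^{k}} \equiv 1 + 2^{k+2} \pmod{2^{k+3}}$ (by induction on $k$) yields $q^{f/2} \equiv 1 + 2^{n-1} \not\equiv -1 \pmod{2^n}$ when $b$ is odd, so the two summands form a dual pair. A $2$-adic lifting-the-exponent calculation then gives $v_2\bigl((q^f - 1)/(q - 1)\bigr) = v_2(q + 1) + n - 3$; combined with $v_2(q+1) \in \{1, 2\}$ for $q \equiv \pm 5 \pmod 8$, the relevant modulus in Theorem~\ref{dual} becomes $8$ or $4$ respectively, and in both subcases $q \not\equiv 1$ modulo that quantity, yielding $W(\rho_e) = -1$.

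The principal technical obstacle will be the $2$-adic computations in the maximal case $f = 2^{n-2}$: establishing the explicit congruence for $q^{f/2}$ via the induction above, and then applying Theorem~\ref{dual} with the resulting valuation. Once these are in place, the cancellation argument for $f < 2^{n-2}$ is immediate, and the chain of equivalences assembles.
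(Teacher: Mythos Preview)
Your overall strategy matches the paper's, but there is a genuine gap at $n=3$. Your claim that the maximal order $2^{n-2}$ is attained \emph{precisely} when $b$ is odd fails there: the element $q\equiv -1\pmod 8$ has $(a,b)=(1,0)$ yet order $2=2^{n-2}$. (In fact the lemma's middle equivalence is itself off at this single residue; what is actually used downstream is the outer equivalence $W(\rho_e)=-1\Leftrightarrow\bigl(\tfrac{2}{q}\bigr)=-1$.) This break propagates into your case analysis: when $f=2^{n-2}$ you invoke ``$b$ odd'' to conclude the two summands form a dual pair, but for $n=3$, $q\equiv -1\pmod 8$ one has $f=2$ with $q^{f/2}=q\equiv -1$, so the two summands are self-dual, not a dual pair. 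The paper avoids this by first disposing of the self-dual case $q\equiv -1\pmod e$ separately (there are $2^{n-2}\geqslant 2$ equal $\pm 1$ factors, hence product $+1$) and only afterwards restricting to dual pairs; your argument needs the same preliminary step.

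A smaller inaccuracy: the congruence $q^{f/2}\equiv 1+2^{n-1}\pmod{2^n}$ presumes the factor $(-1)^{a\cdot 2^{n-3}}$ vanishes, which requires $n\geqslant 4$. For $n=3$ and $q\equiv 3\pmod 8$ one has $q^{f/2}=q\equiv 2^{n-1}-1$ instead. This does not affect the conclusion $q^{f/2}\not\equiv -1$, and the paper's proof accordingly writes $q^{f/2}\equiv 2^{n-1}\pm 1$; your LTE computation thereafter is correct and is a clean alternative to the paper's direct squaring.
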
	
		
\begin{proof}
	By our hypotheses, $(\Z/e\Z)^{\times} \cong \Z/2\Z \times \Z/2^{n-2}\Z$. As $-1$ is not a square in $\Z_2$, the only self-dual case that occurs is when $q \equiv -1 \! \mod{e}$. In this case, we have $2^{n-2} \geqslant 2$ self-dual summands hence the root number is positive.
		
	We are now reduced to considering dual pairs. Let $f$ be the dimension of an irreducible summand, which divides $2^{n-2}$. If $f$ is not maximal, then we would have an even multiplicity of dual pairs so the root number is again $1$, so we may suppose $f=2^{n-2}$.
		
	Now there are 3 non-trivial square roots of $1$ and since the representation is not self-dual, we necessarily have $p^{f/2} \equiv 2^{n-1} \pm 1 \mod{2^n}$. Squaring this, we observe that $v_2(q^f-1)=n$.
		
	Now	$v_2\left(\gcd\left(2^n,\dfrac{q^f-1}{q-1}\right)\right) = n-v_2(q-1)$,	so the congruence condition we obtain from Lemma \ref{dualmod} is therefore $q \equiv 1 \mod{2^{v_2(q-1)}}$. Hence it will always produce a negative root number. The Jacobi symbol then follows by a direct computation.
\end{proof}

\begin{lemma}
	Let $l$ be an odd prime such that $l \equiv 3 \mod{4}$ and let $e=2l^k$. Then $W(\rho_e)=\left( \dfrac{-1}{q} \right)$.
\end{lemma}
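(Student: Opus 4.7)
The plan is to follow the template used for the preceding lemmas ($e=l^k$ odd, $e=2^n$): decompose $\rho_e$ according to whether its irreducible summands are self-dual (invoke Theorem \ref{selfdual}) or arise in dual pairs (invoke Theorem \ref{dual} and Corollary \ref{dualrep}), and then count summands modulo $2$. The key arithmetic input from the hypothesis $l \equiv 3 \pmod 4$ is that $v_2(\tilde{\varphi}(e)) = v_2\bigl((l-1)l^{k-1}\bigr) = 1$, so the cyclic group $(\mathbb{Z}/e\mathbb{Z})^\times$ has a 2-part of order exactly $2$. This will both force the Jacobi symbol $\bigl(\tfrac{-1}{q}\bigr)$ to appear uniformly on every summand, and ensure that the multiplicity with which it appears is odd.

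In the first subcase, suppose $f = \operatorname{ord}_e(q)$ is even. Since $(\mathbb{Z}/e\mathbb{Z})^\times$ is cyclic, $\langle q \rangle$ contains the unique element of order $2$, namely $-1$, so every irreducible summand of $\rho_e$ is self-dual and (by Remark \ref{rmkrhoe}) may be taken symplectic. Applying the symplectic case of Theorem \ref{selfdual}, each summand $\theta$ satisfies $W(\theta) = 1$ iff $v_2(q^{f/2}+1) = v_2(e) = 1$. Since $f \mid \tilde{\varphi}(e)$ and $v_2(\tilde{\varphi}(e)) = 1$, we get $v_2(f) = 1$, so $f/2$ is odd and thus $q^{f/2} \equiv q \pmod 4$; it follows that $W(\theta) = \bigl(\tfrac{-1}{q}\bigr)$. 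The number of summands is $m = \tilde{\varphi}(e)/f$ with $v_2(m) = 1-1 = 0$, so $m$ is odd and $W(\rho_e) = \bigl(\tfrac{-1}{q}\bigr)^m = \bigl(\tfrac{-1}{q}\bigr)$.

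In the second subcase, $f$ is odd and no summand is self-dual, so $\rho_e$ splits into $m/2$ dual pairs, each contributing $(\det\sigma_x)(-1)$ by Corollary \ref{dualrep}. Setting $N = (q^f-1)/(q-1)$, the character $\det\sigma_x$ has order $r_x = e/\gcd(e, xN)$ on $\mathcal{O}_K^\times$. The essential computation is to show $v_2(r_x) = 1$: $N$ is a sum of $f$ odd terms with $f$ odd, hence $N$ is odd, and $x$ is coprime to $e = 2l^k$, hence $x$ is odd. So $\gcd(e, xN) = \gcd(l^k, N)$ is odd, forcing $v_2(r_x) = 1$. By Lemma \ref{2reduce}, $(\det\sigma_x)(-1) = 1$ iff $q \equiv 1 \pmod{2r_x}$; since $r_x \mid q-1$ automatically, this reduces to $v_2(q-1) > 1$, i.e.\ $q \equiv 1 \pmod 4$, so each pair contributes $\bigl(\tfrac{-1}{q}\bigr)$. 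Finally $v_2(m/2) = v_2(\tilde{\varphi}(e)) - 1 - v_2(f) = 0$, so the number of pairs is odd and $W(\rho_e) = \bigl(\tfrac{-1}{q}\bigr)^{m/2} = \bigl(\tfrac{-1}{q}\bigr)$.

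The main obstacle lies in the dual-pair subcase: one must show the $2$-part of the order of $\det\sigma_x$ is independent of the orbit representative $x$ and equal to exactly $1$. This rests on the parity arithmetic ($x$, $N$, and $\gcd(l^k,N)$ all odd), which is ultimately powered by the hypothesis $l \equiv 3 \pmod 4$ forcing the small $2$-part in $\tilde{\varphi}(e)$; the counting of summands afterwards is then an immediate consequence.
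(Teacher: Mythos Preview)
Your proof is correct and follows essentially the same approach as the paper: split on the parity of $f$, invoke Theorem \ref{selfdual} in the self-dual case and Theorem \ref{dual} / Corollary \ref{dualrep} in the dual-pair case, and use $v_2(\tilde{\varphi}(e))=1$ to force an odd number of contributing summands. The paper's version is terser---it records $v_2(q^{f/2}+1)=v_2(q+1)$ and $v_2(q^f-1)=v_2(q-1)$ rather than reducing mod $4$, and it does not track the orbit parameter $x$ explicitly---but the substance is identical.
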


\begin{proof}
	Let $f$ be the order of $q \bmod{e}$. Then an irreducible summand is self-dual if and only if $f$ is even; this implies we either have an odd number of dual pairs or an odd number of self-dual summands and hence $W(\rho_e)$ is the same as the root number of a dual pair or irreducible self-dual summand.
	
	First suppose $f$ is odd. Then $v_2(q-1)=v_2(q^f-1)$ and we find that by Theorem \ref{dual}, $W(\rho_e)=1 \Leftrightarrow v_2(q-1) \geqslant 2$, which agrees with the Jacobi symbol. Otherwise $f \equiv 2 \bmod{4}$ and we compute that $v_2(q^{f/2}+1)=v_2(q+1)$; the result now follows by Theorem \ref{selfdual}.
\end{proof}

We now cover the remaining cases and show that the root number is always positive.

\begin{lemma}
	\begin{enumerate}
		\item[]
		\item Let $e$ be an integer which is neither a prime power nor twice a prime power. Then $W(\rho_e)=1$ for all $q$.
		\item Let $l$ be an odd prime such that $l \equiv 1 \mod{4}$ and let $e=2l^k$. Then $W(\rho_e)=1$ for all $q$.
	\end{enumerate}
\end{lemma}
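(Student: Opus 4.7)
Both statements rest on the same dichotomy: the irreducible summands of $\rho_e$ are either all self-dual or all come in dual pairs, according to whether $-1 \in \langle q \rangle \subseteq (\Z/e\Z)^{\times}$, equivalently whether $q^{f/2} \equiv -1 \bmod e$, where $f$ denotes the order of $q$ modulo $e$. Under the symplectic convention of Remark \ref{rmkrhoe}, this reduces the computation to
\begin{equation*}
W(\rho_e)=W(\theta)^{\tilde{\varphi}(e)/f} \text{ (self-dual)}, \qquad W(\rho_e)=W(\sigma \oplus \sigma^{*})^{\tilde{\varphi}(e)/(2f)} \text{ (dual pair)}.
\end{equation*}
My plan in each case is first to show that the exponent is even, which immediately gives $W(\rho_e)=1$, and only when it is odd to fall back on evaluating $W$ on a single summand via Theorem \ref{selfdual} or Theorem \ref{dual}.

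For part (ii), set $N=\tilde{\varphi}(e)=(l-1)l^{k-1}$, so $v_2(N)\geq 2$ since $l \equiv 1 \bmod 4$. If $f$ is odd then $v_2(N/(2f))=v_2(N)-1\geq 1$ and the dual-pair multiplicity is even. If $f$ is even the summands are self-dual and Theorem \ref{selfdual} demands $v_2(q^{f/2}+1)=v_2(e)=1$; I would verify this by cases on $q \bmod 4$. When $q\equiv 1 \bmod 4$ the congruence $q^{f/2}\equiv 1 \bmod 4$ is immediate. When $q\equiv 3 \bmod 4$, an odd multiplicity $N/f$ forces $v_2(f)=v_2(N)\geq 2$ and hence $f/2$ even, so $q^{f/2}\equiv 1 \bmod 4$ again; an even multiplicity kills the value of $W(\theta)$. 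Either way $W(\rho_e)=1$.

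For part (i), the hypothesis that $e$ is neither a prime power nor twice a prime power makes $(\Z/e\Z)^{\times}$ non-cyclic and in particular forces $v_2(\tilde{\varphi}(e))\geq 2$. When $e$ is odd with $r\geq 2$ distinct prime factors $p_1, \ldots, p_r$, Remark \ref{dualodde} handles the dual-pair case; in the self-dual case, the Chinese remainder decomposition $(\Z/e\Z)^{\times} \cong \prod_i (\Z/p_i^{a_i}\Z)^{\times}$ combined with $q^{f/2}\equiv -1 \bmod p_i^{a_i}$ in each factor gives $v_2(f)\leq v_2(p_i-1)$ for every $i$; summing yields $v_2(\tilde{\varphi}(e))>v_2(f)$, so $\tilde{\varphi}(e)/f$ is even. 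When $e$ is even, $q$ is automatically odd and the analogous CRT argument, now including the $(\Z/2^{a_0}\Z)^{\times}$-factor, handles the self-dual case; the dual-pair case with even multiplicity is trivial. For the dual-pair case with odd multiplicity I would compute $v_2((q^f-1)/(q-1))$ via Lifting the Exponent, splitting into $f$ odd, $f$ even with $q\equiv 1 \bmod 4$, and $f$ even with $q\equiv 3 \bmod 4$, and verify the Theorem \ref{dual} congruence $q\equiv 1 \bmod 2e/\gcd(e,(q^f-1)/(q-1))$ in every subcase.

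The main obstacle will be this last $2$-adic verification. The identity $(q^f-1)/(q-1)\cdot(q-1)\equiv 0 \bmod e$ already yields $q\equiv 1 \bmod e/\gcd(e,(q^f-1)/(q-1))$; the real work is producing the extra factor of $2$ demanded by the $2e$ in Theorem \ref{dual}. Tracking $v_2(q\pm 1)$, $v_2(f)$, $v_2(\tilde{\varphi}(e))$ and $v_2(e)$ simultaneously across the subcases, the excluded prime-power shape of $e$ should always furnish just enough slack in $v_2(\tilde{\varphi}(e))-v_2(f)$ to close the argument.
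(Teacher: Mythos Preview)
Your plan for part (ii) is correct and essentially coincides with the paper's: both observe $v_2(\tilde\varphi(e))\geq 2$, eliminate the dual-pair case by parity, and in the self-dual case force $4\mid f$ so that $q^{f/2}\equiv 1\bmod 4$ and Theorem~\ref{selfdual} gives $W(\theta)=1$.

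For part (i) your strategy is sound but more laborious than the paper's. The paper replaces your CRT/$v_2$ bookkeeping by a single structural observation: since $e$ is neither a prime power nor twice one, $(\Z/2\Z)^2\leq(\Z/e\Z)^\times$. As $\langle q\rangle$ is cyclic, its index is then even, disposing of the self-dual case in one line for \emph{all} such $e$ (odd or even) without splitting on $a_0$. For dual pairs the paper pushes further: if $(\Z/2\Z)^3\leq(\Z/e\Z)^\times$ then $\langle q,-1\rangle$, having $2$-rank at most $2$, again has even index and the number of dual pairs is even. This reduces the residual verification to $e$ with $2$-torsion exactly $(\Z/2\Z)^2$ and $e$ even (odd $e$ being handled by Remark~\ref{dualodde}), which the paper treats as the single family $e=4l^k$ and checks directly against Theorem~\ref{dual}. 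Your proposed LTE case-split on $f$ odd/even and $q\bmod 4$ would get there too, but across several subfamilies rather than one.

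One point where you should tighten your write-up: for even $e$ in the self-dual case, ``the analogous CRT argument'' needs the extra input that $q^{f/2}\equiv -1\bmod 2^{a_0}$ forces $v_2(f)=1$ when $a_0\geq 2$ (since $-1$ is not a square in $(\Z/2^{a_0}\Z)^\times$ for $a_0\geq 2$, and lies outside $\langle 5\rangle$ for $a_0\geq 3$); this is what makes $v_2(\tilde\varphi(e))>v_2(f)$ go through. Otherwise your approach is viable, just longer.
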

	
\begin{proof}
	(i): Observe that $(\Z/2\Z)^2 \leqslant (\Z/e\Z)^{\times}$ and hence in the self-dual case there are necessarily an even number of summands so we may reduce to dual pairs. Similarly, if $(\Z/2\Z)^3 \leqslant (\Z/e\Z)^{\times}$ then there are an even number of dual pairs so we are done. This reduces us to the case where $e=4l^k$ for some odd prime $l$.
	
	Let $f$ be the order of $q \bmod{e}$ and note that $v_2(f)<v_2(|(\Z/e\Z)^{\times}|) =1+v_2(l-1)$ since $(\Z/2\Z)^2 \leqslant (\Z/e\Z)^{\times}$. If $v_2(f) \neq v_2(l-1)$, then we again must have an even number of dual pairs and hence we may assume that $v_2(f)=v_2(l-1) \geqslant 1$.
	
	By Theorem \ref{dual}, we have $$W(\sigma \oplus \sigma^*)=1 \Leftrightarrow  v_2(q-1) \geqslant 3- \min \{ 2, v_2(q^{f-1}+q^{f-2}+ \cdots + 1) \}; $$ it is now a routine verification to check that the right hand side always holds.
	
	(ii): First note that $-1$ is a square $\!\!\! \mod{e}$ and that an irreducible summand is self-dual if and only if $f$, the order of $q \bmod{e}$, is even (and we have $q^{f/2} \equiv -1 \bmod{e}$). If it is not self-dual or $v_2(f) \neq v_2(\tilde{\varphi}(e))$, then we have an even number of dual pairs or self-dual summands so are done. 
	
	Hence $4|f$ and Theorem \ref{selfdual} states that we have a positive root number if and only if $v_2(q^{f/2}+1)=1$; this is true for all odd $q$.
\end{proof}

\begin{theorem}
\label{goodrootpf}
Let $e$ be a positive integer and let $\rho_e$ be a symplectic Weil representation as defined in Definition \ref{rhoe}. Let $q$ be the cardinality of the residue field of $K$. Then 
	
$W(\rho_e) = \begin{cases}
	\left( \dfrac{q}{l} \right) &\text{ if } e=l^k \quad \text{ for some odd prime $l$ and integer $k\geqslant 1$}; \\[10pt]	
	\left( \dfrac{-1}{q} \right) &\text{ if } e=2l^k \quad \text{ for some prime } l \equiv 3 \bmod{4}, k\geqslant 1,  \qquad \text{ or } e=2; \\[10pt]	
	\left( \dfrac{-2}{q} \right) &\text{ if } e=4; \\[10pt]	
	\left( \dfrac{2}{q} \right) &\text{ if } e=2^k \quad \text{ for some integer } k \geqslant 3; \\[10pt]	
	\quad 1 &\text{ else.}
\end{cases}$
\end{theorem}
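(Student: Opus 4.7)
The plan is to prove the formula by case analysis on the arithmetic structure of $e$, relying on the decomposition $\rho_e = \bigoplus_j \sigma_j$ into irreducible summands provided by Lemma-Definition \ref{rhoe} and Theorem \ref{rhoesym}. Each $\sigma_j$ has dimension $f$ equal to the multiplicative order of $q$ modulo $e$, so there are $\tilde{\varphi}(e)/f$ summands in total. By Remark \ref{rmkrhoe} I may assume that every self-dual summand is symplectic, so that Theorem \ref{selfdual} applies directly, while the non-self-dual summands occur in pairs $\sigma \oplus \sigma^*$ whose root number is computed by Theorem \ref{dual}.

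The key dichotomy is that the $\sigma_j$ are self-dual precisely when $-1 \in \langle q \rangle \subset (\Z/e\Z)^{\times}$, equivalently $q^{f/2} \equiv -1 \bmod{e}$. Whenever the total count of dual pairs (resp.\ self-dual pieces) is even, the individual root numbers multiply to $1$, so the main task is to track when this count is odd and then evaluate the resulting Jacobi symbol, using the Chinese Remainder Theorem structure of $(\Z/e\Z)^{\times}$ to control both the self-duality condition and the relevant multiplicities.

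For $e = l^k$ with $l$ an odd prime, the group $(\Z/e\Z)^{\times}$ is cyclic, so all summands are either jointly self-dual or jointly in dual pairs, and a direct computation using Theorems \ref{dual} and \ref{selfdual} yields $\left(\tfrac{q}{l}\right)$. For $e = 2^n$ with $n \geq 3$, the only self-dual configuration forces $q \equiv -1 \bmod{e}$ and gives a positive contribution, while in the dual-pair case only the maximal dimension $f = 2^{n-2}$ contributes nontrivially, producing $\left(\tfrac{2}{q}\right)$ after a $2$-adic valuation computation; the exceptional cases $e = 2$ and $e = 4$ are treated separately, the latter yielding $\left(\tfrac{-2}{q}\right)$. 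For $e = 2l^k$ with $l \equiv 3 \bmod{4}$, splitting on whether $f$ is odd or $f \equiv 2 \bmod{4}$ and applying Theorems \ref{dual} and \ref{selfdual} extracts $\left(\tfrac{-1}{q}\right)$ in each subcase. For the remaining $e$ (namely $e = 2l^k$ with $l \equiv 1 \bmod{4}$, $e$ divisible by two distinct odd primes, $e = 4l^k$, etc.), one shows that either $(\Z/2\Z)^r$ embeds in $(\Z/e\Z)^{\times}$ for $r$ large enough to force an even number of summands, or the congruence conditions of Theorems \ref{dual} and \ref{selfdual} hold automatically, in both situations giving $1$.

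The main obstacle is this final cleanup case, where one must juggle $v_2(f)$, $v_2(\tilde{\varphi}(e))$, $v_2(q-1)$ and $v_2(q^{f/2}+1)$ simultaneously and verify that across all subcases the criteria from Theorems \ref{dual} and \ref{selfdual} always collapse to $1$. This is a lengthy $2$-adic bookkeeping exercise rather than a deep conceptual obstruction, but it requires care to cover the subcases based on whether $4 \mid e$, the residue of $l$ modulo $4$ when relevant, and the minimal $r$ for which $(\Z/2\Z)^r$ embeds in $(\Z/e\Z)^{\times}$.
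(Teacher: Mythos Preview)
Your proposal is correct and follows essentially the same approach as the paper. The paper's proof of Theorem~\ref{goodrootpf} is simply the amalgamation of four preceding lemmas treating exactly the cases you outline ($e=l^k$, $e=2^n$ with $n\geqslant 3$, $e=2l^k$ with $l\equiv 3\bmod 4$, and the remaining cases), and your sketch of each case---the cyclic structure for odd prime powers, the $q\equiv -1\bmod e$ self-dual subcase for $2$-powers, the parity split on $f$ for $e=2l^k$, and the $(\Z/2\Z)^r$-embedding count for the cleanup---matches the paper's arguments in both strategy and detail.
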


\begin{proof}
	This is the amalgamation of the previous four lemmas in this section.
\end{proof}

\newpage
\section{Table of lawful genus two hyperelliptic curves}
\label{lawfultable}
	Below we give a table of lawful genus two hyperelliptic curves over $\Q$, ordered by conductor up to $50,000$. The list of curves used as our input data was obtained from \cite{BSSVY16,LMFDB}. As root numbers are invariant under isogeny, we only list curves with non-isogenous Jacobians. \\
	
\begin{tabular}{l|c|c}
	\multicolumn{1}{c|}{$f(x)$}	& Conductor & Lawful good/ evil \\ \hline
	\rule{0pt}{2.5ex}$x^6+4x^5+6x^4+2x^3+x^2+2x+1$ & $169$ & Good \\
	$x^6-4x^5+2x^4+2x^3+x^2+2x+1$ & $529$ & Good \\
	$x^6+2x^5+7x^4+6x^3+13x^2+4x+8$ & $841$ & Good \\
	$x^6+4x^5+6x^4+6x^3+x^2-2x-3$ & $961$ & Good \\
	$x^6-2x^4+6x^3+13x^2+6x+1$ & $3721$ & Good \\
	$x^6+4x^5+2x^4+2x^3+x^2-2x+1$ & $4489$ & Good \\
	$-3x^6+4x^5-2x^4+2x^3+x^2-2x+1$ & $4489$ & Evil \\
	$-3x^6+2x^5+29x^4-6x^3-82x^2+4x+73$ & $4489$ & Good \\
	$x^6+2x^5+x^4+6x^3+2x^2-4x+1$ & $5329$ & Good \\
	$-3x^6-32x^5-62x^4+102x^3-159x^2+126x-31$ & $5329$ & Good \\
	$4x^5+5x^4+6x^3-3x^2-8x-4$ & $5929$ & Good \\
	$x^6-12x^5+38x^4-26x^3-7x^2+6x+1$ & $8281$ & Good \\
	$4x^5+33x^4+46x^3+13x^2-4x$ & $8281$ & Good \\
	$x^6+2x^5+9x^4+10x^3+26x^2+12x+25$ & $9409$ & Good \\
	$x^6+2x^4+2x^3+5x^2+6x+1$ & $10609$ & Good \\
	$x^6-10x^4+2x^3+21x^2-18x+5$ & $10609$ & Evil \\
	$x^6+2x^5+5x^4+2x^3-2x^2-4x-3$ & $11449$ & Good \\
	$4x^5-11x^4+2x^3+9x^2-4x$ & $11881$ & Good \\
	$-3x^6-4x^5+30x^4+30x^3-111x^2-50x+137$ & $17689$ & Good \\
	$4x^5-15x^4+10x^3+5x^2-4x$ & $17689$ & Good \\
	$x^6-10x^4-10x^3+5x^2+6x+1$ & $17689$ & Good \\
	$x^6+8x^5+10x^4+6x^3+5x^2+2x+1$ & $17689$ & Good \\
	$x^6+2x^5+9x^4+2x^3-6x^2-28x+21$ & $17689$ & Good \\
	$4x^5+17x^4+14x^3-3x^2-4x$ & $24649$ & Good \\
	$x^6+4x^5+2x^4+2x^3-3x^2-2x-3$ & $27889$ & Good \\
	$x^6-8x^4-8x^3+8x^2+12x-8$ & $28561$ & Evil \\
	$x^6+4x^5+2x^4+2x^3+41x^2+78x+41$ & $32761$ & Good \\
	$x^6+2x^4+2x^3+5x^2-6x+1$ & $36481$ & Good \\
	$x^6+2x^4-14x^3+5x^2+6x+1$ & $37249$ & Good \\
	$x^6+2x^5+3x^4+4x^3+7x^2+14x+13$ & $43681$ & Good \\
	$-3x^6+2x^5+21x^4-18x^3-30x^2+16x+17$ & $44521$ & Good \\
	$x^6+4x^5+2x^4+6x^3+x^2-2x+1$ & $48841$ & Good \\
	$-3x^6+8x^5-18x^4+26x^3-23x^2+10x-3$ & $48841$ & Good \\
	$x^6+4x^5-4x^4-22x^3+8x^2+8x-71$ & $49729$ & Good
\end{tabular}

\begin{remark}
	Note that for conductors $4489$ and $10609$, we get examples of both good and evil lawful Jacobians; in the former case the lawful good Jacobians also have distinct analytic ranks.
	
	On the other hand, we have only lawful good Jacobians for conductors $5329, 8281, 17689$ and $48841$. Moreover, every isogeny class of Jacobians of conductor $5329, 17689$ and $48841$ listed in \cite{BSSVY16,LMFDB} is lawful.
\end{remark}

\bibliographystyle{alpha}
\bibliography{Everything}

\newcommand{\etalchar}[1]{$^{#1}$}
\begin{thebibliography}{DDMM18}

\bibitem[AD17]{AD17}
S.~Anni and V.~Dokchitser.
\newblock {Constructing hyperelliptic curves with surjective Galois
  representations}.
\newblock {\em arXiv: 1701.05915}, 2017.

\bibitem[BDD16]{BDD16}
S.~Bettin, C.~David, and C.~Delauney.
\newblock {Families of elliptic curves with non-zero average root number}.
\newblock {\em arXiv: 1612.03095}, 2016.

\bibitem[BKS18]{BKS18}
A.~Brumer, K.~Kramer, and M.~Sabitova.
\newblock {Explicit determination of root numbers of abelian varieties}.
\newblock {\em Trans. Amer. Math. Soc.}, 370:2589--2604, 2018.

\bibitem[BSS{\etalchar{+}}16]{BSSVY16}
A.~Booker, J.~Sijsling, A.~Sutherland, J.~Voight, and D.~Yasaki.
\newblock {A database of genus 2 curves over the rational numbers}.
\newblock {\em arXiv:1602.03715}, 2016.

\bibitem[DD09]{DD09}
T.~Dokchitser and V.~Dokchitser.
\newblock {Elliptic curves with all quadratic twists of positive rank}.
\newblock {\em Acta Arithmetica}, 137:193--197, 2009.

\bibitem[DD10]{DD10}
T.~Dokchitser and V.~Dokchitser.
\newblock {On the Birch--Swinnerton-Dyer quotient modulo squares}.
\newblock {\em Ann. Math.}, 172(1):567--596, 2010.

\bibitem[DD11]{DD11}
T.~Dokchitser and V.~Dokchitser.
\newblock {Root numbers and parity of ranks of elliptic curves}.
\newblock {\em Crelle}, 2011(658):39--64, 2011.

\bibitem[DDMM18]{DDMM}
T.~Dokchitser, V.~Dokchitser, C.~Maistret, and A.~Morgan.
\newblock {Arithmetic of hyperelliptic curves over local fields}.
\newblock {\em arXiv: 1808.02936}, 2018.

\bibitem[Del73]{Deligne1973}
P.~Deligne.
\newblock {Formes modulaires et repr{\'{e}}sentations de GL(2)}.
\newblock In {\em Modular functions of one variable II}, pages 55----105.
  Springer-Verlag, 1973.

\bibitem[Des16]{Des16}
J.~Desjardins.
\newblock {On the variation of the root number in families of elliptic curves}.
\newblock {\em arXiv: 1610.07440}, 2016.

\bibitem[FQ73]{FQ73}
A.~Fr{\"{o}}hlich and J.~Queyrut.
\newblock {On the functional equation of the Artin L-function for characters of
  real representations.}
\newblock {\em Invent. Math.}, 20:125--138, 1973.

\bibitem[Gal65]{Gal65}
P.~Gallagher.
\newblock {Determinants of representations of finite groups}.
\newblock {\em Abhandlungen aus dem Mathematischen Seminar der
  Universit{\"{a}}t Hamburg}, 28(3-4):162--167, 1965.

\bibitem[Gro72]{Gro72}
A.~Grothendieck.
\newblock {Mod{\`{e}}les de N{\'{e}}ron et monodromie}.
\newblock In {\em Groupes de Monodromie en G{\'{e}}om{\'{e}}trie
  Alg{\'{e}}brique, SGA7 I, Lecture Notes in Mathematics 288}, pages 313--523.
  Springer, 1972.

\bibitem[Hel04]{Hel04}
H.~Helfgott.
\newblock {On the behaviour of root numbers in families of elliptic curves}.
\newblock {\em arXiv: 0408141}, 2004.

\bibitem[{LMF}17]{LMFDB}
The {LMFDB Collaboration}.
\newblock {The L-functions and modular forms database}.
\newblock {\em http://www.lmfdb.org}, 2017.

\bibitem[Mor15]{Mor15}
A.~Morgan.
\newblock {2-Selmer parity for hyperelliptic curves in quadratic extensions}.
\newblock {\em arXiv: 1504.01960}, 2015.

\bibitem[MR10]{MR10}
B.~Mazur and K.~Rubin.
\newblock {Ranks of twists of elliptic curves and Hilbert's tenth problem}.
\newblock {\em Invent. Math.}, 181(3):541--575, 2010.

\bibitem[Roh90]{Roh90}
D.~Rohrlich.
\newblock {The vanishing of certain Rankin-Selberg convolutions}.
\newblock In {\em Automorphic Forms and Analytic Number Theory}, pages
  123--133. Univ. Montr{\'{e}}al, Montr{\'{e}}al, QC, 1990.

\bibitem[Roh93]{Roh93}
D.~Rohrlich.
\newblock {Variation of the root number in families of elliptic curves}.
\newblock {\em Compositio Mathematica}, 2(2):119--151, 1993.

\bibitem[Roh94]{Roh94}
D.~Rohrlich.
\newblock {Elliptic curves and the Weil--Deligne group}.
\newblock In {\em Elliptic Curves and Related Topics, CRM Proc. {\&} Lect.
  Notes, Vol. 4}, pages 125--157. American Mathematical Society, 1994.

\bibitem[Roh96]{Roh96}
D.~Rohrlich.
\newblock {Galois theory, elliptic curves, and root numbers}.
\newblock {\em Compositio Mathematica}, 100(3):311--349, 1996.

\bibitem[Sab07]{Sab07}
M.~Sabitova.
\newblock {Root numbers of abelian varieties}.
\newblock {\em Trans. Amer. Math. Soc.}, 359:4259--4284, 2007.

\bibitem[Sab13]{Sab13}
M.~Sabitova.
\newblock {Twisted root numbers and ranks of abelian varieties}.
\newblock {\em Journal of Combinatorics and Number Theory}, V(5):25--30, 2013.

\bibitem[Ser77]{Ser77}
J.~Serre.
\newblock {\em {Linear representations of finite groups}}.
\newblock Springer Science {\&} Business Media, 1977.

\bibitem[ST68]{ST68}
J.~Serre and J.~Tate.
\newblock {Good reduction of abelian varieties}.
\newblock {\em The Annals of Mathematics}, 88(3):492--517, 1968.

\bibitem[Sto95]{Sto95}
M.~Stoll.
\newblock {Two simple 2-dimensional abelian varieties defined over Q with
  Mordell-Weil group of rank at least 19}.
\newblock {\em C. R. Acad. Sci. Paris, S{\'{e}}rie I}, 321(10):1341--1345,
  1995.

\bibitem[Tat79]{Tat79}
J.~Tate.
\newblock {Number theoretic background}.
\newblock In {\em Automorphic Forms, Representations and L-Functions, Proc.
  Symp. Pure Math Vol 33 - Part 2}, pages 3--26. American Mathematical Society,
  1979.

\bibitem[VA11]{VA11}
A.~V{\'{a}}rilly-Alvarado.
\newblock {Density of rational points on isotrivial rational surfaces}.
\newblock {\em Algebra and Number Theory}, 5(5):659--690, 2011.

\bibitem[vB17]{vB17}
R.~van Bommel.
\newblock {Numerical verification of the Birch and Swinnerton-Dyer conjecture
  for hyperelliptic curves of higher genus over Q up to squares}.
\newblock {\em arXiv:1711.10409}, 2017.

\end{thebibliography}

\end{document}